\titleformat{\subsubsection}[runin]
{\normalfont\normalsize\bfseries}{\thesubsubsection}{1em}{}
\numberwithin{equation}{section}
\newcommand{\ip}[1] {\left\langle #1 \right\rangle }
\newcommand{\norm}[1]{\left \| #1 \right \|}
\newcommand{\inclu}[0] {\ar@{^{(}->}}
\newcommand{\RR}{\mathbb{R}}
\newcommand{\abs}[1]{\left| #1 \right|}
\newcommand{\dom}{\mathrm{dom}\:}
\newcommand{\epi}{\mathrm{epi}\:}
\newcommand{\interior}{\mathrm{int}\:}
\newcommand{\bdry}{\mathrm{bdry}\:}
\newcommand{\argmin}{\operatornamewithlimits{argmin}}
\newcommand{\argmax}{\operatornamewithlimits{argmax}}
\newtheorem{thm}{Theorem}[section]
\newtheorem{theorem}{Theorem}[section]
\newtheorem{definition}[thm]{Definition}
\newtheorem{proposition}[thm]{Proposition}
\newtheorem{lemma}[thm]{Lemma}
\crefname{claim}{claim}{claims}
\Crefname{claim}{Claim}{Claims}
\crefname{lem}{lemma}{lemmas}
\Crefname{lem}{Lemma}{Lemmas}
\crefname{algorithm}{algorithm}{algorithms}
\Crefname{algorithm}{Algorithm}{Algorithms}
\newtheorem{remark}{Remark}
\theoremstyle{remark}
\newcommand{\varspace}{\mathcal{E}}
\newcommand{\defeq}{:=}
\newcommand{\dist}[1]{\mathrm{dist}\left(#1\right)}
\newcommand{\onedist}[2]{\mathrm{d}\left(#1; #2\right)}
\newcommand{\hrzn}[1]{#1^{\infty}}
\newcommand{\bgInner}[1]{S^{\mathrm{in}}_{#1}}
\newcommand{\fBgInner}[1]{F^{\mathrm{in}}_{#1}}
\newcommand{\smOuter}[1]{s^{\mathrm{out}}_{#1}}
\newcommand{\bgOuter}[1]{S^{\mathrm{out}}_{#1}}
\newcommand{\fBgOuter}[1]{F^{\mathrm{out}}_{#1}}
\newcommand{\sIn}[2]{{#1}^{\mathrm{in}}_{#2}}
\newcommand{\sGen}[2]{{#1}^{\mathrm{gen}}_{#2}}
\newcommand{\sOut}[2]{{#1}^{\mathrm{out}}_{#2}}
\newcommand{\price}[1]{\varrho_{#1}}
\newcommand{\core}[1]{C_{#1}}
\newcommand{\cGap}[1]{\mathrm{w}_{#1}}
\newcommand{\cCenter}[1]{x_{#1}}
\newcommand{\scaleOp}[2]{\left[#1\right]_{#2}}
\newcommand{\normE}[1]{\norm{#1}_{\varspace}}
\newcommand{\set}[1]{\left\{ #1 \right\}}
\newcommand{\eps}{\varepsilon}
\DeclareMathOperator{\epiAdd}{\square}
\newcommand{\infConv}[2]{#1 \epiAdd #2}
\begin{document}

	\title{The Optimal Smoothings of Sublinear Functions and Convex Cones}

	 \author{Thabo Samakhoana\footnote{Johns Hopkins University, Department of Applied Mathematics and Statistics, \url{tsamakh1@jhu.edu}} \qquad Benjamin Grimmer\footnote{Johns Hopkins University, Department of Applied Mathematics and Statistics, \url{grimmer@jhu.edu}}}

	\date{}
	\maketitle

	\begin{abstract}
            This paper considers the problem of smoothing convex functions and sets, seeking the nearest smooth convex function or set to a given one. For convex cones and sublinear functions, a full characterization of the set of all optimal smoothings is given. These provide if and only if characterizations of the set of optimal smoothings for any target level of smoothness. Optimal smoothings restricting to either inner or outer approximations also follow from our theory. Finally, we apply our theory to provide insights into smoothing amenable functions given by compositions with sublinear functions and generic convex sets by expressing them as conic sections.
	\end{abstract}

    \section{Introduction}
The smoothing of nonsmooth functions is a well-established tool in nonsmooth analysis and nonsmooth optimization. Typically, smooth functions exist arbitrarily close to a given nonsmooth but continuous function. Then, a key tradeoff is balancing how smooth of a function one seeks versus how close it ought to be to the original. This smoothing task has found widespread usage:
\begin{itemize}[noitemsep]
    \item Moreau envelopes~\cite{Moreau1965} provide a smoothing by taking an infimal convolution with a quadratic function. These have played key roles in the design and analysis of proximal and subgradient first-order methods~\cite{Beck2009FISTA,Davis2019}. 

    \item Moreau-type smoothings have been considered in several nonconvex settings: Penot and Bougeard~\cite{penot1988approximation} provide infimal smoothings of functions expressed as a difference of two convex functions, for use in optimization. Lasry and Lions~\cite{lasry1986remark} provided smoothings for bounded uniformly continuous functions, for use in studying Hamilton-Jacobi equations. These smoothings were later generalized to a wider class of functions by~\cite{attouch1993approximation}.
    
    \item Particularly friendly smoothings arise for so-called asymptotic functions~\cite{ben-talsmoothing1989}. For example, one can replace minimizing a finite maximum $\max_i G_i(x)$ with the minimizing the smooth function $\eta \log(\sum_{i}\exp(G_i(x)/\eta))$ for some $\eta>0$. Then, accelerated gradient methods can be applied, leading to accelerated guarantees, see \cite{Beck2012}. Such approaches have been competitive in applications like max flows~\cite{Sherman2013}.
    \item Gaussian convolutional smoothing~\cite{NesterovSpokoinyZerothOrder} provides a classic method of inducing smoothness by introducing noise and has found much success in zeroth-order methods, see~\cite{lei2024subdifferentiallypolynomiallyboundedfunctions} and the many references therein.
    \item Conditions closely related to the existence of nearby smooth functions have been used as a novel ``fine-grained'' measure of the complexity of nonsmooth optimization problems~\cite{diakonikolas2024optimizationfinerscalebounded}.
\end{itemize}
Smoothing techniques have also been applied to sets, but not as widely:
\begin{itemize}
    \item Finding the smallest volume ellipsoid enclosing a finite set of points \cite{John1948, Khachiyan1996, NesterovRounding} provides a smoothing of their convex hull. %, where the convex hull of the points is the set being approximated by the ellipsoid.
    More recently, the set interpolation theory of Luner and Grimmer~\cite{luner2024performanceestimationsmoothstrongly} provides a minimal set (not necessarily an ellipsoid) of fixed smoothness that encloses the same convex hull as the solution to a second-order cone program. 
    \item Smooth approximations of convex bodies in Banach spaces were considered in \cite{deville1998analytic}.
    \item Xu et al.~\cite{xu2025smoothingmovingballsapproximation} recently proposed an algorithm for minimization under conic constraints, indirectly smoothing the underlying cone. Their smoothing is done by smoothing the support function of a compact base of the dual cone and taking level sets of the smooth function.

\end{itemize}

In most of the existing literature, the choice of the smoothing is done in an ad hoc manner. For example, one may choose the Moreau envelope because it is smooth and it preserves minimizers. Note, however, the Moreau envelope does not generally minimize distance to the original function among all equally smooth functions. Similarly, the above-referenced max function smoothing of $z \mapsto \log(\sum_{i=1}^n \exp(z_i))$ is a $1$-smooth and convex over-estimator of the convex function $z \mapsto \max\set{z_1, \dots, z_n}$, but is not the nearest over-estimator with these properties. 

\paragraph{Our Contributions.} The ad hoc nature of existing theory raises the following natural question:
$$
\textbf{How should one optimally smooth convex functions and sets?}
$$
In this paper, \textit{we resolve this question for all sublinear functions and convex cones}, characterizing the whole Pareto frontier of smooth functions and sets optimally trading off smoothness and distance from the given function or set. Our theory at every step puts both function and set smoothing on equal footing, highlighting fundamental symmetries between these two settings. 

As an application of our function smoothing theory, we obtain well-motivated smoothings for a subclass of amenable \cite[Definition 10.23]{rockafellar2009} functions that take the form $g = \sigma \circ G$ where $\sigma$ is a sublinear function and $G$ is a Lipschitz mapping with a Lipschitz continuous Jacobian. This class includes many nonsmooth functions that arise naturally in optimization; for example, minimizing finite maximums $\max\{G_1(x), \dots, G_n(x)\}$, maximum eigenvalue optimization $\lambda_{\max} (G(x))$, and nonlinear regression with an arbitrary norm $\|G(x)\|$. %max-affine functions $f(x) = \max\{a_1^Tx + b_1, \dots, a_m^Tx + b_m\}$, maximum eigenvalue functions, and arbitrary norm functions.

As a concrete example, consider the nonsmooth minimization problem
$$\min_{x\in\RR^d}\max\{G_1(x), \dots, G_n(x)\} = \min_{x\in\RR^d}\sigma(G(x))$$
where $\sigma(z) = \max\set{z_1, \dots, z_n}$ and $G(x) = (G_1(x), \dots, G_n(x))$. Beck and Teboulle~\cite{Beck2012} proposed solving this problem by replacing $\sigma\circ G$ with the smooth approximation $f_{\eta} \circ G(x) = \eta\log\left(\sum_i \exp(G_i(x)/\eta)\right)$ and then applying an accelerated gradient method to this smoothed problem. With an appropriate choice of $\eta$, this approach yields an $\eps$-minimizer of $\sigma\circ G$ within $O(\sqrt{(L\varepsilon + \log(n)M^2)}/\varepsilon)$ iterations, whenever each $G_i$ is convex and $G$ is $M$-Lipschitz with $L$-Lipschitz Jacobian. Our theory provides an optimal smoothing $\sGen{f}{\sigma}$ (see Equation~\eqref{eq: maxFuncOptSmoothing}) of $\sigma=\max$. With appropriately chosen $\eta$, minimizing our optimized smoothing $\eta\sGen{f}{\sigma}(G(x)/\eta)$ yields an $\varepsilon$-minimizer of $\sigma\circ G$ within $O(\sqrt{L\varepsilon+ M^2}/\varepsilon)$ iterations. Asymptotically, this is an improvement by a factor of $1/\sqrt{\log(n)}$\footnote{We note that under different assumptions, the log-sum-exp approach may provide stronger guarantees since the function $\log(\sum_i \exp(z_i))$ is also $1$-smooth under the $l_{\infty}$ norm. This nuance is discussed further in Section~\ref{subsec:amenable}.}.

As an application of our cone smoothings, we show in Theorem~\ref{thm: affineInSmoothableConeDescriptionOfConvexSets} that a generic convex set can be decomposed into an intersection of an affine space and a cone that is amenable to our theory. By replacing the cone with its optimal smoothings, we obtain smooth approximations to the original $C$. Such smoothing may be useful for constrained optimization problems where recent algorithms have been developed that benefit explicitly from the smoothness of constraint sets~\cite{liu2024gauges,luner2024performanceestimationsmoothstrongly,samakhoana2024scalableprojectionfreeoptimizationmethods}.

\paragraph{Outline.} Section~\ref{sec:prelim} introduces needed definitions from convex analysis, emphasizing symmetries between quantities defined on functions and sets. Section~\ref{sec:smoothable} presents our core definitions for a (sublinear) function and (conic) set to be smoothable. Then our main results are stated and proven in Section~\ref{sec: cones}, characterizing the set of all optimal smoothings of any sublinear function or convex cone. Finally, Section~\ref{sec: conicSections} applies our main results to smoothing amenable functions and convex sets, illustrating potential benefits from leveraging optimal smoothings over existing ad hoc alternatives.

    \section{Preliminaries}\label{sec:prelim}
We consider a finite-dimensional Euclidean space $\varspace$ endowed with a norm $\normE{\cdot}$ that is induced by an inner product $\ip{\cdot, \cdot}_{\varspace}$. We will drop the subscript on the inner product as it will always be clear from context. We identify the dual space of $\varspace^*$ with $\varspace$ itself in the usual way. For a linear operator $A: \varspace \to \varspace^\prime$ between two Euclidean spaces, we denote its operator norm by $\norm{A}_{\varspace \to \varspace^\prime}$, i.e.,
\begin{equation*}
    \norm{A}_{\varspace \to \varspace^\prime} = \sup_{\normE{x} \leq 1}\norm{Ax}_{\varspace^\prime} .
\end{equation*}
We denote the convex hull of a given set $C$ by $\mathrm{conv\ } C$.

\paragraph{Distances and Projections.} Let $C \subseteq \varspace$ be nonempty. We denote the interior and boundary of $C$ by $\interior C$ and $\bdry C$ respectively. We denote the Minkowski sum of two sets $C$ and $\Tilde{C}$ by $C + \Tilde{C} \defeq \set{x + \Tilde{x} \mid x \in C, \Tilde{x} \in \Tilde{C}}$ and the Hausdorff distance $\dist{\Tilde{C}, C}$ by
\begin{align*}
    & \dist{x, C} = \inf_{y \in C} \normE{x - y} \ , \\
    & \onedist{\Tilde{C}}{C} \defeq \sup_{\Tilde{x} \in \Tilde{C}} \dist{\Tilde{x}, C} \ , \\
    & \dist{\Tilde{C}, C} \defeq \max\set{\onedist{\Tilde{C}}{C}, \onedist{C}{\Tilde{C}}} \ .
\end{align*}
We denote orthogonal projection onto $C$ by $P_C(x) = \argmin\{\normE{x - y} \mid y\in C\}$. For any $x \in \varspace$ and $r \geq 0$, let $B(x,r) \defeq \set{y \in \varspace \mid \normE{x - y}\leq r}$ denote the closed ball of radius $r$ around $x$.  When $x = 0$, we sometimes write $B_r$ for $B(0, r)$. Thus for any closed $C$, $B_r + C = \set{x + u \mid \normE{u} \leq r, x \in C} = \set{y \in \varspace \mid \dist{y, C} \leq r}$. Note that if $C$ is closed and convex, then 
\begin{equation}\label{eqn: ballPlusSetProjection}
    P_{B(x, r) + C}(y) = x + P_C(y - x) + P_{B_r}(y - x - P_C(y - x))
\end{equation}
for all $x,y \in \varspace$ and $r \geq 0$. This can be easily established from the fact that $\normE{z}(z - P_{B_r}(z)) = (\normE{z} - r)_+z$ for all $z \in \varspace$. Here, and henceforth, $(\alpha)_+ \defeq \max\{0, \alpha\}$ for any $\alpha \in \RR$. 

\paragraph{Cones, Normals, and Horizons.} We say a set $K$ is a convex cone if it is closed under addition and positive rescaling. Given a generic set $C$, for $x \in C$, the associated normal and horizon cones of $C$ at $x$ are
\begin{align*}
    & N_C(x)\defeq \set{\zeta \in \varspace \mid \ip{\zeta, y - x} \leq 0 \text{ for all } y \in C}, \\
    & \hrzn{C}(x) \defeq \set{u \in \varspace \mid x + \alpha u \in C \text{ for all } \alpha \geq 0},
\end{align*}
respectively. $\hrzn{C}$ denotes the set $\bigcup_{x \in C}\hrzn{C}(x)$. We recall that when $C$ is convex and closed, $\hrzn{C}$ is convex and closed and $\hrzn{C} = \hrzn{C}(x)$ for all $x \in C$, see \cite[Theorem 3.6]{rockafellar2009}.

\paragraph{Functions and Associated Sets.}
Let $f: \varspace \to (-\infty, \infty]$ be an extended valued function. We denote the Fenchel conjugate of $f$ by $f^*$. The domain and epigraph are denoted by
\begin{align*}
     \dom f \defeq \set{x \in \varspace \mid f(x) < \infty} \quad \text{and} \quad \epi f \defeq \set{(x, \alpha) \in \varspace \times \RR \mid f(x) \leq \alpha}
\end{align*}
respectively. $f$ is proper if $\dom f \neq \emptyset$. We say $f$ is closed if $\epi f$ is closed\footnote{We always take $\varspace \times \RR$ as a Euclidean space with inner product $\ip{(x, \alpha), (x', \alpha')} = \ip{x, x'} + \alpha \alpha'$.}. The subdifferential of $f$ at $x \in \dom f$ is 
\begin{equation*}
    \partial f(x) \defeq \set{g \in \varspace \mid (g, -1) \in N_{\epi f}(x, f(x))}.
\end{equation*}

The distance between two functions $f : \varspace \to \RR$ and $g: \varspace \to \RR$ is 
\begin{equation}\label{functionDistance}
    \dist{f, g} \defeq \sup_{x \in \varspace}|f(x) - g(x)|.%\max\set{\sup_{x \in \dom f}|f(x) - g(x)|, \sup_{y \in  \dom g}|g(y) - f(y)|}
\end{equation}
For such functions, we write $f \geq g$ to denote that $f(x) \geq g(x)$ for all $x \in \varspace.$

\paragraph{Smooth Functions and Sets.}
For any $\beta>0$, we say a function $f: \varspace \to \RR$ is $\beta$-smooth if $\normE{\nabla f(x) - \nabla f(y)} \leq \beta\normE{x - y}$ for all $x$, $y \in \varspace$, where $\nabla f(x)$ is the gradient of $f$ at $x$.  The function $f(x) = \frac{\beta}{2}\normE{x}^2$ is the archetypal smooth function. In fact, a convex $f$ is $\beta$-smooth if and only if
\begin{equation}\label{eq: smoothnessQuadBound}
    f(y) \leq f(x) + \ip{\nabla f(x), y - x} + \frac{\beta}{2}\normE{y - x}^2 
\end{equation}
for all $x, y \in \varspace$.

For sets, we say a closed convex set $C$ is $\beta$-smooth if $\normE{\zeta - \zeta'} \leq \beta \normE{x - x'}$ for any $x, x' \in C$ and any unit-norm normal vector $\zeta \in N_C(x)$ and $\zeta' \in N_C(x')$. The ball $B(0, 1/\beta)$ for $\beta > 0$ is the archetypal smooth set. The following analog to \eqref{eq: smoothnessQuadBound} holds for a closed convex set $C$,
\begin{equation}\label{setSmoothnessEquivalence}
   C \text{ is $\beta$-smooth} \iff B(x - \zeta/\beta, 1 / \beta) \subseteq C \quad \text{for all unit norm } \zeta \in N_C(x) .
\end{equation}
Relating these two notions of smoothness, \cite[Lemma 7]{liu2024gauges} ensures 
\begin{equation}\label{smoothnessImpliesEpiSmoothness}
    f \text{ is $\beta$-smooth} \implies \epi f \text{ is $\beta$-smooth}
\end{equation}
for all convex $f: \varspace \to \RR$. Note that the converse to \eqref{smoothnessImpliesEpiSmoothness} is not true, see $f_3$ in Figure~\ref{fig:examples-two-norm} for a counterexample. For a more detailed discussion on the smoothness of sets, see \cite{liu2024gauges}.
    \section{Smoothable Convex Functions and Sets} \label{sec:smoothable}
Our goal is to find smooth approximations of convex functions and sets. A convex function $f$ is a $\beta$-smoothing of another convex function $g$ if $f$ is $\beta$-smooth and $\dist{f, g} < \infty$. Similarly, a convex set $S$ is a $\beta$-smoothing of another convex set $C$ if $S$ is $\beta$-smooth and $\dist{S, C} < \infty$. Here, our main interest is in understanding when functions and sets admit smoothings and which smoothings minimize the distance $\dist{f, g}$ as $\beta$ varies. The following definition formalizes this. %see Definitions~\ref{def: funcSmoothabilityDefinition}~and~\ref{def: setSmoothabilityDefinition}
\begin{definition}\label{def: funcSmoothabilityDefinition}
    A convex function $g : \varspace \to \RR$ is $(\lambda,\Delta)$-smoothable if for any $\beta > \Delta$, there exists a $\beta$-smooth convex function $f : \varspace \to \RR$ such that $\dist{f, g} \leq \frac{\lambda}{\beta-\Delta}$. Any such $f$ is called a $\beta$-smoothing of $g$ with parameters $(\lambda,\Delta)$ or a $(\lambda, \Delta, \beta)$-smoothing for short.
\end{definition}
Beck and Teboulle \cite[Definition 2.1]{Beck2012} introduced a very similar notion of function smoothability, motivating our definition. The above definition differs from~\cite[Definition 2.1]{Beck2012} in two ways: (i) we require the smooth approximation $f$ to be globally smooth and (2) we only require bounds on $\dist{f, g}$ rather than enforcing separate tolerances bounding the difference above $g-f$ and the difference below $f-g$. We introduce the following analogous definition for sets.
\begin{definition}\label{def: setSmoothabilityDefinition}
    A closed convex set $C$ is $(\lambda,\Delta)$-smoothable if for any $\beta > \Delta$, there exists a $\beta$-smooth convex set $S$ such that $\dist{S, C} \leq \frac{\lambda}{\beta-\Delta}$. Any such $S$ is called a $\beta$-smoothing of $C$ with parameters $(\lambda,\Delta)$, or a $(\lambda, \Delta, \beta)$-smoothing for short.
\end{definition}

Note that not all functions are smoothable. Consider, for instance, $g(x)=x^4$. Since every $\beta$-smooth function $f$ is upper bounded by $f(0) + f'(0)x + \frac{\beta}{2}x^2$, the difference $g-f$ must diverge as $x$ grows, making $\dist{f, g}=\infty$. Similar reasoning also establishes the necessity of restricting to $\beta>\Delta$ in the above definition. Consider, for instance $g(x) = \frac{\Delta}{2}x^2+|x|$ for any $\Delta>0$. The above reasoning establishes a $\beta$-smoothing $f$ with $\dist{f, g}<\infty$ can only exist for $\beta> \Delta$. In contrast, every convex set is smoothable. Indeed, for any closed, convex $C$ and $\beta > 0$, the set $S \defeq B_{1/\beta} + C$ is $\beta$-smooth and satisfies $\dist{C, S} \leq 1/\beta$. The fact that $B_{1/\beta} + C$ is smooth is a consequence of a more general result: The (Minkowski) sum of two closed convex sets is smooth whenever one of them is smooth, see \cite[Lemma 9]{liu2024gauges}.

\subsection{Smoothings of Sublinear Functions and Convex Cones}
By restricting to sublinear functions $\sigma$ and convex cones $K$, the above notions of smoothability can be simplified, setting $\Delta=0$ without loss of generality. To see this, consider the following simple rescaling operations: for any function $f: \varspace \to \RR$, set $C$, and $\eta>0$, define
\begin{equation}\label{eq: scaleOpDefinition}
    \scaleOp{f}{\eta}(x) = \eta f(x/\eta) \qquad \scaleOp{C}{\eta} = \eta C = \set{\eta x \mid x \in C} .
\end{equation}
Note that $\epi \scaleOp{f}{\eta} = \eta \cdot \epi f = \scaleOp{\epi f}{\eta}$. Therefore, the functional and set rescalings above are one and the same when viewing functions equivalently through their epigraphs. 

Leveraging positive homogeneity, the following lemma characterizes the smoothness of $\scaleOp{f}{\eta}$ and $\scaleOp{C}{\eta}$ and how their distance to a given sublinear $\sigma$ and cone $K$ scales with $\eta$.
\begin{lemma}\label{lem: scalingOperation}
    For any $\eta>0$, function $f$ and sublinear $\sigma$, and set $C$ and cone $K$,
    \begin{equation}\label{scaledDistance}
       \dist{\scaleOp{f}{\eta}, \sigma} = \eta \cdot \dist{f, \sigma} \quad \text{and} \quad  \dist{\scaleOp{C}{\eta}, K} = \eta \cdot \dist{C, K} .
    \end{equation}
    In addition, if $f$ and $C$ are $\beta$-smooth, then $\scaleOp{f}{\eta}$ and $\scaleOp{C}{\eta}$ are $\beta/\eta$-smooth respectively. 
\end{lemma}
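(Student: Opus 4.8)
The plan is to handle each of the three claims by exploiting the interplay between positive homogeneity and the rescaling $\scaleOp{\cdot}{\eta}$. For the function distance, I would start from the definition $\dist{\scaleOp{f}{\eta}, \sigma} = \sup_{x} |\eta f(x/\eta) - \sigma(x)|$ and use the positive homogeneity of the sublinear $\sigma$, namely $\sigma(x) = \eta\,\sigma(x/\eta)$, to rewrite the integrand as $\eta\,|f(x/\eta) - \sigma(x/\eta)|$. Pulling the constant $\eta > 0$ out of the supremum and substituting $y = x/\eta$ (a bijection of $\varspace$ onto itself since $\eta > 0$) then yields $\eta\sup_y |f(y) - \sigma(y)| = \eta\dist{f, \sigma}$, as desired.

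For the set distance, the two ingredients are that $K$ is a cone, so $\eta K = K$, and that Hausdorff distance scales linearly under a uniform dilation. Concretely, I would first observe that for any point $x$ and cone $K$, $\dist{\eta x, K} = \dist{\eta x, \eta K} = \eta\dist{x, K}$, where the first equality uses $\eta K = K$ and the second follows by factoring $\eta$ out of $\inf_{z \in K}\normE{\eta x - \eta z}$. Taking suprema over $x \in C$ gives $\onedist{\eta C}{K} = \eta\,\onedist{C}{K}$, and the symmetric argument (again using $\eta K = K$) gives $\onedist{K}{\eta C} = \eta\,\onedist{K}{C}$. The claim $\dist{\scaleOp{C}{\eta}, K} = \eta\dist{C, K}$ then follows immediately from the definition of $\dist{\cdot,\cdot}$ as the maximum of the two one-sided distances.

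For the smoothness claims I would treat the function and set cases separately. In the function case, the chain rule gives $\nabla \scaleOp{f}{\eta}(x) = \nabla f(x/\eta)$, so that $\normE{\nabla \scaleOp{f}{\eta}(x) - \nabla \scaleOp{f}{\eta}(y)} = \normE{\nabla f(x/\eta) - \nabla f(y/\eta)} \leq \beta\normE{(x - y)/\eta} = (\beta/\eta)\normE{x - y}$, which is exactly $\beta/\eta$-smoothness. In the set case, the key observation is that normal cones are invariant under positive dilation: $\zeta \in N_{\eta C}(\eta x)$ iff $\ip{\zeta, \eta y - \eta x} \leq 0$ for all $y \in C$ iff $\zeta \in N_C(x)$. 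Hence for $u = \eta x$ and $u' = \eta x'$ in $\eta C$ with unit-norm normals $\zeta \in N_{\eta C}(u) = N_C(x)$ and $\zeta' \in N_{\eta C}(u') = N_C(x')$, the $\beta$-smoothness of $C$ gives $\normE{\zeta - \zeta'} \leq \beta\normE{x - x'} = (\beta/\eta)\normE{u - u'}$, establishing $\beta/\eta$-smoothness of $\eta C$.

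None of the steps presents a serious obstacle; the only points requiring care are bookkeeping ones. First, the substitution $y = x/\eta$ should be recognized as a bijection of $\varspace$, so that the supremum is genuinely preserved rather than merely bounded on one side. Second, in the set distance claim I must invoke $\eta K = K$ on \emph{both} one-sided distances, since the dilation is written as acting on $C$ but not explicitly on $K$. Beyond these, each computation is mechanical, and the structural reason the function and set settings behave identically is the shared identity $\epi \scaleOp{f}{\eta} = \eta \cdot \epi f$ together with the positive homogeneity of $\sigma$ and the cone structure of $K$.
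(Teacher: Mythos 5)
Your proposal is correct and follows essentially the same route as the paper's proof: positive homogeneity of $\sigma$ to pull $\eta$ out of the supremum, the identity $\eta K = K$ combined with dilation-homogeneity of the Hausdorff distance, the chain rule giving $\nabla \scaleOp{f}{\eta}(x) = \nabla f(x/\eta)$, and the normal-cone identity $N_{\scaleOp{C}{\eta}}(z) = N_C(z/\eta)$. The only difference is cosmetic: you spell out the one-sided Hausdorff distances and verify the normal-cone identity explicitly, which the paper simply asserts.
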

\begin{proof}
    We have
    \begin{align*}
        \dist{\scaleOp{f}{\eta}, \sigma} = \sup_{x \in \varspace}\abs{\eta f(x/\eta) - \sigma(x)} = \sup_{x \in \varspace}\eta\abs{f(x/\eta) - \sigma(x/\eta)} = \eta\dist{f, \sigma}
    \end{align*}
    where the third equality uses that $\lambda \sigma(x) = \sigma(\lambda x)$ whenever $\lambda \geq 0$. Similarly
    \begin{align*}
        \dist{\scaleOp{C}{\eta}, K} = \dist{\eta C, K} = \dist{\eta C, \eta K} = \eta\dist{C, K}
    \end{align*}
    where the third equality follows because $\lambda K = K$ whenever $\lambda > 0$. For smoothness, we have
    \begin{align*}
       \normE{\nabla \scaleOp{f}{\eta}(x) - \nabla \scaleOp{f}{\eta}(y)} = \normE{\nabla f(x/\eta) - \nabla f(y/\eta)} \leq (\beta/\eta) \normE{x - y}
    \end{align*}
    where the equality follows by the chain rule and the inequality by $\beta$-smoothness of $f.$ Similarly, for any unit norm $\zeta_1 \in N_{\scaleOp{C}{\eta}}(x)$ and $\zeta_2 \in N_{\scaleOp{C}{\eta}}(y)$ we have
    \begin{align*}
        \normE{\zeta_1 - \zeta_2} \leq \beta \normE{x/\eta - y/\eta} = (\beta/\eta)\normE{x - y}
    \end{align*}
    where the inequality follows from $\beta$-smoothness of $C$ and the identity $N_{\scaleOp{C}{\eta}}(z) = N_C(z / \eta)$.
\end{proof}

From Lemma~\ref{lem: scalingOperation}, it is clear that if a sublinear function $\sigma$ or a cone $K$ is $(\lambda,\Delta)$-smoothable, then it must be $(\lambda,0)$-smoothable. Hence, going forward in our study of optimal smoothings of sublinear functions and convex cones, it suffices to fix $\Delta=0$. In this case, we will abuse notation and say $\sigma$ or $K$ is $\lambda$-smoothable to denote $(\lambda,0)$-smoothability.

\subsection{Inner and Outer Smoothings}
In Definitions~\ref{def: funcSmoothabilityDefinition}~and~\ref{def: setSmoothabilityDefinition}, there is no distinction on whether the approximation is from above or below, or inside or outside. It can sometimes be useful in practice to make such a distinction. This leads to the following definitions for function and set smoothing from inside and from outside. When applying notions of ``inner''/``outer'' approximations to functions, we take the convention of identifying each function with its epigraph. Hence, an inner approximation $f$ of $g$ is a function with $\epi f\subseteq \epi g$. That is, $f\geq g$. %Definitions~\ref{def: funcInnerSmoothabilityDefinition}~and~\ref{def: setInnerSmoothabilityDefinition} deal with approximation from the inside.
\begin{definition}\label{def: funcInnerSmoothabilityDefinition}
    A convex function $g$ is inner (or outer) $(\lambda,\Delta)$-smoothable if for any $\beta > \Delta$, there exists a $(\lambda,\Delta,\beta)$-smoothing $f$ such that $f \geq g$ (or $f\leq g$). %Such a smoothing is called an inner $(\lambda,\Delta,\beta)$-smoothing. 
\end{definition}
\begin{definition}\label{def: setInnerSmoothabilityDefinition}
    A convex set $C$ is inner (or outer) $(\lambda,\Delta)$-smoothable if for any $\beta > \Delta$, there exists a $(\lambda,\Delta,\beta)$-smoothing $S$ such that $S \subseteq C$ (or $S \supseteq C$). %Such a smoothing is called an inner $(\lambda,\Delta,\beta)$-smoothing. 
\end{definition}
As noted in the previous subsection, for sublinear functions and convex cones, $\Delta$ can be set to zero in all the above definitions without loss of generality. In such settings, we abuse notation and omit $\Delta=0$ from our statements.

\subsection{Example: Smoothing the Euclidean Norm and the Second Order Cone}
To illustrate these definitions, we briefly consider smoothing the standard Euclidean norm $\sigma(x) = \norm{x}_2$ and the second-order cone $K \defeq \epi \norm{\cdot}_2$. First, we focus on smoothing $\sigma$. Consider the following five candidate smoothings of the Euclidean norm
\begin{align*}
    &f_1(x) := \begin{cases}
        \frac{1}{2}\norm{x}_2^2 & \text{if } \norm{x}_2 \leq 1 \\
        \norm{x}_2- \frac{1}{2} & \text{otherwise}
    \end{cases} & f_2(x) := f_1(x) + \frac{1}{4} \\
    &f_3(x) := \begin{cases}
        2\sqrt{2} - \sqrt{8 - \norm{x}_2^2} & \text{if } \norm{x}_2\leq 2 \\
        \norm{x}_2 - 2(2- \sqrt{2})  & \text{otherwise}
    \end{cases} & f_4(x) := f_3(x) + 6\sqrt{2} - 8  \\
    &f_5(x) := \sqrt{1 + \norm{x}_2^2} - 1
\end{align*}
where $f_1$ and $f_2$ are the Moreau envelope of $\sigma$ (known as a Huber function) and a translation of it, $f_3$ and $f_4$ are functions whose epigraph is a translation of $\epi\sigma + B_{2\sqrt{2}}$, and $f_5$ is an ad hoc choice avoiding piecewise definitions.
One can verify that $f_1,f_3,f_5$ are all outer smoothings while $f_2$ and $f_4$ are neither inner nor outer smoothings.

\begin{figure}[t]
  %%% Left half: the image %%%
  \begin{minipage}[t]{0.42\textwidth}
    \vspace{0pt}  % align this minipage at its top
    \centering
    \includegraphics[width=\linewidth]{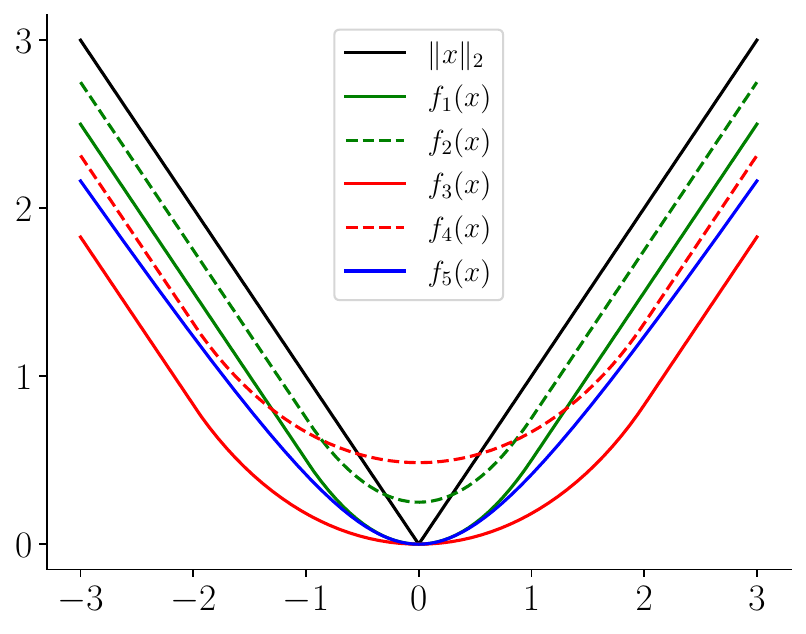}
  \end{minipage}%
  \hfill
  %%% Right: table %%%
  \begin{minipage}[t]{0.56\textwidth}
    \vspace{0pt}  % align this minipage at its top
    \centering
    \begin{tabular}{cccc}
      \toprule
      $f_i$   & $\beta_i$ & $D_i$   & $\lambda_i$ \\
      \midrule
      $f_1$   & $1$               & $1/2$          & $1/2$          \\
      $f_2$   & $1$               & $1/4$          & $1/4$          \\
      $f_3$   & $1$               & $2(2-\sqrt{2})$     & $2(2-\sqrt{2})$     \\
      $f_4$   & $1$               & $\sqrt{2}(6\sqrt{2} - 8)$        & $\sqrt{2}(6\sqrt{2} - 8)$        \\
      $f_5$   & $1$               & $1$                 & $1$                 \\
      \midrule
      $S_i$   & $\beta_i'$ & $D_i'$   & $\lambda_i'$ \\
      \midrule
      $S_1$   & $1$               & $1/2\sqrt{2}$       & $1/2\sqrt{2}$       \\
      $S_2$   & $1$               & 1/4                  &       1/4            \\
      $S_3$   & $1/2\sqrt{2}$     & $2(\sqrt{2}-1)$       & $(\sqrt{2}-1)/\sqrt{2}$ \\
      $S_4$   & $1/2\sqrt{2}$     & $6\sqrt{2} - 8$                 & $(\sqrt{2}-1)/(1+\sqrt{2})$                   \\
      $S_5$   & $1$               & $1/\sqrt{2}$        & $1/\sqrt{2}$        \\
      \bottomrule
      %\label{table: NormSmoothings}
    \end{tabular}
  \end{minipage}%
  \caption{%
    Left: Five candidate smoothings of $\norm{x}_2$ in 1D. Right: Smoothness and distance bounds, and the implied smoothability bound for the two-norm and second-order cone from each candidate.
  }
  \label{fig:examples-two-norm}
\end{figure}

For each $i=1,2,\dots, 5$, Figure~\ref{fig:examples-two-norm} computes constants for the $\beta_i$-smoothness of each $f_i$ and the distance $D_i=\dist{f_i, \sigma}$. Since $\sigma=\norm{\cdot}_2$ is sublinear, Lemma~\ref{lem: scalingOperation} ensures $\scaleOp{f_i}{\eta}$ is $\beta_i/\eta$ smoothing with distance from $\sigma$ of $\eta D_i$. As a result, each $f_i$ proves $\sigma$ is $\lambda_i=\beta_i D_i$-smoothable. Among these choices, $f_2$ provides the best smoothability constant in general and $f_1$ provides the best constant among the outer smoothings. Our Theorems~\ref{thm: optimalSigmaSmoothing} and~\ref{thm: optimalSigmaOuterSmoothing} establish that, in fact, these are the unique optimal smoothing and outer smoothing of the Euclidean norm among all $1$-smooth functions.

Each of these function approximations provides a smooth set approximating the second-order cone $K=\epi \sigma$ given by $S_i = \epi f_i$. Each of these sets has some associated smoothness constant $\beta_i'$ and distance $D_i' = \dist{S_i,K}$, also computed in Figure~\ref{fig:examples-two-norm}. Again by Lemma~\ref{lem: scalingOperation}, we know $\scaleOp{S_i}{\eta}$ is $\beta_i'/\eta$ smoothing with distance from $K$ of $\eta D_i'$. Hence, each $S_i$ proves the second-order cone $K$ is smoothable with constant $\lambda_i' = \beta_i' D_i'$. Among these, $S_4$ provides the best smoothability constant in general and $S_3$ provides the best constant among the outer smoothings. Our Theorems~\ref{thm: optimalConeSmoothing} and~\ref{thm: optimalConeOuterSmoothing} establish that these are the unique optimal smoothings.

After developing our theory for generic sublinear functions and convex cones, more interesting examples are considered in Section~\ref{subsec: examples}. Unlike our simple example here, optimal smoothings in general are not unique; in such settings, we provide a characterization of all optimal smoothings. The fact that the Moreau envelope of the two-norm plus a constant was an optimal function smoothing is shared among norms in general, but does not always provide optimal smoothings for sublinear functions.

    \section{Optimal Smoothings of Sublinear Functions and Cones}\label{sec: cones}
As our main result, we characterize the set of all optimal smoothings of any sublinear function or convex cone. Recall that $\sigma : \varspace \to \RR$ is sublinear if 
\begin{align*}
    \sigma(\lambda x) = \lambda \sigma(x)\mathrm{\ if\ } \lambda \geq 0, \qquad \qquad \sigma (x+y) \leq \sigma(x) + \sigma(y)
\end{align*}
and $K \subseteq \varspace$ is a convex cone if 
\begin{align*}
    \lambda x \in K \mathrm{\ if\ } x \in K, \: \lambda \geq 0, \qquad \qquad x+y\in K \mathrm{\ if\ } x,y\in K .
\end{align*}
For the rest of the paper, $\sigma$ will denote a sublinear function and $K$ a closed, convex cone, with a nonempty interior such that $K \neq \varspace$. We define optimal smoothings of functions and sets as follows.
\begin{definition}
    A convex function $f: \varspace \to \RR$ is an optimal $\beta$-smoothing of $g:\varspace \to \RR$ if $f$ is $\beta$-smooth and $\dist{g, f} \leq \dist{g, \Tilde{f}}$ for all $\beta$-smooth, convex $\Tilde{f}: \varspace \to \RR$.
\end{definition}
\begin{definition}
    A closed convex set $S$ is an optimal $\beta$-smoothing of another set $C$ if $S$ is $\beta$-smooth and $\dist{C, S} \leq \dist{C, \Tilde{S}}$ for all $\beta$-smooth, convex $\Tilde{S}$.
\end{definition}

Recall that as an immediate consequence of Lemma~\ref{lem: scalingOperation}, it suffices to characterize all optimal smoothings of $\sigma$ and $K$ at the fixed smoothness level $\beta = 1$. To construct such smoothings, we use the following key objects, which we call the (``functional'' for $\sigma$ or ``conic'' for $K$)  core, center, and width, respectively:
\begin{equation*}
    \begin{matrix}
        \core{\sigma} \defeq \set{(x, r) \in \varspace \times \RR \mid (x, r) + \epi\frac{1}{2}\normE{\cdot}^2 \subseteq \epi \sigma},  & & \core{K} \defeq \set{x \in \varspace \mid x + B(0, 1) \subseteq K}, \\
        & & \\
        (\cCenter{\sigma}, r_\sigma) \defeq \underset{(x, r) \in C_\sigma}{\argmin \: } r + \frac{1}{2}\normE{x}^2, & & x_K \defeq \underset{x \in \core{K}}{\argmin \: }  \normE{x}, \\
        & & \\
        \cGap{\sigma}  \defeq r_\sigma + \frac{1}{2}\normE{x_{\sigma}}^2, & & w_K \defeq \normE{x_K} - 1.
    \end{matrix}
\end{equation*}

We claim the functional and conic cores are both closed and convex. The following pair of lemmas establishes this by giving equivalent dual formulas for both objects as a maximum of affine functions and as an intersection of halfspaces. In particular, the functional core $\core{\sigma}$ is the epigraph of the following closed, convex function:
\begin{equation}\label{priceFormula}
     \price{\sigma}(x) \defeq \underset{\zeta \in \partial \sigma(0)}{\max} \ip{\zeta, x} + \frac{1}{2}\normE{\zeta}^2 \ .
\end{equation}
\begin{lemma}\label{lem: coreSigmaIsPriceEpigraph}
    For any sublinear function $\sigma$, $\core{\sigma} = \epi \price{\sigma}$.
\end{lemma}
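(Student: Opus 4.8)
The plan is to prove the set equality by showing that both sides consist of exactly those pairs $(x,r)$ for which $\sigma(y)\le r+\tfrac12\normE{y-x}^2$ holds for every $y\in\varspace$. First I would unpack the defining inclusion for the core. Since $\epi\tfrac12\normE{\cdot}^2 = \set{(u,s)\mid s\ge\tfrac12\normE{u}^2}$, the condition $(x,r)+\epi\tfrac12\normE{\cdot}^2\subseteq\epi\sigma$ says precisely that $\sigma(x+u)\le r+s$ whenever $s\ge\tfrac12\normE{u}^2$. For fixed $u$, the tightest of these constraints is the one with the smallest admissible $s$, namely $s=\tfrac12\normE{u}^2$, so the inclusion is equivalent to requiring $\sigma(x+u)\le r+\tfrac12\normE{u}^2$ for all $u$. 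Substituting $y=x+u$, this reads $\sigma(y)\le r+\tfrac12\normE{y-x}^2$ for all $y$; that is, $(x,r)\in\core{\sigma}$ iff $r\ge\sup_{y\in\varspace}\left(\sigma(y)-\tfrac12\normE{y-x}^2\right)$.

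It then remains to identify this supremum with $\price{\sigma}(x)$. Here I would invoke the standard fact that a finite-valued sublinear function is the support function of its subdifferential at the origin: $\partial\sigma(0)$ is nonempty, compact, and convex, and $\sigma(y)=\max_{\zeta\in\partial\sigma(0)}\ip{\zeta,y}$ for all $y$ (compactness follows since a real-valued sublinear function is globally Lipschitz). Substituting this representation and exchanging the two suprema gives $\sup_{y}\left(\sigma(y)-\tfrac12\normE{y-x}^2\right)=\sup_{\zeta\in\partial\sigma(0)}\sup_{y}\left(\ip{\zeta,y}-\tfrac12\normE{y-x}^2\right)$. The inner problem is an unconstrained concave quadratic maximization in $y$, solved at $y=x+\zeta$ with optimal value $\ip{\zeta,x}+\tfrac12\normE{\zeta}^2$. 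Compactness of $\partial\sigma(0)$ ensures the remaining outer supremum is attained, so the expression equals $\max_{\zeta\in\partial\sigma(0)}\ip{\zeta,x}+\tfrac12\normE{\zeta}^2=\price{\sigma}(x)$.

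Combining the two steps yields $(x,r)\in\core{\sigma}\iff r\ge\price{\sigma}(x)\iff(x,r)\in\epi\price{\sigma}$, which is the claim; as a byproduct, $\price{\sigma}$ is a genuine finite, closed, convex function, being a pointwise maximum of the affine maps $x\mapsto\ip{\zeta,x}+\tfrac12\normE{\zeta}^2$. I do not anticipate a serious obstacle: the only points requiring care are justifying the reduction to the single binding constraint $s=\tfrac12\normE{u}^2$, justifying the exchange of suprema, and confirming that compactness of $\partial\sigma(0)$ legitimizes writing $\max$ rather than $\sup$ in the definition of $\price{\sigma}$.
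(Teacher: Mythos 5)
Your proposal is correct and follows essentially the same route as the paper's proof: both reduce membership in $\core{\sigma}$ to the condition $r \geq \sup_{y}\bigl(\sigma(y) - \tfrac{1}{2}\normE{y-x}^2\bigr)$, then use the support-function representation $\sigma(y) = \sup_{\zeta \in \partial\sigma(0)}\ip{\zeta, y}$, exchange the suprema, and solve the inner concave quadratic at $y = x + \zeta$ to obtain $\price{\sigma}(x)$. Your added remarks on the binding constraint $s = \tfrac{1}{2}\normE{u}^2$ and on compactness of $\partial\sigma(0)$ justifying the $\max$ are fine details the paper leaves implicit, but do not change the argument.
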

\begin{proof}
Observe that it suffices to show that $\price{\sigma}$ equals $\sup_{y \in \varspace} \sigma(y) - \frac{1}{2}\normE{x - y}^2$. This is sufficient since for any $(x, r) \in \varspace \times \RR$,
\begin{align*}
    (x, r) \in \core{\sigma} 
     \iff r + \frac{1}{2}\normE{x - y}^2 \geq \sigma(y) \text{ for all } y \in \varspace   
    & \iff r \geq \sup_{y \in \varspace} \sigma(y) - \frac{1}{2}\normE{x - y}^2 \\
    & \iff (x, r) \in \epi \price{\sigma}
\end{align*}
where the first equivalence follows by definition of $\core{\sigma}$ while the last one uses the claimed equality. Now
\begin{align*}
    \sup_{y \in \varspace} \sigma(y) - \frac{1}{2}\normE{x - y}^2 
    &= \sup_{y \in \varspace} \sup_{\zeta \in \partial \sigma(0)} \ip{\zeta, y} - \frac{1}{2}\normE{x - y}^2 \\
    &= \sup_{\zeta \in \partial \sigma(0)} \ip{\zeta, x} + \sup_{y \in \varspace} \ip{\zeta, y - x} - \frac{1}{2}\normE{y - x}^2 \\
    & = \sup_{\zeta \in \partial \sigma(0)} \ip{\zeta, x} + \frac{1}{2}\normE{\zeta}^2
\end{align*}
where the first equality holds because sublinearity implies $\sigma(y) = \sup_{\zeta \in \partial \sigma(0)}\ip{\zeta, y}$ for all $y \in \varspace$. 
\end{proof}
\begin{lemma}\label{lem: polyhedralExpressionOfCore}
    For any convex cone $K$, $\core{K} = \set{x \in \varspace \mid \ip{\zeta, x} \leq -1, \zeta \in N_K(0), \normE{\zeta} = 1}$.
\end{lemma}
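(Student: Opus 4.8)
The plan is to establish the claimed identity by a short chain of equivalences, reducing the containment $x + B(0,1) \subseteq K$ (which defines $\core{K}$) to a family of scalar inequalities via a polarity argument.

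First I would recognize the normal cone at the origin as the polar cone: by definition $N_K(0) = \set{\zeta \in \varspace \mid \ip{\zeta, y} \leq 0 \text{ for all } y \in K}$, which is exactly the polar $K^\circ$. Since $K$ is a closed convex cone, the bipolar theorem yields $K = K^{\circ\circ}$; that is, $y \in K$ if and only if $\ip{\zeta, y} \leq 0$ for every $\zeta \in N_K(0)$. This rewrites membership in $K$ as an intersection of homogeneous halfspaces and is the only nontrivial ingredient. The hypotheses that $K$ is closed and convex are precisely what this step needs, and $K \neq \varspace$ guarantees $N_K(0) \neq \set{0}$, so the description is genuinely nonvacuous.

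Next I would expand the definition of the core and interchange the quantifiers. By definition $x \in \core{K}$ holds iff $x + u \in K$ for every $u$ with $\normE{u} \leq 1$. Applying the halfspace description of $K$, this is equivalent to $\ip{\zeta, x + u} \leq 0$ for every $\zeta \in N_K(0)$ and every such $u$. For each fixed $\zeta$, the requirement over all admissible $u$ is governed by the worst case, and since $\normE{\cdot}$ is induced by the inner product we have $\sup_{\normE{u} \leq 1} \ip{\zeta, u} = \normE{\zeta}$. Hence the containment is equivalent to $\ip{\zeta, x} \leq -\normE{\zeta}$ holding for all $\zeta \in N_K(0)$.

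Finally I would reduce to unit-norm normals using positive homogeneity of the cone $N_K(0)$: every nonzero $\zeta$ can be written as $\normE{\zeta}\,\hat\zeta$ with $\hat\zeta \in N_K(0)$ of unit norm, so $\ip{\zeta, x} \leq -\normE{\zeta}$ for all $\zeta$ is equivalent to $\ip{\hat\zeta, x} \leq -1$ for all unit-norm $\hat\zeta \in N_K(0)$ (the case $\zeta = 0$ being trivially satisfied). This is exactly the set on the right-hand side, completing the equivalence. I expect the main—indeed only—obstacle to be invoking the bipolar representation $K = K^{\circ\circ}$ cleanly; the remaining steps are routine bookkeeping around the normalization and the self-duality of $\normE{\cdot}$.
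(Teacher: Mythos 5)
Your proof is correct and takes essentially the same approach as the paper's: both reduce the containment $x + B(0,1) \subseteq K$ to the halfspace inequalities $\ip{\zeta, x} \leq -1$ via the dual description of the closed convex cone $K$ through $N_K(0)$ --- your explicit bipolar step is exactly what the paper invokes implicitly when it concludes $x + u \in K$ from $\sup_{\zeta \in N_K(0), \normE{\zeta}=1}\ip{\zeta, x+u} \leq 0$, and your computation $\sup_{\normE{u}\leq 1}\ip{\zeta, u} = \normE{\zeta}$ plays the role of the paper's Cauchy--Schwarz bound (backward direction) and its choice $u = \zeta$ (forward direction). The only cosmetic difference is that you argue by a single chain of equivalences while the paper proves the two inclusions separately.
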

\begin{proof}
    Let $x \in \core{K}$. Then for any $\zeta \in N_K(0)$ with $\normE{\zeta} = 1$, we have
    \begin{align*}
        \ip{\zeta, x} = -1 + \ip{\zeta, x + \zeta} \leq -1
    \end{align*}
    where the first equality follows because $\ip{\zeta, \zeta} = 1$ and the inequality follows because $x \in \core{K}$ implies $x + \zeta \in K$ since $\normE{\zeta} \leq 1$.
    Now suppose $\sup_{\zeta \in N_K(0), \normE{\zeta}=1} \ip{\zeta, x} \leq -1$. Then, for any $u$ with $\normE{u} \leq 1$, we have
    \begin{align*}
        \sup_{\zeta \in N_K(0), \normE{\zeta}=1} \ip{\zeta, x + u} \leq \normE{u} + \sup_{\zeta \in N_K(0), \normE{\zeta}=1} \ip{\zeta, x} \leq 0
    \end{align*}
    where the first inequality follows from the Cauchy-Schwarz inequality and the last follows by assumption. Since $K$ is a closed convex cone, this implies $x + u \in K$. Since $u$ was arbitrary, it follows that $x \in \core{K}$.
\end{proof}

From Lemma~\ref{lem: polyhedralExpressionOfCore}, it is immediate that the conic center $x_K$, defined above as the projection onto $\core{K}$, exists and is unique. Similarly, from Lemma~\ref{lem: coreSigmaIsPriceEpigraph}, it follows that $\underset{(x, r) \in C_\sigma}{\min \: } r + \frac{1}{2}\normE{x}^2$ has a unique solution given by
\begin{equation}\label{focusFormula}
    \cCenter{\sigma} = \argmin_{x \in \varspace}\price{\sigma}(x) + \frac{1}{2}\normE{x}^2 \quad \text{and} \quad r_\sigma = \price{\sigma}(\cCenter{\sigma})
\end{equation}
since
$$ (\cCenter{\sigma}, r_\sigma) = \underset{(x, r) \in \epi \price{\sigma}}{\argmin \: } r + \frac{1}{2}\normE{x}^2 = \underset{(x,r), r=\price{\sigma}(x)}{\argmin \: } \price{\sigma}(x) + \frac{1}{2}\normE{x}^2 \ . $$ 
Noting $\price{\sigma}(x) + \frac{1}{2}\normE{x}^2$ is 1-strongly convex, this minimization has a unique minimizer.

Lastly, we note that our conic width $\cGap{K}$ is closely related to the conic width of Xiong and Freund~\cite[Section 1.3]{xiong2024rolelevelsetgeometryperformance}, which is an important quantity in the analysis of conic programming algorithms (see~\cite{xiong2024rolelevelsetgeometryperformance, Freund1999}). In fact, it can easily be shown that their conic width is equal to $1 /(\cGap{K} + 1) = 1/\normE{\cCenter{K}}$.

\subsection{Characterization of All Optimal Smoothings}
We find that the set of optimal smoothings of a given sublinear $\sigma$ always takes the structured form of all $\beta$-smooth convex functions lying pointwise between two extremal smoothings. In particular, we consider the following minimal and maximal $1$-smoothings:
\begin{align}
\begin{matrix}
    \sGen{f}{\sigma} = \left[\infConv{\left(r_\sigma + \sigma(\cdot - \cCenter{\sigma})\right)}{\frac{1}{2}\normE{\cdot}^2}\right] - \cGap{\sigma}/2, & & & & \sGen{F}{\sigma}  = \left[\infConv{\price{\sigma}}{\frac{1}{2}\normE{\cdot}^2}\right] - \cGap{\sigma}/2
\end{matrix}
\end{align}
 where $\infConv{f}{g}$ denotes the infimal convolution between two functions $f:\varspace \to \RR$ and $g : \varspace \to \RR$, i.e.
\begin{equation*}
    (\infConv{f}{g})(x) = \inf_{y \in \varspace} f(y) + g(x - y) .
\end{equation*}
Similarly, for a closed convex cone $K$, to characterize all smoothings, we consider the following minimal and maximal $1$-smoothings:
\begin{align}
    \begin{matrix}
    \sGen{s}{K} = \frac{1}{1 + \frac{\cGap{K}}{2}}\left[(\cCenter{K} + K) + B(0, 1 + \frac{\cGap{K}}{2})\right], & & & & \sGen{S}{K}= \frac{1}{1 + \frac{\cGap{K}}{2}}\left[\core{K} + B(0, 1 + \frac{\cGap{K}}{2})\right] \ . 
\end{matrix}
\end{align}
The following theorems establish for functions and sets, respectively, that the smoothings above provide the smallest and largest optimal smoothings of $\sigma$ and $K$ and suffice to characterize the set of all optimal smoothings. We defer proofs of these main results to Section~\ref{sec: conesProofs}. 
\begin{theorem}\label{thm: optimalSigmaSmoothing}
    A sublinear function $\sigma$ is $\lambda$-smoothable if and only if $\lambda \geq \cGap{\sigma}/2$. In particular, for any target smoothness level $\beta > 0$, a $\beta$-smooth convex $f$ is an optimal $\beta$-smoothing of $\sigma$ if and only if $\scaleOp{\sGen{F}{\sigma}}{1/\beta} \leq f \leq \scaleOp{\sGen{f}{\sigma}}{1/\beta}$.
\end{theorem}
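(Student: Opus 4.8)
The plan is to reduce to $\beta=1$ and then (i) compute the optimal distance and (ii) characterize the minimizers, working largely on the Fenchel-conjugate side. By Lemma~\ref{lem: scalingOperation}, $f$ is an optimal $\beta$-smoothing of $\sigma$ at distance $d$ if and only if $\scaleOp{f}{\beta}$ is an optimal $1$-smoothing at distance $\beta d$, and $\scaleOp{\scaleOp{f}{\beta}}{1/\beta}=f$; so it suffices to show that over all $1$-smooth convex $f$ the minimum of $\dist{f,\sigma}$ is $\cGap{\sigma}/2$, attained exactly when $\sGen{F}{\sigma}\le f\le\sGen{f}{\sigma}$. The smoothability claim then follows since, by the same scaling, $\sigma$ is $\lambda$-smoothable iff some $1$-smooth convex $f$ has $\dist{f,\sigma}\le\lambda$, i.e.\ iff $\lambda\ge\cGap{\sigma}/2$. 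Set $\Sigma\defeq\partial\sigma(0)$, so $\sigma=\sup_{\zeta\in\Sigma}\ip{\zeta,\cdot}$ and $\sigma^*=\delta_\Sigma$, write $Mg\defeq\infConv{g}{\tfrac12\normE{\cdot}^2}$ for the Moreau envelope, and use the dictionary ``$f$ is $1$-smooth convex $\iff f^*$ is $1$-strongly convex.'' A one-line minimax gives $M\sigma(x)=\sup_{\zeta\in\Sigma}\ip{\zeta,x}-\tfrac12\normE{\zeta}^2$, and conjugating the definition yields the clean formula $(\sGen{f}{\sigma})^*(\zeta)=\tfrac12\normE{\zeta+\cCenter{\sigma}}^2-\cGap{\sigma}/2+\delta_\Sigma(\zeta)$, which I lean on twice below.

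For the distance lower bound, fix any $1$-smooth convex $f$ and let $b\defeq\sup_y(\sigma(y)-f(y))$ (done if $b=\infty$). Evaluating the $1$-smoothness upper bound of $f$ at the origin and using $\sigma\le f+b$ gives $\sigma(y)\le (f(0)+b)+\ip{\nabla f(0),y}+\tfrac12\normE{y}^2$ for all $y$; completing the square exhibits a unit-curvature paraboloid centered at $-\nabla f(0)$, with vertex value $f(0)+b-\tfrac12\normE{\nabla f(0)}^2$, that dominates $\sigma$, so this vertex datum lies in $\core{\sigma}$ and hence $f(0)+b=\big(f(0)+b-\tfrac12\normE{\nabla f(0)}^2\big)+\tfrac12\normE{-\nabla f(0)}^2\ge\cGap{\sigma}$. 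Since $\sigma(0)=0$ gives $\dist{f,\sigma}\ge\max\{f(0),b\}\ge(f(0)+b)/2$, we get $\dist{f,\sigma}\ge\cGap{\sigma}/2$. For the matching upper bound I establish the sandwich $\sigma-\cGap{\sigma}/2\le\sGen{F}{\sigma}\le\sGen{f}{\sigma}\le\sigma+\cGap{\sigma}/2$: the left inequality is $M\price{\sigma}\ge\sigma$ (proved directly by discarding the prox term); the right is $(\sGen{f}{\sigma})^*\ge\delta_\Sigma-\cGap{\sigma}/2$, immediate from the conjugate formula; and the middle is $M$-monotonicity applied to $\price{\sigma}\le r_\sigma+\sigma(\cdot-\cCenter{\sigma})$, which holds termwise because $r_\sigma=\price{\sigma}(\cCenter{\sigma})$ bounds $\ip{\zeta,\cCenter{\sigma}}+\tfrac12\normE{\zeta}^2$ over $\Sigma$. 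The sandwich makes both extremal smoothings $1$-smooth convex with distance $\le\cGap{\sigma}/2$, so with the lower bound they are optimal and the smoothability characterization follows.

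It remains to characterize the optimal $f$, i.e.\ the $1$-smooth convex $f$ with $\abs{f(x)-\sigma(x)}\le\cGap{\sigma}/2$ for all $x$. One inclusion is the sandwich: $\sGen{F}{\sigma}\le f\le\sGen{f}{\sigma}$ forces $\abs{f-\sigma}\le\cGap{\sigma}/2$, so $f$ is optimal. For the converse, note first that $f\ge\sigma-\cGap{\sigma}/2$ says exactly that $f+\cGap{\sigma}/2$ is a $1$-smooth convex majorant of $\sigma$; I will show $M\price{\sigma}$ is the \emph{smallest} such majorant, whence $f+\cGap{\sigma}/2\ge M\price{\sigma}$, i.e.\ $f\ge\sGen{F}{\sigma}$. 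On conjugates, a $1$-smooth convex $g\ge\sigma$ is a $1$-strongly convex $g^*\le\delta_\Sigma$; writing $g^*=\tfrac12\normE{\cdot}^2+\phi$ with $\phi$ convex, the constraint is $\phi\le-\tfrac12\normE{\cdot}^2+\delta_\Sigma$, whose largest convex solution is the closed convex hull $\price{\sigma}^*$, giving $g\ge(\tfrac12\normE{\cdot}^2+\price{\sigma}^*)^*=M\price{\sigma}$. For the reverse bound $f\le\sGen{f}{\sigma}$ I use the equality case above: optimality forces $f(0)=\cGap{\sigma}/2$ (from $f(0)\le\sigma(0)+\cGap{\sigma}/2$ and $f(0)+b\ge\cGap{\sigma}$ with $b\le\cGap{\sigma}/2$), so the vertex datum attains the minimum defining $\cGap{\sigma}$ and, by \emph{uniqueness} of that minimizer, equals $(\cCenter{\sigma},r_\sigma)$, pinning $\nabla f(0)=-\cCenter{\sigma}$. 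Then $-\cCenter{\sigma}$ is the global minimizer of $f^*$ with $f^*(-\cCenter{\sigma})=-f(0)=-\cGap{\sigma}/2$, and $1$-strong convexity of $f^*$ gives $f^*(\zeta)\ge-\cGap{\sigma}/2+\tfrac12\normE{\zeta+\cCenter{\sigma}}^2=(\sGen{f}{\sigma})^*(\zeta)$ for all $\zeta$, i.e.\ $f\le\sGen{f}{\sigma}$.

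The main obstacle is this converse, and specifically the asymmetry between its two halves. The bound $f\ge\sGen{F}{\sigma}$ rests on a genuine extremal principle --- the existence of a smallest $1$-smooth convex majorant of $\sigma$, obtained cleanly as a closed-convex-hull (a morphological opening) on the conjugate side. No matching \emph{largest} minorant exists: already for $\sigma=\normE{\cdot}$ the minorants $M\sigma$ and $\ip{\zeta,\cdot}$ with $\zeta\in\bdry\Sigma$ are pairwise incomparable, so $f\le\sGen{f}{\sigma}$ cannot be argued the same way. Instead it hinges on first pinning the value and gradient of $f$ at the origin through the equality case of the distance bound and the uniqueness of the minimizer defining $\cCenter{\sigma}$ and $\cGap{\sigma}$, after which $1$-strong convexity of $f^*$ finishes. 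I expect verifying these pinning steps and the conjugate formula for $(\sGen{f}{\sigma})^*$ to be the delicate part; everything else is bookkeeping that mirrors, \emph{mutatis mutandis}, the conic statement of Theorem~\ref{thm: optimalConeSmoothing}.
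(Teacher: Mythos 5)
Your proposal is correct in substance but follows a genuinely different route from the paper. The paper never attacks the general smoothing problem directly: it first proves the inner-smoothing characterization (Theorem~\ref{thm: optimalSigmaInnerSmoothing}, via Lemmas~\ref{lem: bigCapSigmaInnerSmoothing} and~\ref{lem: lowCapSigmaInnerSmoothing}) and then derives Theorem~\ref{thm: optimalSigmaSmoothing} by showing the shift $f^{\mathrm{in}} \mapsto f^{\mathrm{in}} - \cGap{\sigma}/2$ is a surjection from optimal inner $1$-smoothings onto optimal general $1$-smoothings (claims (A) and (B) in Section~\ref{sec: conesProofs}). You instead work at the general level throughout: your lower bound $\dist{f,\sigma}\ge (f(0)+b)/2 \ge \cGap{\sigma}/2$ via core membership of the vertex datum $\left(-\nabla f(0),\, f(0)+b-\tfrac12\normE{\nabla f(0)}^2\right)$ is essentially the computation the paper performs inside Lemma~\ref{lem: lowCapSigmaInnerSmoothing} (Equation~\eqref{eq: focusGivesQuadUpperBoundAtZero}), shifted by $b$; your smallest-majorant argument on conjugates ($g^*$ $1$-strongly convex, $\phi \le \delta_{\partial\sigma(0)} - \tfrac12\normE{\cdot}^2$, largest closed convex minorant $=\price{\sigma}^*$) is the conjugate-side rendering of Lemma~\ref{lem: bigCapSigmaInnerSmoothing}'s epigraph argument via \cite[Proposition 12.60]{rockafellar2009}; and your equality-case pinning of $f(0)$ and $\nabla f(0)$ replaces the paper's surjectivity bookkeeping. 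Your route buys a self-contained proof of the general theorem without the inner/outer scaffolding; the paper's route buys Theorems~\ref{thm: optimalSigmaInnerSmoothing}, \ref{thm: optimalSigmaSmoothing}, and~\ref{thm: optimalSigmaOuterSmoothing} simultaneously from one set of lemmas.

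One step is misstated and needs a one-line patch. In the final bound you assert that $1$-strong convexity of $f^*$ gives $f^*(\zeta)\ge -\cGap{\sigma}/2+\tfrac12\normE{\zeta+\cCenter{\sigma}}^2=(\sGen{f}{\sigma})^*(\zeta)$ \emph{for all} $\zeta$, but by your own conjugate formula $(\sGen{f}{\sigma})^*$ carries the indicator $\delta_{\partial\sigma(0)}$, so the displayed equality fails off $\partial\sigma(0)$, where $(\sGen{f}{\sigma})^*=+\infty$; strong convexity alone cannot force $f^*=+\infty$ there, and without the indicator you only recover the quadratic upper bound $f(x)\le \tfrac12\normE{x}^2-\ip{\cCenter{\sigma},x}+\cGap{\sigma}/2$, not $f\le\sGen{f}{\sigma}$. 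The fix is already in your toolkit: optimality gives $f\le\sigma+\cGap{\sigma}/2$, hence $f^*\ge(\sigma+\cGap{\sigma}/2)^*=\delta_{\partial\sigma(0)}-\cGap{\sigma}/2$, so $\dom f^*\subseteq\partial\sigma(0)$ and the inequality $f^*\ge(\sGen{f}{\sigma})^*$ holds trivially off $\partial\sigma(0)$ and by strong convexity on it. (Similarly, in the smallest-majorant step you should note $\phi=g^*-\tfrac12\normE{\cdot}^2$ is closed, so comparing with the \emph{closed} convex hull $\price{\sigma}^*$ is legitimate.) With those insertions the argument is complete.
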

\begin{theorem}\label{thm: optimalConeSmoothing}
    A convex cone $K \subset \varspace$ with a nonempty interior is $\lambda$-smoothable if and only if $\lambda \geq \frac{\cGap{K}}{2 + \cGap{K}}$. In particular, for any target smoothness level $\beta > 0$, a $\beta$-smooth convex $S$ is an optimal $\beta$-smoothing of $K$ if and only if $\scaleOp{\sGen{s}{K}}{1/\beta} \subseteq S \subseteq \scaleOp{\sGen{S}{K}}{1/\beta}$.
\end{theorem}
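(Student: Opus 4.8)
The plan is to fix $\beta=1$ throughout (the general $\beta$ follows at the end by applying $\scaleOp{\cdot}{1/\beta}$ and Lemma~\ref{lem: scalingOperation}) and to recast the entire problem through one bijection. For a $1$-smooth convex $S$, the rolling-ball characterization \eqref{setSmoothnessEquivalence} says each boundary point is touched by a unit ball lying in $S$, so $S$ equals the union of the unit balls it contains, i.e. $S = C_S + B_1$ where $C_S := \{c : B(c,1)\subseteq S\} = S\ominus B_1$ (the erosion). Conversely $C+B_1$ is $1$-smooth for any closed convex $C$ by \cite[Lemma 9]{liu2024gauges}, and on closed convex sets the maps $C\mapsto C+B_1$ and $S\mapsto S\ominus B_1$ are mutually inverse and monotone. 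I would first note that finite Hausdorff distance forces $\hrzn{S}=K$ (a bounded ball does not change the horizon cone, and $S\subseteq K+B_d$, $K\subseteq S+B_d$ then pin the two horizons together), hence $\hrzn{C_S}=K$ as well.

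The heart of the argument is translating the two one-sided distances into clean conditions on $C_S$. Using $\core{K}=K\ominus B_1$ and the conic identity $K\ominus B_r = r\,\core{K}$ (immediate from $B(x,r)\subseteq K \iff (x/r)+B_1\subseteq K$), I would establish the erosion formula $(K+B_d)\ominus B_1 = (1-d)\core{K}$ for $0\le d<1$, whose nontrivial inclusion rests on a separating-hyperplane argument showing $A+B_d\subseteq K+B_d \Rightarrow A\subseteq K$. This yields, for $d<1$,
\begin{align*}
\onedist{S}{K}\le d \iff C_S\subseteq (1-d)\core{K}, \qquad \onedist{K}{S}\le d \iff K\subseteq C_S+B_{1+d}.
\end{align*}
A short computation with $d^\ast := \cGap{K}/(2+\cGap{K})$ gives $1-d^\ast = 1/(1+\cGap{K}/2)$ and $1+d^\ast = \normE{\cCenter{K}}/(1+\cGap{K}/2)$ (using $\normE{\cCenter{K}}=1+\cGap{K}$), and identifies $C_{\sGen{S}{K}} = (1-d^\ast)\core{K}$ and $C_{\sGen{s}{K}} = (1-d^\ast)\cCenter{K}+K$; the containment $\sGen{s}{K}\subseteq\sGen{S}{K}$ then reduces to $\cCenter{K}+K\subseteq\core{K}$, which holds because $\core{K}+K\subseteq\core{K}$.

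With these reformulations the lower bound is one line: if $\dist{S,K}=d<1$, applying the second equivalence at the vertex $0\in K$ gives $\dist{0,C_S}\le 1+d$, while the first gives $C_S\subseteq(1-d)\core{K}$ and hence $\dist{0,C_S}\ge(1-d)\normE{\cCenter{K}}=(1-d)(1+\cGap{K})$; combining yields $d\ge d^\ast$ (and $d\ge 1>d^\ast$ in the remaining case), so $K$ is $\lambda$-smoothable iff $\lambda\ge d^\ast$. For the characterization, if $S$ is optimal (so $\dist{S,K}=d^\ast$) the same two inequalities force $\dist{0,C_S}=1+d^\ast$ to be attained at the \emph{unique} min-norm point $(1-d^\ast)\cCenter{K}$ of $(1-d^\ast)\core{K}\supseteq C_S$; thus $(1-d^\ast)\cCenter{K}\in C_S$, and since $\hrzn{C_S}=K$ we get $(1-d^\ast)\cCenter{K}+K\subseteq C_S\subseteq(1-d^\ast)\core{K}$, i.e. $\sGen{s}{K}\subseteq S\subseteq\sGen{S}{K}$. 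Conversely, any $1$-smooth $S$ sandwiched between $\sGen{s}{K}$ and $\sGen{S}{K}$ has $\onedist{S}{K}\le\onedist{\sGen{S}{K}}{K}\le d^\ast$ from the first equivalence, and, using $S\supseteq\sGen{s}{K}$ together with the elementary estimate $\onedist{K}{c+K}\le\normE{c}$ for $c\in K$ (since $p+y\in c+K$ whenever $y\in K$ and $p\in c+K$, so the extra $B_1$ absorbs a unit of distance), also $\onedist{K}{S}\le d^\ast$; hence $\dist{S,K}\le d^\ast$, which with the lower bound gives optimality.

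I expect the main obstacle to be the reformulation step: proving the opening bijection, the erosion identity $(K+B_d)\ominus B_1=(1-d)\core{K}$, and carefully tracking the hypothesis $d<1$. Once the one-sided distances are expressed through $C_S$ and $\core{K}$, everything downstream — the lower bound, the smoothability threshold, and both directions of the sandwich characterization — collapses to one- or two-line estimates, and the projection-uniqueness of $\cCenter{K}$ onto $\core{K}$ (guaranteed by Lemma~\ref{lem: polyhedralExpressionOfCore}) is exactly what rigidifies the optimal $S$ into the stated interval.
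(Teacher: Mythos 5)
Your proof is correct, but it takes a genuinely different route from the paper. The paper first proves the inner-smoothing result (Theorem~\ref{thm: optimalConeInnerSmoothing}, via Lemmas~\ref{lem: bigCapConeInnerSmoothing} and~\ref{lem: lowCapConeInnerSmoothing}) and then derives Theorem~\ref{thm: optimalConeSmoothing} by showing the transformation $S \mapsto \scaleOp{S + B(0,r)}{1/(1+r)}$ with $r = \cGap{K}/2$ is a surjection from optimal inner $1$-smoothings onto optimal general $1$-smoothings, which requires the two bookkeeping claims (A) and (B) in Section~\ref{sec: conesProofs}. You instead attack the general case directly through an erosion calculus: after the bijection $S \leftrightarrow C_S$ with $S = C_S + B(0,1)$ (which is exactly the cited decomposition result from \cite{liu2024gauges}, so no new content is needed there), your identity $(K + B(0,d)) \ominus B(0,1) = (1-d)\core{K}$ for $d<1$ --- whose nontrivial inclusion is R\r{a}dstr\"om cancellation, a tool the paper never invokes --- converts the two one-sided distances into the clean equivalences $\onedist{S}{K}\le d \iff C_S \subseteq (1-d)\core{K}$ and $\onedist{K}{S}\le d \iff K \subseteq C_S + B(0,1+d)$, from which the threshold $d^\ast = \cGap{K}/(2+\cGap{K})$ falls out of a one-line computation, since $(1-d^\ast)(1+\cGap{K}) = 1+d^\ast$. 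Your rigidity step (the min-norm point of $C_S$ must coincide with the unique min-norm point $(1-d^\ast)\cCenter{K}$ of $(1-d^\ast)\core{K}$, then $\hrzn{C_S} = K$ via Lemma~\ref{lem: distToConeFixesHorizon} propagates this to $(1-d^\ast)\cCenter{K} + K \subseteq C_S$) is the same projection-uniqueness idea the paper uses at $P_S(0)$ inside Lemma~\ref{lem: lowCapConeInnerSmoothing}, just transported from $S$ to its erosion. What each approach buys: the paper's reduction handles the inner, general, and outer theorems uniformly from one core argument and makes the structural relations~\eqref{eq: reformulationOfOptSets} transparent, whereas your direct proof is self-contained for this theorem, avoids the surjectivity claims entirely, and makes the optimal constant and the sandwich $\scaleOp{\sGen{s}{K}}{1} \subseteq S \subseteq \scaleOp{\sGen{S}{K}}{1}$ emerge from the same pair of containment equivalences; the cost is the cancellation lemma and the care you correctly flag in tracking $d<1$ (handled by your case split, since $d^\ast < 1$ always). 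All the details I checked --- the identification $C_{\sGen{S}{K}} = (1-d^\ast)\core{K}$ and $C_{\sGen{s}{K}} = (1-d^\ast)\cCenter{K} + K$, the containment $\cCenter{K}+K \subseteq \core{K}$ via $\core{K}+K \subseteq \core{K}$, the absorption estimate giving $\onedist{K}{\sGen{s}{K}} \le d^\ast$, and the non-circularity of asserting $\dist{S,K} = d^\ast$ for optimal $S$ --- go through.
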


\subsubsection{Characterizations for Optimal Inner and Outer Smoothings}
\begin{definition}
    A convex function $f: \varspace \to \RR$ is an optimal inner $\beta$-smoothing of $g:\varspace \to \RR$ if $f$ is $\beta$-smooth, $f \geq g$, and $\dist{g, f} \leq \dist{g, \Tilde{f}}$ for all $\beta$-smooth, convex $\Tilde{f}: \varspace \to \RR$ satisfying $\Tilde{f} \geq g$. Analogously, a $\beta$-smooth convex $f \leq g$ is an optimal outer $\beta$-smoothing of $g$ if $\dist{g, f} \leq \dist{g, \Tilde{f}}$ for all $\beta$-smooth, convex $\Tilde{f} \leq g$. 
\end{definition}
\begin{definition}
    A closed convex set $S$ is an optimal inner $\beta$-smoothing of another set $C$ if $S$ is $\beta$-smooth, $S \subseteq C$, and  $\dist{C, S} \leq \dist{C, \Tilde{S}}$ for all $\beta$-smooth, convex $\Tilde{S} \subseteq \varspace$ satisfying $\Tilde{S} \subseteq C$. Analogously, a $\beta$-smooth, closed convex $S \supseteq C$ is an optimal outer $\beta$-smoothing of $C$ if $\dist{C, S} \leq \dist{C, \Tilde{S}}$ for all $\beta$-smooth, closed convex $\Tilde{S} \supseteq C$.
\end{definition}

For any sublinear $\sigma$, we find that all optimal inner $1$-smoothings of $\sigma$ lie between the following functions, defined as infimal convolutions
\begin{align}
\begin{matrix}
    \sIn{f}{\sigma} = \infConv{\left(r_\sigma + \sigma(\cdot - \cCenter{\sigma})\right)}{\frac{1}{2}\normE{\cdot}^2}, & & & &\fBgInner{\sigma} = \infConv{\price{\sigma}}{\frac{1}{2}\normE{\cdot}^2} .
\end{matrix}
\end{align}
For any convex cone $K$, we find that all optimal inner $1$-smoothings lie between the following sets, defined as Minkowski sums
\begin{align}
    \begin{matrix}
    \sIn{s}{K} = (x_K + K) + B(0, 1), & & & & \bgInner{K} = \core{K} + B(0, 1) . \label{coneInnerSmoothingFormula}
\end{matrix}
\end{align}
Theorems~\ref{thm: optimalSigmaInnerSmoothing}~and~\ref{thm: optimalConeInnerSmoothing} formalize this and are proven in Section~\ref{sec: conesProofs}.
\begin{theorem}\label{thm: optimalSigmaInnerSmoothing}
    Every sublinear function $\sigma$ is inner $\lambda$-smoothable if and only if $\lambda \geq \cGap{\sigma}$. In particular, for any target smoothness level $\beta > 0$, a $\beta$-smooth convex $f$ is an optimal inner $\beta$-smoothing of $\sigma$ if and only if $\scaleOp{\fBgInner{\sigma}}{1/\beta} \leq f \leq \scaleOp{\sIn{f}{\sigma}}{1/\beta}$.
\end{theorem}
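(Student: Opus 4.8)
The plan is to reduce everything to the smoothness level $\beta=1$ and then recast $1$-smooth inner smoothings through the Moreau-envelope representation, where the statement becomes an order-theoretic comparison. By Lemma~\ref{lem: scalingOperation} together with positive homogeneity of $\sigma$ (so that $\scaleOp{f}{\beta}\geq\sigma\iff f\geq\sigma$), a function $f$ is an optimal inner $\beta$-smoothing of $\sigma$ if and only if $\scaleOp{f}{\beta}$ is an optimal inner $1$-smoothing, which turns the claimed sandwich $\scaleOp{\fBgInner{\sigma}}{1/\beta}\leq f\leq\scaleOp{\sIn{f}{\sigma}}{1/\beta}$ into its $\beta=1$ form $\fBgInner{\sigma}\leq f\leq\sIn{f}{\sigma}$, so I work at $\beta=1$ throughout. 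The structural fact I would use is that a finite convex $f$ is $1$-smooth if and only if $f=\infConv{g}{\frac{1}{2}\normE{\cdot}^2}$ for a unique closed convex $g=(f^*-\frac{1}{2}\normE{\cdot}^2)^*$; this is the standard duality between $1$-smoothness of $f$ and $1$-strong convexity of $f^*$, and the map $g\mapsto\infConv{g}{\frac{1}{2}\normE{\cdot}^2}$ is an order isomorphism since under conjugacy it is addition of $\frac{1}{2}\normE{\cdot}^2$. Reusing the computation from the proof of Lemma~\ref{lem: coreSigmaIsPriceEpigraph} that $\price{\sigma}(y)=\sup_x \sigma(x)-\frac{1}{2}\normE{x-y}^2$, the inner constraint becomes elementary: $\infConv{g}{\frac{1}{2}\normE{\cdot}^2}\geq\sigma\iff g\geq\price{\sigma}$. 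Hence inner $1$-smoothings correspond exactly to closed convex $g\geq\price{\sigma}$, with $\fBgInner{\sigma}$ and $\sIn{f}{\sigma}$ the envelopes of the two distinguished generators $\price{\sigma}$ and $r_\sigma+\sigma(\cdot-\cCenter{\sigma})$; a one-line support-function estimate gives $\price{\sigma}\leq r_\sigma+\sigma(\cdot-\cCenter{\sigma})$, hence $\fBgInner{\sigma}\leq\sIn{f}{\sigma}$.

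For the smoothability constant and the reverse inclusion of the characterization, I would pin the optimal distance to $\cGap{\sigma}$. The lower bound is immediate, $\fBgInner{\sigma}(0)-\sigma(0)=\min_x\bigl[\price{\sigma}(x)+\frac{1}{2}\normE{x}^2\bigr]=\cGap{\sigma}$ by \eqref{focusFormula}, and the matching upper bound $\sIn{f}{\sigma}(x)-\sigma(x)\leq\cGap{\sigma}$ follows by writing $\sIn{f}{\sigma}(x)=r_\sigma+\frac{1}{2}\normE{x-\cCenter{\sigma}}^2-\frac{1}{2}\dist{x-\cCenter{\sigma},\partial\sigma(0)}^2$, bounding $\sigma(x)\geq\ip{P,x}$ for $P$ the projection of $x-\cCenter{\sigma}$ onto $\partial\sigma(0)$, and completing the square to reduce the claim to $\frac{1}{2}\normE{\cCenter{\sigma}+P}^2\geq0$. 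Because $g\mapsto\infConv{g}{\frac{1}{2}\normE{\cdot}^2}$ is monotone, any $1$-smooth $f$ with $\fBgInner{\sigma}\leq f\leq\sIn{f}{\sigma}$ satisfies $\sigma\leq f$ and $\dist{f,\sigma}\leq\cGap{\sigma}$, while $f\geq\fBgInner{\sigma}$ already forces $\dist{f,\sigma}\geq f(0)\geq\cGap{\sigma}$; this yields inner $\lambda$-smoothability exactly for $\lambda\geq\cGap{\sigma}$ and the implication "in the interval $\Rightarrow$ optimal."

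The main obstacle is the forward implication, that every optimal inner $1$-smoothing $f$ obeys $f\leq\sIn{f}{\sigma}$ (the lower bound $f\geq\fBgInner{\sigma}$ is automatic from $g\geq\price{\sigma}$); here I would pass to conjugates and exploit strong convexity. Optimality sandwiches $f$ at the origin, $\cGap{\sigma}=\dist{f,\sigma}\geq f(0)-\sigma(0)=f(0)\geq\fBgInner{\sigma}(0)=\cGap{\sigma}$, so $f(0)=\cGap{\sigma}$ and $\min_\zeta f^*(\zeta)=-f(0)=-\cGap{\sigma}$. The crucial step is to locate $-\cCenter{\sigma}$ as a minimizer of $f^*$: from $f\geq\fBgInner{\sigma}$ one has $f^*(-\cCenter{\sigma})\leq\fBgInner{\sigma}^*(-\cCenter{\sigma})$, and the first-order condition $-\cCenter{\sigma}\in\partial\price{\sigma}(\cCenter{\sigma})$ from \eqref{focusFormula} gives, through Fenchel equality, $\fBgInner{\sigma}^*(-\cCenter{\sigma})=\price{\sigma}^*(-\cCenter{\sigma})+\frac{1}{2}\normE{\cCenter{\sigma}}^2=-\cGap{\sigma}$; combined with $\min f^*=-\cGap{\sigma}$ this forces $f^*(-\cCenter{\sigma})=-\cGap{\sigma}$, so $0\in\partial f^*(-\cCenter{\sigma})$. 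Then $1$-strong convexity of $f^*$ about this minimizer yields $f^*(\zeta)\geq-\cGap{\sigma}+\frac{1}{2}\normE{\zeta+\cCenter{\sigma}}^2$ for all $\zeta$, and expanding the right-hand side identifies it with $(\sIn{f}{\sigma})^*$ on $\partial\sigma(0)$; using $\dom f^*\subseteq\partial\sigma(0)$ (which follows from $\dist{f,\sigma}<\infty$, since $f\leq\sigma+C$ forces $f^*=+\infty$ off $\partial\sigma(0)$) gives $f^*\geq(\sIn{f}{\sigma})^*$ everywhere, i.e.\ $f\leq\sIn{f}{\sigma}$. I expect the delicate point to be exactly this pinning of $-\cCenter{\sigma}$ as a minimizer of $f^*$ for \emph{every} optimal $f$, where optimality, the global inequality $f\geq\fBgInner{\sigma}$, and the first-order structure of $\cCenter{\sigma}$ must be combined.
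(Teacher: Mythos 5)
Your proposal is correct, and while its overall skeleton matches the paper's (reduce to $\beta=1$ via Lemma~\ref{lem: scalingOperation}; represent every $1$-smooth convex $f$ as $\infConv{g}{\frac{1}{2}\normE{\cdot}^2}$ via \cite[Proposition 12.60]{rockafellar2009} so that the inner constraint becomes $g\geq\price{\sigma}$, which is exactly how the paper's Lemma~\ref{lem: bigCapSigmaInnerSmoothing} obtains $f\geq\fBgInner{\sigma}$; and the same two distance computations pinning the optimal value at $\cGap{\sigma}$), your proof of the hard forward implication $f\leq\sIn{f}{\sigma}$ is a genuinely different, dual argument replacing the paper's primal one in Lemma~\ref{lem: lowCapSigmaInnerSmoothing}. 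The paper works with epigraphs: it invokes the horizon-cone fact (Lemma~\ref{lem: distToConeFixesHorizon}, $\epi\sigma=\hrzn{(\epi f)}$ since $\dist{f,\sigma}<\infty$) to write $\epi\sIn{f}{\sigma}=(\cCenter{\sigma},r_\sigma)+\epi\frac{1}{2}\normE{\cdot}^2+\hrzn{(\epi f)}$, then uses the quadratic upper bound \eqref{eq: smoothnessQuadBound} at the single point $0$ together with uniqueness of the minimizer in \eqref{focusFormula} to show $\cCenter{\sigma}=-\nabla f(0)$ and $r_\sigma=f(0)-\frac{1}{2}\normE{\nabla f(0)}^2$, whence the shifted paraboloid sits inside $\epi f$. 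You instead pass to conjugates: the identities $f^*(-\cCenter{\sigma})\leq\fBgInner{\sigma}^*(-\cCenter{\sigma})=\price{\sigma}^*(-\cCenter{\sigma})+\frac{1}{2}\normE{\cCenter{\sigma}}^2=-\cGap{\sigma}=\min f^*$ (using $-\cCenter{\sigma}\in\partial\price{\sigma}(\cCenter{\sigma})$ from \eqref{focusFormula} and Fenchel equality) pin $-\cCenter{\sigma}$ as the minimizer of $f^*$ — the dual rendering of the paper's $\nabla f(0)=-\cCenter{\sigma}$ — after which $1$-strong convexity of $f^*$ and the domain inclusion $\dom f^*\subseteq\partial\sigma(0)$ (your substitute for the horizon-cone lemma, following directly from $f\leq\sigma+\cGap{\sigma}$) give $f^*\geq(\sIn{f}{\sigma})^*$ and hence $f\leq\sIn{f}{\sigma}$ by biconjugation; I checked the conjugate computation $(\sIn{f}{\sigma})^*(\zeta)=\frac{1}{2}\normE{\zeta+\cCenter{\sigma}}^2-\cGap{\sigma}+\iota$ on $\partial\sigma(0)$ and it is right. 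What each approach buys: your dual route makes the whole interval structure order-theoretic (both extremal smoothings become conjugates differing by adding $\frac{1}{2}\normE{\cdot}^2$ to generators above $\price{\sigma}$) and avoids recession-cone machinery, arguably yielding a cleaner self-contained functional proof; the paper's primal route has the advantage of running in exact parallel with the conic case (Lemmas~\ref{lem: bigCapConeInnerSmoothing} and~\ref{lem: lowCapConeInnerSmoothing}), where no conjugacy is available, preserving the function/set symmetry the paper emphasizes.
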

\begin{theorem}\label{thm: optimalConeInnerSmoothing}
    Every convex cone $K \subset \varspace$ with a nonempty interior is inner $\lambda$-smoothable if and only if $\lambda \geq \cGap{K}$. In particular, for any target smoothness level $\beta > 0$, a $\beta$-smooth convex $S$ is an optimal inner $\beta$-smoothing of $K$ if and only if $\scaleOp{\sIn{s}{K}}{1/\beta} \subseteq S \subseteq \scaleOp{\bgInner{K}}{1/\beta}$.
\end{theorem}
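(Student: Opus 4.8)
The plan is to reduce to $\beta=1$ and then transport everything by scaling. By Lemma~\ref{lem: scalingOperation}, the dilation $S\mapsto\scaleOp{S}{\beta}=\beta S$ sends $\beta$-smooth convex sets to $1$-smooth ones, multiplies $\dist{\cdot,K}$ by $\beta$, and (being a positive dilation) preserves set inclusions and the property $S\subseteq K=\beta K$; hence $S$ is an optimal inner $\beta$-smoothing of $K$ iff $\beta S$ is an optimal inner $1$-smoothing, and the claimed sandwich $\scaleOp{\sIn{s}{K}}{1/\beta}\subseteq S\subseteq\scaleOp{\bgInner{K}}{1/\beta}$ is the $\beta=1$ sandwich rescaled. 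So I fix $\beta=1$. First I would record consequences of Lemma~\ref{lem: polyhedralExpressionOfCore}: $\core{K}$ is nonempty, closed, and convex; every unit $\zeta\in N_K(0)$ obeys $\ip{\zeta,x}\le-1$ on $\core{K}$, so $\normE{\cCenter{K}}\ge1$ and $\cGap{K}=\normE{\cCenter{K}}-1\ge0$; the optimality condition for $\cCenter{K}=P_{\core{K}}(0)$ writes $-\cCenter{K}$ as a nonnegative combination of active $\zeta\in N_K(0)$, placing $\cCenter{K}$ in the dual cone $\set{w\mid\ip{w,y}\ge0\ \forall y\in K}$ and giving $\ip{\cCenter{K},y}\ge\normE{\cCenter{K}}^2$ for all $y\in\core{K}$; and $\core{K}+K=\core{K}$, since $y\in\core{K},k\in K$ imply $(y+k)+B(0,1)=(y+B(0,1))+k\subseteq K+K=K$.

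Next I would verify the two endpoints are genuine optimal smoothings of value $\cGap{K}$. Both $\sIn{s}{K}=(\cCenter{K}+K)+B(0,1)$ and $\bgInner{K}=\core{K}+B(0,1)$ are Minkowski sums of a closed convex set with $B(0,1)$, hence $1$-smooth by \cite[Lemma 9]{liu2024gauges}, and both lie in $K$ (using $\cCenter{K}\in\core{K}$ and $K+K=K$). Since $\dist{x,D+B(0,1)}=(\dist{x,D}-1)_+$, their distances to $K$ reduce to computing $\sup_{x\in K}\dist{x,\core{K}}$ and $\sup_{x\in K}\dist{x,\cCenter{K}+K}$. For any $x\in K$ the point $x+\cCenter{K}$ lies in $\core{K}$ (resp.\ in $\cCenter{K}+K$), so each distance is $\le\normE{\cCenter{K}}$; and at $x=0$ the value $\normE{\cCenter{K}}$ is attained, using $\cCenter{K}=P_{\core{K}}(0)$ and, for the translate, $0=P_K(-\cCenter{K})$, which holds because $\cCenter{K}$ is in the dual cone. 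Thus $\dist{K,\bgInner{K}}=\dist{K,\sIn{s}{K}}=\normE{\cCenter{K}}-1=\cGap{K}$.

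Now I would prove the universal upper containment together with the matching lower bound. For any $1$-smooth convex inner smoothing $S$, at each boundary point $s$ with unit normal $\zeta\in N_S(s)$ the rolling-ball characterization \eqref{setSmoothnessEquivalence} gives $B(s-\zeta,1)\subseteq S\subseteq K$, so $s-\zeta\in\core{K}$ and $s\in\core{K}+B(0,1)=\bgInner{K}$; as $\bgInner{K}$ is closed, convex, and has recession cone containing $K=\hrzn{S}$, taking the closed convex hull of $\bdry S$ together with these recession directions yields $S\subseteq\bgInner{K}$. Because $0\in K$ and $S\subseteq\bgInner{K}$, monotonicity of distance gives $\dist{K,S}\ge\dist{0,S}\ge\dist{0,\bgInner{K}}=\cGap{K}$. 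Matched by the endpoints, this shows $\cGap{K}$ is the optimal value at $\beta=1$, that $\sIn{s}{K}$ and $\bgInner{K}$ are optimal, and---after scaling---that $K$ is inner $\lambda$-smoothable iff $\lambda\ge\cGap{K}$. The easy half of the characterization is then immediate: if $\sIn{s}{K}\subseteq S\subseteq\bgInner{K}$ with $S$ $1$-smooth convex, then $S\subseteq K$ and $\cGap{K}\le\dist{K,S}\le\dist{K,\sIn{s}{K}}=\cGap{K}$, so $S$ is optimal.

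The remaining, and hardest, step is that every optimal $S$ satisfies $\sIn{s}{K}\subseteq S$. Since $S$ is at finite Hausdorff distance from the cone $K$, its horizon cone is $\hrzn{S}=K$, whence $S+K=S$; writing $\sIn{s}{K}=B(\cCenter{K},1)+K$, it suffices to prove $B(\cCenter{K},1)\subseteq S$. As $0\in K$, optimality gives $\dist{0,S}\le\dist{K,S}=\cGap{K}$, while $S\subseteq\bgInner{K}$ gives $\dist{0,S}\ge\cGap{K}$; hence $P_S(0)$ is a point of $\bgInner{K}$ at minimal distance $\cGap{K}$ from $0$, so by uniqueness of projection $P_S(0)=P_{\bgInner{K}}(0)=\cCenter{K}(1-1/\normE{\cCenter{K}})$, the last value computed from \eqref{eqn: ballPlusSetProjection}. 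The inequality $\ip{\cCenter{K},y}\ge\normE{\cCenter{K}}^2$ on $\core{K}$ shows the hyperplane $\set{x\mid\ip{\cCenter{K},x}=\normE{\cCenter{K}}(\normE{\cCenter{K}}-1)}$ supports $\bgInner{K}$, hence supports the smaller set $S$, at the shared boundary point $P_S(0)$; thus $-\cCenter{K}/\normE{\cCenter{K}}$ is a unit outer normal of $S$ there, and \eqref{setSmoothnessEquivalence} produces a unit ball in $S$ centered at $P_S(0)+\cCenter{K}/\normE{\cCenter{K}}=\cCenter{K}$, i.e.\ $B(\cCenter{K},1)\subseteq S$. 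With $S+K=S$ this gives $\sIn{s}{K}=B(\cCenter{K},1)+K\subseteq S$, completing the sandwich. I expect this last paragraph to be the main obstacle: establishing $\hrzn{S}=K$ and the invariance $S+K=S$, and then pinning $P_S(0)$ to the projection onto $\bgInner{K}$ so that the supporting hyperplane and \eqref{setSmoothnessEquivalence} can be applied at exactly the right point to recover the central ball $B(\cCenter{K},1)$ (the degenerate case $\cGap{K}=0$, where $P_S(0)=0$, being covered by the same supporting-hyperplane argument).
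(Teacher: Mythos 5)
Your proof is correct, and it shares the paper's overall architecture --- reduce to $\beta=1$, show $\bgInner{K}$ majorizes every inner $1$-smoothing while $\sIn{s}{K}$ is contained in every optimal one, then sandwich --- but both key steps are executed by genuinely different mechanisms. For maximality, the paper's Lemma~\ref{lem: bigCapConeInnerSmoothing} invokes \cite[Proposition 3]{liu2024gauges} to write any $1$-smooth $S$ as $C+B(0,1)$ and deduces $C\subseteq\core{K}$; you instead apply the rolling-ball characterization \eqref{setSmoothnessEquivalence} at boundary points to get $\bdry S\subseteq \core{K}+B(0,1)$ and recover all of $S$ from its boundary plus recession directions. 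That last step deserves one more line, since $\mathrm{conv}(\bdry S)$ alone need not contain $S$ (a halfspace): because $\hrzn{S}=K\neq\varspace$, for any $x\in\interior S$ and $u\in\interior K$ the ray $x-tu$ must exit $S$, so $x\in\bdry S+K\subseteq\core{K}+B(0,1)+K=\bgInner{K}$, using $\core{K}+K=\core{K}$. For minimality, the paper's Lemma~\ref{lem: lowCapConeInnerSmoothing} takes a unit normal $\zeta\in N_S(P_S(0))$ aligned with $-P_S(0)$, places $P_S(0)-\zeta$ in $\core{K}$, and pins it to $\cCenter{K}$ via a norm chain and uniqueness of projection onto $\core{K}$; you instead pin $P_S(0)=P_{\bgInner{K}}(0)=(1-1/\normE{\cCenter{K}})\cCenter{K}$ using the containment $S\subseteq\bgInner{K}$, the distance equalities, and \eqref{eqn: ballPlusSetProjection}, then manufacture the normal $-\cCenter{K}/\normE{\cCenter{K}}$ from an explicit supporting hyperplane of $\bgInner{K}$ and roll the ball there. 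Your route avoids having to select the correct normal at $P_S(0)$ and covers the degenerate case $\cGap{K}=0$ uniformly, at the price of leaning on the maximality containment and on $\cCenter{K}$ lying in the dual cone of $K$. On that last point, your multiplier representation of $-\cCenter{K}$ over the infinite family of halfspaces defining $\core{K}$ requires a constraint qualification (or compactness of the index set plus Carath\'eodory); it is cleaner to note that the projection inequality $\ip{\cCenter{K},y}\geq\normE{\cCenter{K}}^2$ on $\core{K}$ combined with $\core{K}+K=\core{K}$ (apply it to $y+tk$ and let $t\to\infty$) gives $\ip{\cCenter{K},k}\geq 0$ for all $k\in K$ directly, which is exactly what $P_K(-\cCenter{K})=0$ needs. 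Neither point is a gap; both are one-line repairs, and your exact computation $\dist{\sIn{s}{K},K}=\dist{\bgInner{K},K}=\cGap{K}$ (versus the paper's one-sided bounds) is a harmless strengthening.
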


Similarly, the set of optimal outer $1$-smoothings of $\sigma$ lies between the following functions
\begin{align}
\begin{matrix}
    \sOut{f}{\sigma} = \left[\infConv{\left(r_\sigma + \sigma(\cdot - \cCenter{\sigma})\right)}{\frac{1}{2}\normE{\cdot}^2}\right] - \cGap{\sigma}, & & & &\fBgOuter{\sigma} = \left[\infConv{\price{\sigma}}{\frac{1}{2}\normE{\cdot}^2}\right] - \cGap{\sigma} ,
\end{matrix}
\end{align}
and the set of optimal outer $1$-smoothings of $K$ lies between the following sets
\begin{align}
    \begin{matrix}
    \smOuter{K} = \frac{1}{1 + \cGap{K}}\left[(\cCenter{K} + K) + B(0, 1 + \cGap{K})\right], & & & & \bgOuter{K} = \frac{1}{1 + \cGap{K}}\left[\core{K} + B(0, 1 + \cGap{K})\right].
\end{matrix}
\end{align}
Theorems~\ref{thm: optimalSigmaOuterSmoothing}~and~\ref{thm: optimalConeOuterSmoothing} formalize this and are proven in Section~\ref{sec: conesProofs}.
\begin{theorem}\label{thm: optimalSigmaOuterSmoothing}
    Every sublinear function $\sigma$ is outer $\lambda$-smoothable if and only if $\lambda \geq \cGap{\sigma}$. In particular, for any target smoothness level $\beta > 0$, a $\beta$-smooth convex $f$ is an optimal outer $\beta$-smoothing if and only if $\scaleOp{\fBgOuter{\sigma}}{1/\beta} \leq f \leq \scaleOp{\sOut{f}{\sigma}}{1/\beta}$.
\end{theorem}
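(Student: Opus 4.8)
The plan is to reduce the entire statement to the already-established inner characterization, Theorem~\ref{thm: optimalSigmaInnerSmoothing}, by a vertical shift, after first passing to the normalized level $\beta = 1$. For that first reduction I would invoke Lemma~\ref{lem: scalingOperation}: the map $f \mapsto \scaleOp{f}{\beta}$ is a bijection from $\beta$-smooth onto $1$-smooth convex functions that multiplies $\dist{f, \sigma}$ by $\beta$, preserves the pointwise order, and preserves outer-ness, since $\scaleOp{f}{\beta} \leq \sigma$ is equivalent to $f \leq \sigma$ by sublinearity of $\sigma$; applying its inverse $\scaleOp{\cdot}{1/\beta}$ to the $\beta = 1$ conclusion then produces the stated sandwich. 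The structural fact that makes the shift work is visible directly from the definitions: the extremal outer functions are the extremal inner functions shifted down by the constant $\cGap{\sigma}$, namely $\sOut{f}{\sigma} = \sIn{f}{\sigma} - \cGap{\sigma}$ and $\fBgOuter{\sigma} = \fBgInner{\sigma} - \cGap{\sigma}$. Since a constant shift preserves convexity and $1$-smoothness, all feasibility and distance facts can be transported through Theorem~\ref{thm: optimalSigmaInnerSmoothing}.

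For the smoothability threshold and the lower bound at $\beta = 1$, I would take an arbitrary $1$-smooth convex outer smoothing $f \leq \sigma$ and set $d := \dist{f, \sigma} = \sup_x(\sigma(x) - f(x)) < \infty$. Then $f + d \geq \sigma$ is a $1$-smooth convex inner smoothing, so Theorem~\ref{thm: optimalSigmaInnerSmoothing} forces $\dist{f + d, \sigma} = \sup_x(f(x) + d - \sigma(x)) \geq \cGap{\sigma}$; as $\inf_x(\sigma(x) - f(x)) \geq 0$, this rearranges to $d \geq \cGap{\sigma}$. For the matching upper bound, $\scaleOp{\sOut{f}{\sigma}}{1/\beta}$ is a $\beta$-smooth outer smoothing attaining $\dist = \cGap{\sigma}/\beta$; here its outer-ness $\sOut{f}{\sigma} \leq \sigma$ follows from $\sIn{f}{\sigma} \geq \sigma$ together with the inner distance identity $\sup(\sIn{f}{\sigma} - \sigma) = \cGap{\sigma}$. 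These two facts give that $\sigma$ is outer $\lambda$-smoothable exactly when $\lambda \geq \cGap{\sigma}$.

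For the characterization of optimal outer smoothings I would argue both inclusions through the shift $g := f + \cGap{\sigma}$. If $f$ is an optimal outer $1$-smoothing, then $d = \cGap{\sigma}$ and the inequality chain above collapses, forcing $\inf_x(\sigma(x) - f(x)) = 0$ and hence $\dist{g, \sigma} = \cGap{\sigma}$; thus $g$ is an optimal inner $1$-smoothing, and Theorem~\ref{thm: optimalSigmaInnerSmoothing} yields $\fBgInner{\sigma} \leq g \leq \sIn{f}{\sigma}$, i.e.\ $\fBgOuter{\sigma} \leq f \leq \sOut{f}{\sigma}$. For the reverse inclusion, suppose $f$ is $1$-smooth convex with $\fBgOuter{\sigma} \leq f \leq \sOut{f}{\sigma}$. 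The inner distance identities $\sup(\sIn{f}{\sigma} - \sigma) = \sup(\fBgInner{\sigma} - \sigma) = \cGap{\sigma}$ (with $\sIn{f}{\sigma}, \fBgInner{\sigma} \geq \sigma$) show that both extremal functions are themselves outer; in particular $f \leq \sOut{f}{\sigma} \leq \sigma$, so $f$ is outer, and $\sup(\sigma - f) \leq \sup(\sigma - \fBgOuter{\sigma}) \leq \cGap{\sigma}$. Combined with the lower bound $\sup(\sigma - f) \geq \cGap{\sigma}$, this gives $\dist{f, \sigma} = \cGap{\sigma}$, so $f$ is optimal. The main thing to watch throughout is bookkeeping the direction of the distance for outer approximations (it is $\sup(\sigma - f)$, not $\sup(f - \sigma)$) and verifying that the single shift by $\cGap{\sigma}$ lands precisely on an \emph{optimal} inner smoothing rather than merely a feasible one; once the inner lower bound pins $\inf(\sigma - f) = 0$ at optimality, no genuinely new estimate is needed beyond Theorem~\ref{thm: optimalSigmaInnerSmoothing}.
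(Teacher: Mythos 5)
Your proposal is correct and takes essentially the same route as the paper: the paper's proof also reduces to the inner characterization (Theorem~\ref{thm: optimalSigmaInnerSmoothing}) via the vertical shift by $\cGap{\sigma}$, establishing through its claims (A) and (B) that $f^{\mathrm{in}} \mapsto f^{\mathrm{in}} - r$ surjects optimal inner $1$-smoothings onto optimal outer $1$-smoothings at $r = \cGap{\sigma}$, and then invokes the identities $\sOut{f}{\sigma} = \sIn{f}{\sigma} - \cGap{\sigma}$, $\fBgOuter{\sigma} = \fBgInner{\sigma} - \cGap{\sigma}$ from~\eqref{eq: reformulationOfOptFunctions} together with Lemma~\ref{lem: scalingOperation}. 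The only cosmetic difference is that the paper phrases the shift bookkeeping parametrically in $r$ so as to prove Theorem~\ref{thm: optimalSigmaSmoothing} (with $r = \cGap{\sigma}/2$) simultaneously, whereas you specialize directly to the outer case.
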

\begin{theorem}\label{thm: optimalConeOuterSmoothing}
    Every convex cone $K \subset \varspace$ with a nonempty interior is outer $\lambda$-smoothable if and only if $\lambda \geq \frac{\cGap{K}}{1 + \cGap{K}}$. In particular, for any target smoothness level $\beta > 0$, a $\beta$-smooth convex $S$ is an optimal outer $\beta$-smoothing of $K$ if and only if $\scaleOp{\smOuter{K}}{1/\beta} \subseteq S \subseteq \scaleOp{\bgOuter{K}}{1/\beta}$.
\end{theorem}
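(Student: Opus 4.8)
The plan is to fix $\beta = 1$ throughout, since Lemma~\ref{lem: scalingOperation} reduces the general statement to this case: $\scaleOp{\cdot}{1/\beta}$ is a containment-preserving bijection that maps $1$-smooth sets to $\beta$-smooth sets, preserves the outer property because $\frac{1}{\beta}K = K$, and scales Hausdorff distance by $1/\beta$. Write $\eta \defeq 1 + \cGap{K} = \normE{\cCenter{K}}$, $v \defeq \cCenter{K}/\eta$ (so $\normE{v}=1$), and $t^\star \defeq \frac{\cGap{K}}{1+\cGap{K}}$. I would first record three structural facts. (i) A closed convex $S$ is $1$-smooth if and only if $S = \core{S} + B(0,1)$, where $\core{S} \defeq \set{x \mid x + B(0,1)\subseteq S}$; the ``if'' direction is the Minkowski-sum smoothness behind \eqref{setSmoothnessEquivalence} and \cite[Lemma 9]{liu2024gauges}, and the ``only if'' direction follows from the inscribed-ball characterization \eqref{setSmoothnessEquivalence}. (ii) Because $K$ is a cone, $\frac{1}{\eta}\core{K} = \set{x \mid x + B(0,1/\eta)\subseteq K}$, and the two extremal sets simplify to $\smOuter{K} = v + K + B(0,1)$ and $\bgOuter{K} = \frac{1}{\eta}\core{K} + B(0,1)$, with $\smOuter{K}\subseteq\bgOuter{K}$ following from $\cCenter{K} + K\subseteq\core{K}$. (iii) By Lemma~\ref{lem: polyhedralExpressionOfCore}, $\cCenter{K}$ is the \emph{unique} minimum-norm point of $\core{K}$, and the largest $r$ with $B(\cCenter{K},r)\subseteq K$ equals $1$; by conic scaling the largest $r$ with $B(v,r)\subseteq K$ is then $1/\eta$.

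Next I would prove matching bounds on the optimal distance. For the lower bound, let $S$ be any $1$-smooth convex set with $S\supseteq K$ and put $t \defeq \dist{S,K}$; if $t\ge 1$ then $t\ge t^\star$ trivially, so assume $t<1$. Since $0\in K\subseteq S = \core{S}+B(0,1)$, there is $c_0\in\core{S}$ with $\normE{c_0}\le 1$. From $S\subseteq K + B(0,t)$ and the erosion/dilation identity $\set{x\mid x+B(0,1)\subseteq K+B(0,t)} = \set{x\mid x+B(0,1-t)\subseteq K}$ one gets $c_0/(1-t)\in\core{K}$, hence $\normE{c_0}\ge(1-t)\eta$ by (iii); combining with $\normE{c_0}\le1$ yields $t\ge t^\star$. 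This shows no outer smoothing beats $t^\star$ and (via Lemma~\ref{lem: scalingOperation}) that $K$ is outer $\lambda$-smoothable only if $\lambda\ge t^\star$. For the upper bound I would show $\dist{\bgOuter{K},K}\le t^\star$ (whence $\dist{\smOuter{K},K}\le t^\star$ too): each $c\in\frac{1}{\eta}\core{K}$ has $B(c,1/\eta)\subseteq K$, so for $\normE{u}\le1$, $\dist{c+u,K}\le(\normE{u}-1/\eta)_+\le t^\star$. That $K\subseteq\smOuter{K}$ is immediate from $\dist{y-v,K}\le\normE{v}=1$ for $y\in K$, so these are genuine outer smoothings.

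It remains to characterize the optimal smoothings, and here is where I expect the real work. Let $S$ be $1$-smooth convex with $K\subseteq S$ and $\dist{S,K}\le t^\star$. The inclusion $S\subseteq\bgOuter{K}$ is the easy half: the same erosion/dilation identity gives $\core{S}\subseteq\set{x\mid x+B(0,1/\eta)\subseteq K} = \frac{1}{\eta}\core{K}$, so $S = \core{S}+B(0,1)\subseteq\bgOuter{K}$. The reverse inclusion $\smOuter{K}\subseteq S$ is the main obstacle. My plan is to re-examine the lower-bound argument at equality: the estimates $\normE{c_0}\le 1$ and $\normE{c_0}\ge(1-t)\eta\ge(1-t^\star)\eta = 1$ force $\normE{c_0}=1$ and make $\eta c_0$ a minimum-norm element of $\core{K}$, so by the uniqueness in (iii) $\eta c_0 = \cCenter{K}$, i.e. $v = c_0\in\core{S}$. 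I would then argue that the recession cone of $\core{S}$ contains $K$ (erosion does not change the recession cone, and $\mathrm{rec}(S)\supseteq\mathrm{rec}(K)=K$ since $S\supseteq K$), so $v\in\core{S}$ upgrades to $v+K\subseteq\core{S}$, giving $\smOuter{K} = v+K+B(0,1)\subseteq\core{S}+B(0,1) = S$.

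Finally I would assemble the equivalences: the bounds above show the minimum distance over $1$-smooth outer smoothings is exactly $t^\star$, so $S$ is optimal iff $K\subseteq S$ and $\dist{S,K}=t^\star$; the previous paragraph then places any such $S$ in the sandwich $\smOuter{K}\subseteq S\subseteq\bgOuter{K}$, while conversely any $1$-smooth $S$ in this sandwich is outer (as $\smOuter{K}\supseteq K$) with $\dist{S,K}\le\dist{\bgOuter{K},K}=t^\star$ by monotonicity of Hausdorff distance for nested outer sets, hence optimal. Rescaling through Lemma~\ref{lem: scalingOperation} delivers both the threshold $\lambda\ge\frac{\cGap{K}}{1+\cGap{K}}$ and the per-$\beta$ characterization. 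The points I would handle most carefully are the summand characterization (i) and the equality analysis forcing $v\in\core{S}$, since the lower inclusion rests entirely on the uniqueness of the minimum-norm core point.
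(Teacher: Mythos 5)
Your proof is correct, but it takes a genuinely different route from the paper's. The paper never analyzes outer smoothings directly: it first establishes the inner characterization (Theorem~\ref{thm: optimalConeInnerSmoothing}, via Lemmas~\ref{lem: bigCapConeInnerSmoothing} and~\ref{lem: lowCapConeInnerSmoothing}) and then proves two claims, (A) and (B), showing that $S \mapsto \scaleOp{S + B(0,r)}{1/(1+r)}$ is a surjection from optimal inner $1$-smoothings onto optimal outer $1$-smoothings at $r = \cGap{K}$, so Theorem~\ref{thm: optimalConeOuterSmoothing} falls out of the inner theorem together with the reformulations~\eqref{eq: reformulationOfOptSets}; the same pair of claims at $r = \cGap{K}/2$ yields Theorem~\ref{thm: optimalConeSmoothing} simultaneously. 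You instead prove the outer statement from scratch, with the erosion/dilation identity $\set{x \mmid x + B(0,1) \subseteq K + B(0,t)} = \set{x \mmid x + B(0,1-t) \subseteq K}$ (valid for closed convex $K$ and $t \leq 1$, e.g.\ via support functions) playing the role of the paper's claim (B): it peels the outer slack off $S$ and lands the canonical summand $\core{S} \defeq \set{x \mmid x + B(0,1) \subseteq S}$ inside $\frac{1}{1+\cGap{K}}\core{K}$, which delivers both your distance lower bound and the upper inclusion $S \subseteq \bgOuter{K}$. Your lower inclusion rests on an equality analysis at the origin ($\normE{c_0}\leq 1$ against $\normE{c_0}\geq(1-t)(1+\cGap{K})$, with uniqueness of the minimum-norm point of $\core{K}$ forcing $c_0 = \cCenter{K}/(1+\cGap{K})$) plus a recession-cone upgrade; both steps are sound, since $K \subseteq \hrzn{S}$ follows from $K \subseteq S$ with $0 \in K$ (or from Lemma~\ref{lem: distToConeFixesHorizon}), and erosion preserves recession directions because $(x+w)+\alpha u \in S$ for all $\normE{w}\leq 1$. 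Notably, your closing mechanism parallels the paper's proof of Lemma~\ref{lem: lowCapConeInnerSmoothing}, which also pins down a point of $\core{K}$ by uniqueness of projections, but there applied at $P_S(0)$ with a unit normal rather than at the origin with the erosion. The trade-off: the paper's transformation argument is more economical, dispatching the general and outer theorems with one pair of claims, whereas your direct argument covers only the outer case as written (for Theorem~\ref{thm: optimalConeSmoothing} the smoothing need not contain $K$, so the one-sided erosion step does not apply directly), but it is self-contained and makes the geometry of the sandwich explicit. One minor point to tighten: deriving your structural fact (i) (that a $1$-smooth closed convex $S$ equals its erosion plus $B(0,1)$) from \eqref{setSmoothnessEquivalence} alone needs a short extra argument for points of $S$ lying at depth less than one from the boundary; citing \cite[Proposition 3]{liu2024gauges} directly, as the paper does throughout, is cleaner.
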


\subsubsection{Two Remarks on Optimal Smoothings}
First, we explicitly highlight the parallels between the optimal function and set smoothings.
Recall that (see \cite[Excercise 1.28]{rockafellar2009}) strong convexity of $f + \frac{1}{2}\normE{\cdot}^2$ guarantees that
\begin{equation}\label{epiAddition}
        \epi \infConv{f}{\frac{1}{2}\normE{\cdot}^2} = \epi f + \epi \frac{1}{2}\normE{\cdot}^2 \quad \text{for all convex } f: \varspace \to \RR .
\end{equation}
Noting that $\epi \sigma$ is a convex cone, \eqref{epiAddition} reveals the following symmetry between the optimal inner smoothings of sublinear functions and convex cones. Observe that
\begin{align*}
    \epi \sIn{f}{\sigma} & = ((\cCenter{\sigma}, r_\sigma) + \epi \sigma) + \epi \frac{1}{2}\normE{\cdot}^2, \quad \quad \quad \quad \quad \: \epi \fBgInner{\sigma} = \core{\sigma} \: + \: \epi \frac{1}{2}\normE{\cdot}^2,\\
    \sIn{s}{K} &= \qquad (x_K + K) \quad  \: + B(0, 1), \qquad \qquad \qquad \qquad \bgInner{K} = \core{K} + B(0, 1) .
\end{align*}
The other optimal sets are generated from these inner smoothings. For functions, a simple translation gives the other optimal smoothings, as
\begin{align}\label{eq: reformulationOfOptFunctions}
\begin{matrix}
    \sGen{f}{\sigma} = \sIn{f}{\sigma} - \cGap{\sigma}/2, & & & & \sGen{F}{\sigma}  = \fBgInner{\sigma} - \cGap{\sigma}/2, \\
    & & & & \\
    \sOut{f}{\sigma} = \sIn{f}{\sigma} - \cGap{\sigma}, & & & &\fBgOuter{\sigma} = \fBgInner{\sigma} - \cGap{\sigma} .
\end{matrix}
\end{align}
The parallel transformation for sets is a Minkowski addition together with a rescaling, giving
\begin{align}\label{eq: reformulationOfOptSets}
    \begin{matrix}
    \sGen{s}{K} = \frac{1}{1 + \frac{\cGap{K}}{2}}\left[\sIn{s}{K} + B(0, \frac{\cGap{K}}{2})\right], & & & & \sGen{S}{K}= \frac{1}{1 + \frac{\cGap{K}}{2}}\left[\bgInner{K} + B(0, \frac{\cGap{K}}{2})\right], \\
    & & & & \\
    \smOuter{K} = \frac{1}{1 + \cGap{K}}\left[\sIn{s}{K} + B(0, \cGap{K})\right], & & & & \bgOuter{K} = \frac{1}{1 + \cGap{K}}\left[\bgInner{K} + B(0, \cGap{K})\right].
\end{matrix}
\end{align}
The reformulations in \eqref{eq: reformulationOfOptFunctions} and \eqref{eq: reformulationOfOptSets} are easy to verify from definitions.

Second, we note that our theory provides an exact characterization of when a unique optimal smoothing exists.
For both the functional and conic setting, all the smoothings are unique if and only if the core is the translation of the original with the origin placed at the center:
\begin{align}
    & \sigma \text{ has unique smoothings} \iff \price{\sigma} = r_\sigma + \sigma(\cdot - \cCenter{\sigma}) \label{uniquenessOfSigmaSmoothings}, \\
    & K \text{ has unique smoothings} \iff \core{K} = \cCenter{K} + K. \label{uniquenessOfConeSmoothings}
\end{align}

\subsection{Some Families of Optimal Smoothings}\label{subsec: examples}
Before proving our main theorems in the following subsection, here we apply them to several important, well-studied nonsmooth functions and sets. First, applying our theory to sublinear functions, we present the optimal smoothings of the ReLU function, an optimal smoothing of any norm based on its Moreau envelope (not necessarily unique), and optimal, computationally tractable smoothings of the maximum and maximum eigenvalue functions. Second, applying our theory to convex cones, we present the unique optimal smoothings of the nonnegative, second-order, and semidefinite cone and numerically identify the set of all optimal smoothings of the exponential cone.

For these examples, the following lemma is useful, giving a formula for the infimal convolution with $\frac{1}{2}\normE{\cdot}^2$ (i.e., the Moreau envelope) of any sublinear function. The minimal optimal smoothings $\sGen{f}{\sigma}, \sIn{f}{\sigma}, \sOut{f}{\sigma}$ are all translations of such functions. (Note the maximal optimal smoothings $\sGen{F}{\sigma}, \sIn{F}{\sigma}, \sOut{F}{\sigma}$ generally are not translations of Moreau envelopes of sublinear functions.)
\begin{lemma}\label{lem: MoreauOfSublinear}
    For any sublinear function $\sigma$ and $x \in \varspace$,
    \begin{equation}\label{MoreauOfSigma}
        (\infConv{\sigma}{\frac{1}{2}\normE{\cdot}^2})(x) = \frac{1}{2}\normE{x}^2 - \frac{1}{2}\dist{x, \partial \sigma(0)}^2 \ . 
    \end{equation}
\end{lemma}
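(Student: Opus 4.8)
The plan is to reduce the infimal convolution to a concave maximization over $\partial \sigma(0)$ and then recognize that maximization as a completed square. First I would invoke the support-function representation of a finite sublinear function, $\sigma(y) = \sup_{\zeta \in \partial \sigma(0)} \ip{\zeta, y}$, exactly as was already used in the proof of Lemma~\ref{lem: coreSigmaIsPriceEpigraph}. This turns the Moreau envelope into a min--max problem,
\begin{equation*}
    (\infConv{\sigma}{\tfrac{1}{2}\normE{\cdot}^2})(x) = \inf_{y \in \varspace}\ \sup_{\zeta \in \partial \sigma(0)}\ \ip{\zeta, y} + \tfrac{1}{2}\normE{x - y}^2 .
\end{equation*}

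Next I would swap the infimum and supremum. Since $\sigma$ is finite-valued on all of $\varspace$, the set $\partial \sigma(0)$ is nonempty, compact, and convex, while the objective is strongly convex in $y$ and affine in $\zeta$, so Sion's minimax theorem licenses the interchange. For each fixed $\zeta$ the inner unconstrained minimization in $y$ is solved in closed form by $y = x - \zeta$, yielding the value $\ip{\zeta, x} - \tfrac{1}{2}\normE{\zeta}^2$. Hence
\begin{equation*}
    (\infConv{\sigma}{\tfrac{1}{2}\normE{\cdot}^2})(x) = \sup_{\zeta \in \partial \sigma(0)}\ \ip{\zeta, x} - \tfrac{1}{2}\normE{\zeta}^2 .
\end{equation*}

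The final step is to complete the square via $\ip{\zeta, x} - \tfrac{1}{2}\normE{\zeta}^2 = \tfrac{1}{2}\normE{x}^2 - \tfrac{1}{2}\normE{x - \zeta}^2$. Pulling the $\zeta$-independent term $\tfrac{1}{2}\normE{x}^2$ out of the supremum converts the maximization into the minimization $\inf_{\zeta \in \partial \sigma(0)} \tfrac{1}{2}\normE{x - \zeta}^2 = \tfrac{1}{2}\dist{x, \partial \sigma(0)}^2$, which is precisely the claimed identity \eqref{MoreauOfSigma}.

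I expect no serious obstacle here; the only point needing care is justifying the minimax interchange, for which the compactness of $\partial \sigma(0)$ (a consequence of $\sigma$ being finite everywhere) is the key fact. As an alternative that sidesteps Sion's theorem, one could instead compute the Fenchel conjugate directly: the rule $(\infConv{f}{g})^* = f^* + g^*$ together with $\sigma^* = \delta_{\partial \sigma(0)}$ and the self-conjugacy of $\tfrac{1}{2}\normE{\cdot}^2$ gives $(\infConv{\sigma}{\tfrac{1}{2}\normE{\cdot}^2})^*(\zeta) = \delta_{\partial \sigma(0)}(\zeta) + \tfrac{1}{2}\normE{\zeta}^2$; since the Moreau envelope is finite and convex (hence closed), its biconjugate recovers the same supremum displayed above, after which the identical completing-the-square step finishes the argument.
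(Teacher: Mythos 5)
Your proof is correct, but it takes a genuinely different route from the paper's. The paper works on the primal side: it first establishes the proximal-map identity $\argmin_{y} \sigma(y) + \frac{1}{2}\normE{x-y}^2 = x - P_{\partial\sigma(0)}(x)$ (Equation~\eqref{proxOfSigma}) by verifying the subgradient optimality condition $x - x^+ \in \partial\sigma(x^+)$ via the normality conditions of the projection, and then plugs this minimizer into the envelope and completes the square. You instead pass to the dual side: the support-function representation turns the envelope into a min--max, Sion's theorem (justified by compactness of $\partial\sigma(0)$, concavity in $\zeta$, strong convexity in $y$) licenses the interchange, and the inner minimization collapses to $\ip{\zeta,x} - \frac{1}{2}\normE{\zeta}^2$, after which the same completion of the square finishes; your Fenchel-conjugate alternative, using $(\infConv{f}{g})^* = f^* + g^*$ and $\sigma^* = \delta_{\partial\sigma(0)}$ with biconjugation of the (finite, convex, hence closed) envelope, is an equally valid way to reach the identical supremum without invoking minimax machinery. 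Each approach buys something: yours is shorter and sidesteps the optimality-condition verification entirely, and it transparently exposes the dual structure $\sup_{\zeta\in\partial\sigma(0)} \ip{\zeta,x} - \frac{1}{2}\normE{\zeta}^2$ (mirroring the sup--sup computation in the proof of Lemma~\ref{lem: coreSigmaIsPriceEpigraph}); the paper's is more elementary (no minimax or biconjugation theorem needed) and, importantly, produces the explicit prox formula \eqref{proxOfSigma} as a byproduct, which the paper reuses later to compute the optimal smoothings of the max function and of $\lambda_{\max}$ in Appendix~\ref{app:max-eigen}. If you adopted your route, that formula would need to be extracted separately, e.g.\ from the saddle point $\zeta^\star = P_{\partial\sigma(0)}(x)$, $y^\star = x - \zeta^\star$.
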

\begin{proof}
    First, we assert that 
    \begin{equation}\label{proxOfSigma}
        \argmin_{y \in \varspace} \sigma(y) + \frac{1}{2}\normE{x - y}^2 = x - P_{\partial \sigma(0)}(x) .
    \end{equation}
    Let $x^+ = x - P_{\partial \sigma(0)}(x)$. It suffices to show that $x - x^+ \in \partial \sigma(x^+)$ as this is the necessary and sufficient condition to attain the minimum in \eqref{proxOfSigma}. For any $\zeta \in \partial \sigma(0)$, we have
    \begin{align*}
        \ip{x^+, \zeta} & =  \ip{x - P_{\partial \sigma(0)}(x), \zeta} \leq \ip{x - P_{\partial \sigma(0)}(x), P_{\partial \sigma(0)}(x)} = \ip{x^+, x - x^+}
    \end{align*}
    where the inequality follows by normality conditions. Since $\sigma$ is sublinear, this implies $x - x^+ \in \partial \sigma(x^+)$ and hence \eqref{proxOfSigma}.
    Then the claimed general formula for the Moreau envelope of $\sigma$ follows as
    \begin{align*}
        (\infConv{\sigma}{\frac{1}{2}\normE{\cdot}^2})(x)
        & = \inf_{y \in \varspace} \sigma(y) + \frac{1}{2}\normE{x - y}^2\\
        & = \sigma\left(x - P_{\partial \sigma(0)}(x)\right) + \frac{1}{2}\normE{P_{\partial \sigma(0)}(x)}^2 \\
        & = \ip{x  - P_{\partial \sigma(0)}(x), P_{\partial \sigma(0)}(x)} + \frac{1}{2}\normE{P_{\partial \sigma(0)}(x)}^2 \\
        & = \frac{1}{2}\normE{x}^2 - \frac{1}{2}\normE{x - P_{\partial \sigma(0)}(x)}^2
    \end{align*}
    where the second equality follows by \eqref{proxOfSigma}, the third because $P_{\partial \sigma(0)}(x) \in \partial \sigma (x - P_{\partial \sigma(0)}(x))$ by normality conditions, and the last is just completing the square. Since $\dist{x, \partial \sigma(0)} = \normE{x - P_{\partial \sigma(0)}(x)}$, the claimed formula follows.
\end{proof}

\subsubsection{The Optimal Smoothings of the ReLU Function}
    As a simple first example of calculating optimal smoothing with our theory, consider the ReLU (rectified linear unit) function denoted $\sigma(x) \defeq \max\set{0, x}$ defined with $x\in\varspace=\RR$. A wide range of ad hoc smoothings of this function have been used in the machine learning literature~\cite{barron2021squareplussoftpluslike, ramachandran2017searchingactivationfunctions, pmlr-v15-glorot11a,clevert2016fastaccuratedeepnetwork}. Our theory provides principled alternatives. Noting that $\partial \sigma(0) = \mathrm{conv}\{0,1\}$, the functional core can be computed as
    \begin{align*}
        \price{\sigma}(x) = \max_{\zeta \in \partial \sigma(0)}\zeta x + \frac{1}{2}\zeta^2 = \max_{\zeta \in \{0,1\}}\zeta x + \frac{1}{2}\zeta^2 = \max\set{0, x + \frac{1}{2}} = \sigma(x + 1/2) .
    \end{align*}
    The functional center is given by $(\cCenter{\sigma},r_\sigma) = (-1/2, 0)$ and the functional width is $\cGap{\sigma} = 1/8$.
    Observing that $\price{\sigma} = r_\sigma + \sigma(\cdot - \cCenter{\sigma})$, \eqref{uniquenessOfSigmaSmoothings} ensures that the ReLU function has a unique optimal smoothing. In particular, from Theorem~\ref{thm: optimalSigmaSmoothing}, calculating an infimal convolution of $\frac{1}{2}\normE{\cdot}^2$ with $\price{\sigma}$ gives the unique optimal $1$-smoothing as
    \begin{equation}
        \sGen{f}{\sigma}(x) = 
        \begin{cases}
            - \frac{1}{16} & \text{if } x < -1/2 \\
            \frac{1}{2}(x + \frac{1}{2})^2 - \frac{1}{16} & \text{if } -1/2 \leq x \leq 1/2 \\
            x - \frac{1}{16} & \text{otherwise.}
        \end{cases}
    \end{equation}
    The error of approximation is $\dist{\sGen{f}{\sigma},\sigma} = \frac{1}{2}\cGap{\sigma} = 1/16$. More generally, $\frac{1}{\beta}\sGen{f}{\sigma}(\beta x)$ gives the optimal $\beta$-smoothing for any $\beta > 0$, with approximation error of $\frac{1}{16\beta}$.
    
\subsubsection{An Optimal Smoothing for any Norm}
For any norm $\sigma = \norm{\cdot}$ (not necessarily the Euclidean norm), we find that the function center is very structured. Namely, $\cCenter{\sigma} = 0$ and $\cGap{\sigma} = \price{\sigma}(0)$. From this, one can conclude that the Moreau envelope of the norm, $\infConv{\sigma}{\frac{\beta}{2}\normE{\cdot}^2}$, provides an optimal outer $\beta$-smoothing. From this optimal outer smoothing, optimal smoothings in general and inner smoothings are given by appropriately adding constants to this envelope via~\eqref{eq: reformulationOfOptFunctions}.
\begin{proposition}\label{prop: MoreauOfNormIsOptimal}
    For any norm $\sigma$ and $\beta>0$, the Moreau Envelope $\infConv{\sigma}{\frac{\beta}{2}\normE{\cdot}^2}$ is an optimal outer $\beta$-smoothing of $\sigma$. An optimal general smoothing and inner smoothing is given by $(\infConv{\sigma}{\frac{\beta}{2}\normE{\cdot}^2}) + \beta\cGap{\sigma}/2$ and $(\infConv{\sigma}{\frac{\beta}{2}\normE{\cdot}^2}) + \beta\cGap{\sigma}$, respectively, where $\cGap{\sigma} = \max_{\zeta \in \partial \sigma(0)} \frac{1}{2}\normE{\zeta}^2$. 
    % and $\infConv{\sigma}{\frac{1}{2}\normE{\cdot}^2} + \max_{\zeta \in \partial \sigma(0)} \frac{1}{2}\normE{\zeta}^2$ is an optimal inner 1-smoothing of $\sigma$.
\end{proposition}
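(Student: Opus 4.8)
The plan is to reduce everything to the base smoothness level $\beta=1$ via Lemma~\ref{lem: scalingOperation} and then specialize the characterizations in Theorems~\ref{thm: optimalSigmaSmoothing},~\ref{thm: optimalSigmaInnerSmoothing}, and~\ref{thm: optimalSigmaOuterSmoothing} to the case that $\sigma$ is a norm (in particular, $\sigma$ is sublinear, so these theorems apply, and $\partial\sigma(0)$ is the compact dual unit ball). The entire argument hinges on two structural facts: that the functional center sits at the origin, $\cCenter{\sigma}=0$, and that the functional width is $\cGap{\sigma}=\price{\sigma}(0)=\max_{\zeta\in\partial\sigma(0)}\frac12\normE{\zeta}^2$. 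Once these are in hand, the minimal optimal smoothings $\sOut{f}{\sigma}$, $\sGen{f}{\sigma}$, and $\sIn{f}{\sigma}$ all collapse into translates of a single Moreau envelope.

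The crux is establishing $\cCenter{\sigma}=0$. First I would record that, since $\sigma$ is a norm, $\sigma(-x)=\sigma(x)$, so $\partial\sigma(0)$ is symmetric: $\zeta\in\partial\sigma(0)\iff-\zeta\in\partial\sigma(0)$, with $\normE{-\zeta}=\normE{\zeta}$. Feeding this into the formula $\price{\sigma}(x)=\max_{\zeta\in\partial\sigma(0)}\ip{\zeta,x}+\frac12\normE{\zeta}^2$ from~\eqref{priceFormula} and substituting $\zeta\mapsto-\zeta$ shows $\price{\sigma}$ is an even function. Since~\eqref{focusFormula} identifies $\cCenter{\sigma}$ as the unique minimizer of the $1$-strongly convex objective $\price{\sigma}(\cdot)+\frac12\normE{\cdot}^2$, and this objective is even, its unique minimizer is fixed under $x\mapsto-x$, forcing $\cCenter{\sigma}=0$. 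It then follows from~\eqref{focusFormula} that $r_\sigma=\price{\sigma}(0)$, and hence $\cGap{\sigma}=r_\sigma+\frac12\normE{\cCenter{\sigma}}^2=\price{\sigma}(0)=\max_{\zeta\in\partial\sigma(0)}\frac12\normE{\zeta}^2$, giving the claimed width.

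With $\cCenter{\sigma}=0$ and $r_\sigma=\cGap{\sigma}$, I would substitute into the minimal optimal outer smoothing and pull the constant $r_\sigma$ out of the infimal convolution:
\[
\sOut{f}{\sigma}=\left[\infConv{\left(r_\sigma+\sigma\right)}{\tfrac12\normE{\cdot}^2}\right]-\cGap{\sigma}=\infConv{\sigma}{\tfrac12\normE{\cdot}^2},
\]
which is exactly the Moreau envelope of $\sigma$ at level $1$. By Theorem~\ref{thm: optimalSigmaOuterSmoothing}, $\sOut{f}{\sigma}$ is (the pointwise-largest) optimal outer $1$-smoothing of $\sigma$. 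Rescaling by $\eta=1/\beta$ and using $\scaleOp{\infConv{\sigma}{\frac12\normE{\cdot}^2}}{1/\beta}=\infConv{\sigma}{\frac{\beta}{2}\normE{\cdot}^2}$ (a one-line change of variables in the infimal convolution) together with Lemma~\ref{lem: scalingOperation} then shows that the Moreau envelope $\infConv{\sigma}{\frac{\beta}{2}\normE{\cdot}^2}$ is an optimal outer $\beta$-smoothing, as it lands exactly on the upper endpoint of the characterized optimal outer range.

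Finally, the general and inner smoothings follow with no further analytic work from the reformulation~\eqref{eq: reformulationOfOptFunctions}, which gives $\sGen{f}{\sigma}=\sOut{f}{\sigma}+\cGap{\sigma}/2$ and $\sIn{f}{\sigma}=\sOut{f}{\sigma}+\cGap{\sigma}$ as upward translates of the outer smoothing by the prescribed multiples of the width; invoking Theorems~\ref{thm: optimalSigmaSmoothing} and~\ref{thm: optimalSigmaInnerSmoothing} and rescaling by $1/\beta$ produces the stated optimal general and inner $\beta$-smoothings as the Moreau envelope plus the corresponding constant. I expect the only genuine obstacle to be the centering step $\cCenter{\sigma}=0$; everything downstream is bookkeeping that substitutes this value into the already-proven characterizations. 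One should, however, verify carefully the exact $\beta$-dependence of the additive constants produced by rescaling, since the scaling operator $\scaleOp{\cdot}{1/\beta}$ multiplies constants by $1/\beta$.
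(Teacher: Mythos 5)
Your proposal is correct and takes essentially the same route as the paper's proof: reduce to $\beta=1$, establish $\cCenter{\sigma}=0$ (hence $r_\sigma=\cGap{\sigma}=\price{\sigma}(0)=\max_{\zeta\in\partial\sigma(0)}\frac{1}{2}\normE{\zeta}^2$), pull the constant out of the infimal convolution to get $\sOut{f}{\sigma}=\infConv{\sigma}{\frac{1}{2}\normE{\cdot}^2}$, and finish via Theorem~\ref{thm: optimalSigmaOuterSmoothing}, the translations in \eqref{eq: reformulationOfOptFunctions}, and rescaling. The one genuine difference is the centering step: the paper verifies the first-order condition $0\in\partial\price{\sigma}(0)$ by noting that any $\hat{\zeta}\in\argmax_{\zeta\in\partial\sigma(0)}\frac{1}{2}\normE{\zeta}^2$ and its negation $-\hat{\zeta}$ both lie in $\partial\price{\sigma}(0)$ (using $\partial\sigma(0)=-\partial\sigma(0)$ and compactness of $\partial\sigma(0)$), so their midpoint $0$ does too; you instead observe that the same symmetry makes $\price{\sigma}$ even, so the unique minimizer of the even, $1$-strongly convex objective $\price{\sigma}+\frac{1}{2}\normE{\cdot}^2$ is fixed under $x\mapsto-x$ and must be $0$. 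Your variant is a touch more elementary, sidestepping subdifferential calculus at the maximum, and both arguments are sound.

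Your closing caveat about the $\beta$-dependence of the constants is not mere prudence; it is well founded. Carrying out the rescaling as you describe gives $\scaleOp{\sGen{f}{\sigma}}{1/\beta}=\infConv{\sigma}{\frac{\beta}{2}\normE{\cdot}^2}+\cGap{\sigma}/(2\beta)$ and $\scaleOp{\sIn{f}{\sigma}}{1/\beta}=\infConv{\sigma}{\frac{\beta}{2}\normE{\cdot}^2}+\cGap{\sigma}/\beta$, since $\scaleOp{\cdot}{1/\beta}$ multiplies additive constants by $1/\beta$. The constants $\beta\cGap{\sigma}/2$ and $\beta\cGap{\sigma}$ printed in the statement therefore appear to be typos (they agree with the correct ones only at $\beta=1$): for instance, for $\sigma(x)=|x|$ with $\beta=2$, the function $\infConv{\sigma}{\normE{\cdot}^2}+\frac{1}{4}$ is an inner $2$-smoothing at distance $\frac{1}{4}$, strictly better than $\infConv{\sigma}{\normE{\cdot}^2}+\beta\cGap{\sigma}=\infConv{\sigma}{\normE{\cdot}^2}+1$, which has distance $1$. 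So your derivation produces the correct constants, and the lesson of your final sanity check is that the statement should read $\cGap{\sigma}/(2\beta)$ and $\cGap{\sigma}/\beta$.
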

\begin{proof}
    We will show that $\sOut{f}{\sigma} = \infConv{\sigma}{\frac{1}{2}\normE{\cdot}^2}$. Noting that $\infConv{\sigma}{\frac{\beta}{2}\normE{\cdot}^2} = \scaleOp{\infConv{\sigma}{\frac{1}{2}\normE{\cdot}^2}}{1/\beta}$ by positive homogeneity of $\sigma$, this will prove optimality of $\infConv{\sigma}{\frac{\beta}{2}\normE{\cdot}^2}$, by Theorem~\ref{thm: optimalSigmaOuterSmoothing}. Optimal general and inner smoothings will then follow by~\eqref{eq: reformulationOfOptFunctions}. Now, observe that
    \begin{align*}
       \sOut{f}{\sigma} = \left[\infConv{r_{\sigma} + \sigma(\cdot - \cCenter{\sigma})}{\frac{1}{2}\normE{\cdot}^2}\right] - \cGap{\sigma} = \left[\infConv{\sigma(\cdot - \cCenter{\sigma})}{\frac{1}{2}\normE{\cdot}^2}\right] - \frac{1}{2}\normE{\cCenter{\sigma}}^2 
    \end{align*}
    where the first equality is by definition and the second follows because $\infConv{(c + f)}{g} = r + \left(\infConv{f}{g}\right)$ for any $r \in \RR$ and two functions $f$ and $g$, and $\cGap{\sigma} = r_\sigma + \frac{1}{2}\normE{\cCenter{\sigma}}^2 $ by definition. Therefore, $\sOut{f}{\sigma} = \infConv{\sigma}{\frac{1}{2}\normE{\cdot}^2}$ follows immediately once we show that $\cCenter{\sigma} = 0$. Further, from the identity $\cCenter{\sigma} = 0$, we will get that $\cGap{\sigma} = r_\sigma = \price{\sigma}(0) = \max_{\zeta \in \partial \sigma(0)} \frac{1}{2}\normE{\zeta}^2$ by Equations~\eqref{priceFormula}~and~\eqref{focusFormula}. Therefore proving that $\cCenter{\sigma} = 0$ will conclude the proof.
    
    Since $\cCenter{\sigma} = \argmin_{x \in \varspace}\price{\sigma}(x) + \frac{1}{2}\normE{x}^2$ by Equation~\ref{focusFormula}, the first order condition $0 \in \partial \price{\sigma}(0)$ is sufficient to prove $\cCenter{\sigma} = 0$. Observe that (i) $\argmax_{\zeta \in \partial \sigma(0)}\frac{1}{2}\normE{\zeta}^2 \subseteq \partial \price{\sigma}(0)$ by \eqref{priceFormula} and (ii) $\partial \sigma(0) = - \partial \sigma(0)$ because $\sigma$ is a norm. Combining (i) and (ii) gives $0 = (\hat{\zeta} - \hat{\zeta})/2 \in \partial \price{\sigma}(0)$ whenever $\hat{\zeta} \in \argmax_{\zeta \in \partial \sigma(0)}\frac{1}{2}\normE{\zeta}^2$, by convexity of the subdifferential. Since $\partial \sigma(0)$ is compact, it follows $\argmax_{\zeta \in \partial \sigma(0)}\frac{1}{2}\normE{\zeta}^2$ is nonempty. Therefore $0 \in \partial \price{\sigma}(0)$ and the proof is complete.
\end{proof}

Note that this result does not ensure that the Moreau envelope is the unique optimal outer smoothing. Depending on the structure of the norm under consideration, the optimal smoothing may or may not be unique. Below, we explicitly calculate two examples (one of each type) via their functional core, center, and width. 

First consider the one-norm $\sigma (x) = \norm{x}_1 \defeq \sum_{i=1}^d\abs{x_i}$ defined in $\varspace = \RR^d$. Noting that $\partial \sigma(0) = \mathrm{conv}\set{(\pm 1, \dots \pm 1)}$, we can calculate the functional core as
\begin{align*}
    \price{\sigma}(x) = \max_{\zeta \in \partial \sigma(0)} \ip{\zeta, x} + \frac{1}{2}\norm{\zeta}_2^2 = \max_{\zeta \in \set{(\pm 1, \dots \pm 1)}} \ip{\zeta, x} + \frac{1}{2}\norm{\zeta}_2^2 = \sigma(x) + \frac{d}{2}.
\end{align*}
Moreover, the functional center is given by $\cCenter{\sigma} = (0, \dots, 0)$ and $ r_\sigma = \frac{d}{2}$. Hence $\price{\sigma} = r_\sigma + \sigma(\cdot - \cCenter{\sigma})$ and so \eqref{uniquenessOfSigmaSmoothings} ensures the one-norm has a unique optimal smoothing. Proposition~\ref{prop: MoreauOfNormIsOptimal} then gives the unique optimal outer $1$-smoothing of
    
$$
\sOut{f}{\sigma} = \infConv{\norm{\cdot}_1}{\frac{1}{2}\norm{\cdot}_2^2} = \sum_{i=1}^d H_1(x_i) \qquad \text{where} \quad H_1(x_i) = 
\begin{cases} 
    \frac{1}{2}\abs{x_i}^2 & \text{if } \abs{x_i} \leq 1 \\ 
        \abs{x_i} - \frac{1}{2} & \text{otherwise.}
\end{cases}    
$$
The error of approximation is $\dist{\sOut{f}{\sigma}, \sigma} = \frac{1}{2}\cGap{\sigma} = \frac{d}{2}$. The optimal general and inner $1$-smoothings are then given by $\sum_{i=1}^d H_1(x_i) + \frac{d}{4}$ and $\sum_{i=1}^d H_1(x_i) + \frac{d}{2}$, respectively.

Second, as a less structured example norm, consider the following weighted infinity norm $\sigma(x) = \max\set{|x_1|, 2|x_2|}$ defined in $\varspace = \RR^2$.
Noting that $\partial \sigma(0) = \mathrm{conv}\set{\pm(1,0), \pm(0, 2)}$, one can calculate the functional core as
    \begin{align*}
        \price{\sigma}(x) = \max_{\zeta \in \partial \sigma(0) } \ip{\zeta, x} + \frac{1}{2}\norm{\zeta}_2^2  = \max_{\zeta \in\set{\pm(1,0), \pm(0, 2)}} \ip{\zeta, x} + \frac{1}{2}\norm{\zeta}_2^2 = \max\set{|x_1| + \frac{1}{2}, 2|x_2| + 2}.
    \end{align*}
Moreover, the functional center is given by $\cCenter{\sigma} = (0, 0)$ and $r_\sigma = 2$. Since $\sigma(x) + r_\sigma > \price{\sigma}(x)$ whenever $|x_1| \geq 2$ and $|x_2| = 0$, we note that $\sigma$ does not have unique optimal smoothings, by \eqref{uniquenessOfSigmaSmoothings}. In this case, the largest (in function value) optimal outer $1$-smoothing is given by
\[ 
\sOut{f}{\sigma}(x) = \infConv{\sigma}{\frac{1}{2}\norm{\cdot}_2^2} =
\begin{cases}
\frac{1}{2}\norm{x}_2^2 & \text{if } |x_1| + \frac{1}{2}|x_2| \leq 1 \\
|x_1| - \frac{1}{2} & \text{else if }  2|x_2|-|x_1| \leq - 1 \\
2|x_2| - 2 & \text{else if }  2|x_2| - |x_1| \geq 4\\
\frac{1}{5}\left(4|x_1|+ 2|x_2| - 2 + \frac{1}{2}(2|x_2|-|x_1|)^2\right) & \text{else}
\end{cases} \]
while the smallest optimal outer $1$-smoothing is given by
\[
\sOut{F}{\sigma}(x) =  \infConv{(\price{\sigma} - \cGap{\sigma})}{\frac{1}{2}\norm{\cdot}_2^2} = 
\begin{cases}
\sOut{f}{\sigma}((0, x_2)) & \text{if } |x_1| \leq 3/2  \\
\sOut{f}{\sigma}(x + (3/2, 0)) & \text{if } x_1 < -3/2 \\
\sOut{f}{\sigma}(x - (3/2, 0)) & \text{else}.
\end{cases} 
\]
Hence, a $1$-smooth convex function $f$ is an optimal $1$-smoothing of this weighted infinity norm exactly if it lies between these two functions.

\subsubsection{The Optimal Smoothings of Maximum and Maximum Eigenvalue Functions}
    Consider the max function $\sigma (x) \defeq \max\set{x_1, \dots, x_d}$, defined in $\varspace = \RR^d$. Noting that $\partial \sigma(0) = \mathrm{conv}\set{e_1, \dots, e_d}$, where $e_i$ is the $i$-th standard basis vector, we have
    \begin{align*}
        & \price{\sigma}(x) = \max_{\zeta \in \partial \sigma(0)} \ip{\zeta, x} + \frac{1}{2}\norm{\zeta}_2^2 = \max_{\zeta \in \set{e_1, \dots, e_d}} \ip{\zeta, x} + \frac{1}{2}\norm{\zeta}_2^2 = \sigma(x) + \frac{1}{2}, \\
        & \cCenter{\sigma} = \argmin_{x \in \RR^d}\frac{1}{2}+\sigma(x) + \frac{1}{2}\norm{x}_2^2 = -(1, \dots, 1) / d, \\
        & r_\sigma = \price{\sigma}(\cCenter{\sigma}) = -\frac{1}{d} + \frac{1}{2}.
    \end{align*}
    This implies $\price{\sigma} = r_\sigma + \sigma(\cdot - \cCenter{\sigma})$ and so $\sigma$ has a unique optimal $1$-smoothing. In particular, it is given by\footnote{This can be verified by directly computing $\sGen{F}{\sigma}(x) = \infConv{\price{\sigma}}{\frac{1}{2}\norm{\cdot}_2^2} - \cGap{\sigma}/2 = \infConv{\left(\frac{1}{2} + \sigma\right)}{\frac{1}{2}\norm{\cdot}_2^2} -(1 - 1/d)/4 $ using~\eqref{proxOfSigma} and the formula for projecting on the unit simplex $\partial\sigma(0)= \mathrm{conv}\set{e_1, \dots, e_d}$ given in \cite[Figure 1]{duchi2008efficient}. Further note from this projection formula, this optimal smoothing can be computed in (expected) linear time in $n$~\cite{Condat2016}.}
    % $$
    % \sIn{f}{\sigma} = \fBgInner{\sigma} = \frac{1}{2} - \frac{1}{n} + \frac{1}{2}\norm{x + \frac{e}{n}}_2^2 - \frac{1}{2}\norm{x + \frac{e}{n} - P_{\Delta_{n}}\left(x + \frac{e}{n}\right)}_2^2
    % $$
    % $$
    % \sIn{f}{\sigma}(x) = \ip{P_{\Delta_{n}}\left(x + \frac{e}{n}\right), x} + \frac{1}{2} - \frac{1}{2}\norm{P_{\Delta_{n}}\left(x + \frac{e}{n}\right)}_2^2
    % $$
    \begin{equation}\label{eq: maxFuncOptSmoothing}
        \sGen{f}{\sigma}(x) = \sGen{F}{\sigma}(x) = \alpha + \frac{1}{2}\sum_{i = 1}^J(x_{\pi(i)} - \alpha)^2 + \frac{1}{4}\left(1 + \frac{1}{d}\right) 
    \end{equation}
    where the permutation $\pi$ ensures $x_{\pi(i)}\geq x_{\pi(i+1)}$ for all $i < d$, $\alpha =  \frac{1}{J}(\sum_{i = 1}^Jx_{\pi(i)} - 1)$, and
    $J = \max\set{j \in \{1, \dots, d\} \mid x_{\pi(j)} - \frac{1}{j}(\sum_{i = 1}^{j}x_{\pi(i)} - 1) > 0}$. The error of approximation is $\dist{\sGen{f}{\sigma}, \sigma} = \frac{1}{2}\cGap{\sigma} = \frac{1}{4}\left(1 - \frac{1}{d}\right)$. More generally, $\frac{1}{\beta}\sGen{f}{\sigma}(\beta x)$ gives the optimal $\beta$-smoothing for any $\beta > 0$, with approximation error of $\frac{1}{4\beta}\left(1 - \frac{1}{d}\right)$.

    Entirely similar reasoning and formulas apply to give the unique optimal smoothing of the maximum eigenvalue function $\sigma(A) \defeq \max_{x \in \RR^d, \norm{x}_2 =1}x^TAx$, defined in the space of symmetric $d \times d$ matrices $\varspace = \mathbb{S}^{d\times d}$. Appendix~\ref{app:max-eigen} provides the resulting parallel formulas.

\subsubsection{The Optimal Smoothings of Nonnegative, Second-Order, and Semidefinite Cone}
Perhaps the three most common cones are the nonnegative orthant $\RR^d_+=\{x\in\RR^d \mid x\geq 0\}$, second-order cone $K^\mathrm{SOC}=\{(x,t)\in\RR^{d+1} \mid \norm{x}_2\leq t\}$, and positive semidefinite cone $K^\mathrm{SDP}=\{X\in\mathbb{S}^{d\times d} \mid X \succeq 0\}$. Optimization over these sets corresponds to linear, second-order cone, and semidefinite programming.

Table~\ref{tab:classic-cones} states the conic center and width for each of these cones. In each case, the conic core is exactly $x_K+K$, establishing that they each possess a unique optimal smoothing.

To illustrate the process of calculating these quantities, consider the nonnegative orthant $K = \mathbb{R}^d_+ \defeq \set{x \in \RR^d \mid x_i \geq 0, i=1, \dots, d}$. Then, $N_K(0) = \mathrm{cone}\set{-e_1, \dots, -e_d}$ where $e_i$ denotes the $i$-th standard basis vector. Letting $e =  (1, \dots, 1)$ denote the all ones vector, we have that
    \begin{align*}
        &\core{K} = \set{x \in \varspace \mid \ip{\zeta, x} \leq -1, \norm{\zeta}_2 = 1, \zeta \in N_K(0)} =\set{x \in \RR^d \mid \ip{-e_i, x} \leq -1, i=1, \dots, d} = e + K, \\
        & \cCenter{K} = \argmin_{x \in \core{K}} \norm{x}_2 = e, \\
        & \cGap{K} = \norm{\cCenter{K}}_2 - 1 = \sqrt{d} - 1.
    \end{align*}
    From this, it follows, for example, that $e + \RR^d_+ + B(0,1)$ is the optimal inner $1$-smoothing with approximation error $\sqrt{d} - 1$. Similar processes of calculating give the claimed quantities for the second-order and semidefinite cones.

\begin{table}[t]
    \centering
    \begin{tabular}{|c c c c c|}
        \hline
        $K$ & $C_K$ & $\cCenter{K}$ & $\cGap{K}$ & Unique?\\ 
        \hline
        $\RR^d_+$ & $\set{(x_1, \dots, x_d) \in \RR^d \mid x_i \geq 1, i=1, \dots, d}$ & $(1, \dots, 1)$ & $\sqrt{d} - 1$ & Yes\\
        $K^\mathrm{SOC}$ & $\set{(x, t) \in \varspace \times \RR \mid t-\sqrt{2} \geq \norm{x}_2}$ & $(0, \sqrt{2})$ & $\sqrt{2} - 1$ & Yes\\
        $K^\mathrm{SDP}$ &$\set{x \in \mathbb{S}^{d\times d} \mid x \succeq I_d}$ & $I_d$ & $\sqrt{d} - 1$ & Yes\\
        $K^\mathrm{EXP}$ & See Figure~\ref{fig:expConeSmoothings}(c) & (-1.11957, 1, 1.71471) & 1.27897 & No \\
        \hline
        % $\set{(x, y, z) \in \RR^3 \mid z \geq ye^{x/y}, y > 0} \cup \set{(x, 0, z) \mid x \leq 0, z \geq 0}$ & (-1.11957, 1, 1.71471) & 1.27897 & No \\
        
    \end{tabular}
    \caption{Classic cones with their conic core, center and width. Values for the exponential cone are numerically computed. The final column indicates whether there is a unique optimal smoothing.}
    \label{tab:classic-cones}
\end{table}

\subsubsection{Numerically Computing All Optimal Smoothings of the Exponential Cone}
Finally, providing a contrast to the previous three particularly nice cones, we consider the exponential cone
$$K^\mathrm{EXP} = \set{(x, y, z) \in \RR^3 \mid z \geq ye^{x/y}, y > 0} \cup \set{(x, y, z) \in \RR^3 \mid x \leq 0, z \geq 0, y = 0}. $$
This cone has found widespread use, providing representations for many applied optimization problems in a standardized conic form. See the numerous references and examples in~\cite{boyd2007gp}.

To the best of our knowledge, the conic core of this cone does not possess a closed form. Instead of an analytic solution, our definitions facilitate a numerical computation of the conic core, center, and width for this set.
A rendering of the numerically computed conic core is given in Figure~\ref{fig:expConeSmoothings}. An immediate observation from this is that the conic core is not a translated copy of the exponential cone, implying by \eqref{uniquenessOfConeSmoothings} that there is not a unique optimal way to smooth the exponential cone. The set of all optimal inner $1$-smoothings is characterized by the minimal and maximal optimal $1$-smoothing $\sIn{s}{K}$ and $\sIn{S}{K}$, respectively, as defined in Equations~\eqref{coneInnerSmoothingFormula}. Numerical renderings of these extremal smoothings are also shown in Figure~\ref{fig:expConeSmoothings}.

We numerically estimate the boundary of the core $\core{K}$ by sampling points $x$ satisfying \[\sup_{\zeta \in N_K(0), \normE{\zeta}=1} \ip{\zeta, x} = -1\] as Lemma~\ref{lem: polyhedralExpressionOfCore} dictates. The supremum is numerically computed using a dense sample from the set $\set{\zeta \in \RR^3 \mid \zeta \in N_K(0), \norm{\zeta}_2=1}$. By computing a projection onto this computed conic core, one can numerically estimate the exponential cone's conic center and width. These numerically computed values are given in Table~\ref{tab:classic-cones}. The smooth sets $\sIn{s}{K}$ and $\sIn{S}{K}$ are obtained by adding a ball to the points in $\cCenter{K} + K$ and $\core{K}$, respectively.

\begin{figure}[t]
    \centering
    \begin{subfigure}[b]{0.32\textwidth}
        \includegraphics[width=\textwidth]{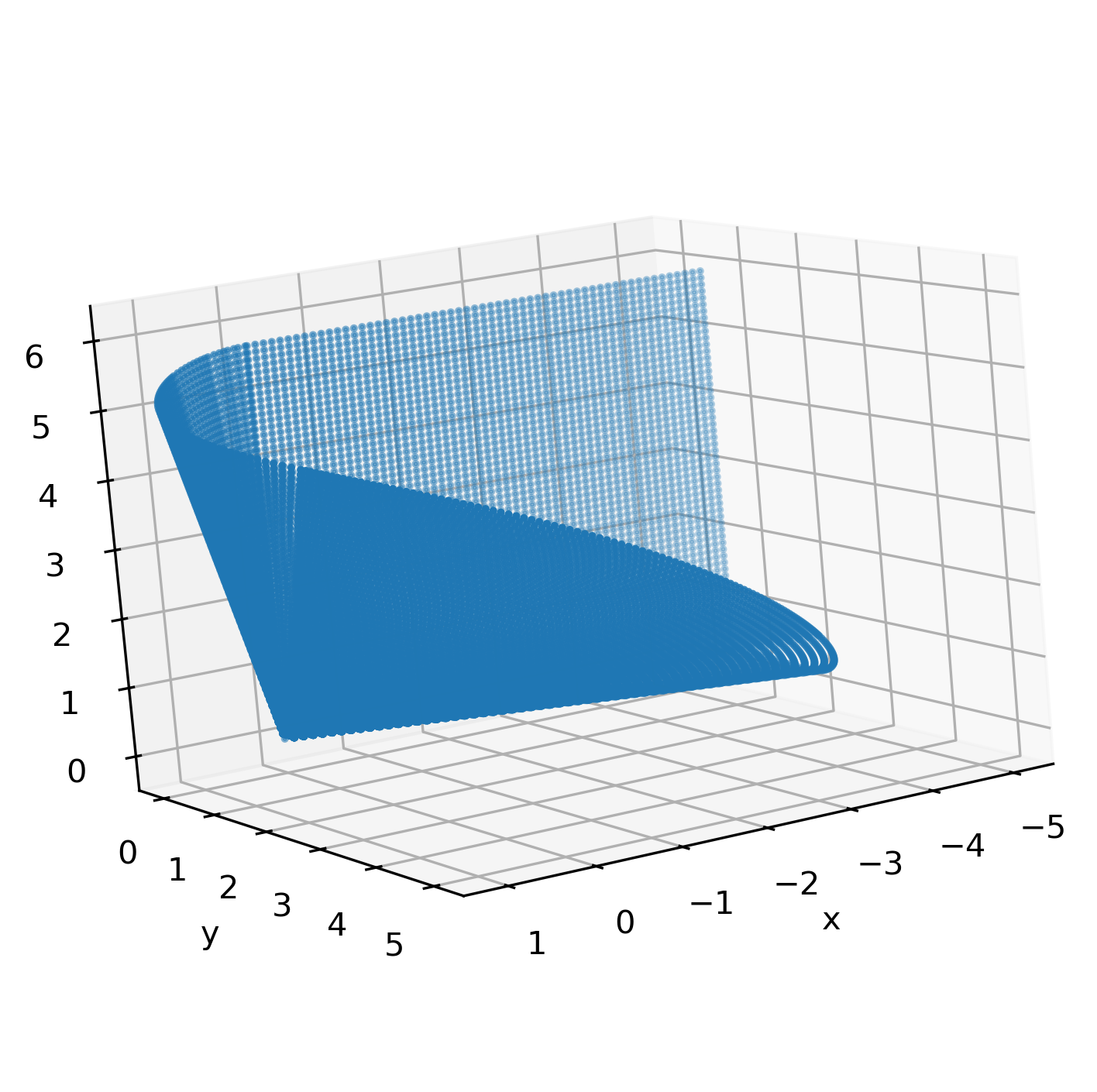}
        \caption{$K$}
    \end{subfigure}
    \hfill
    \begin{subfigure}[b]{0.32\textwidth}
        \includegraphics[width=\textwidth]{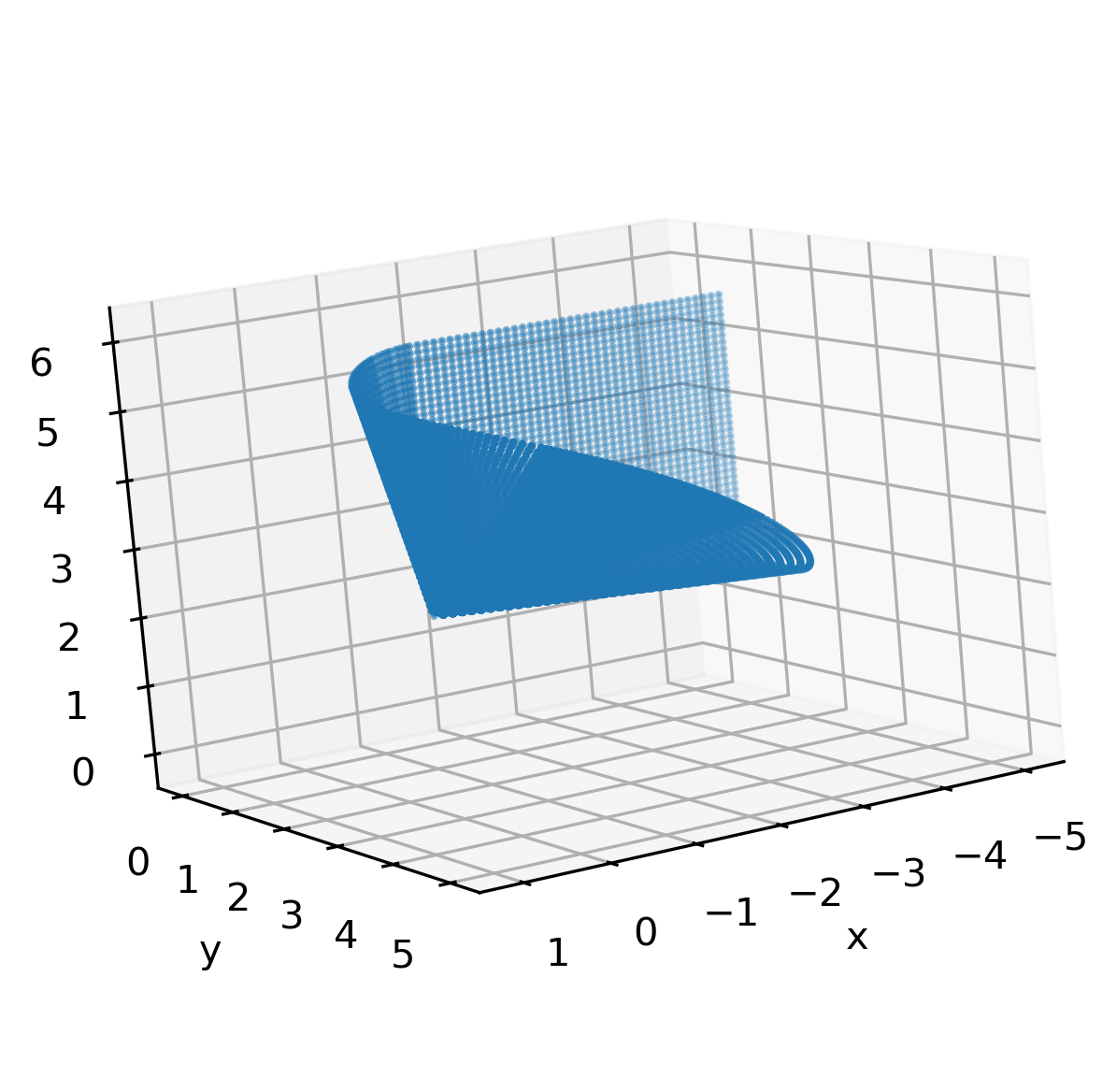}
        \caption{$\cCenter{K}+K$}
    \end{subfigure}
    \hfill
    \begin{subfigure}[b]{0.32\textwidth}
        \includegraphics[width=\textwidth]{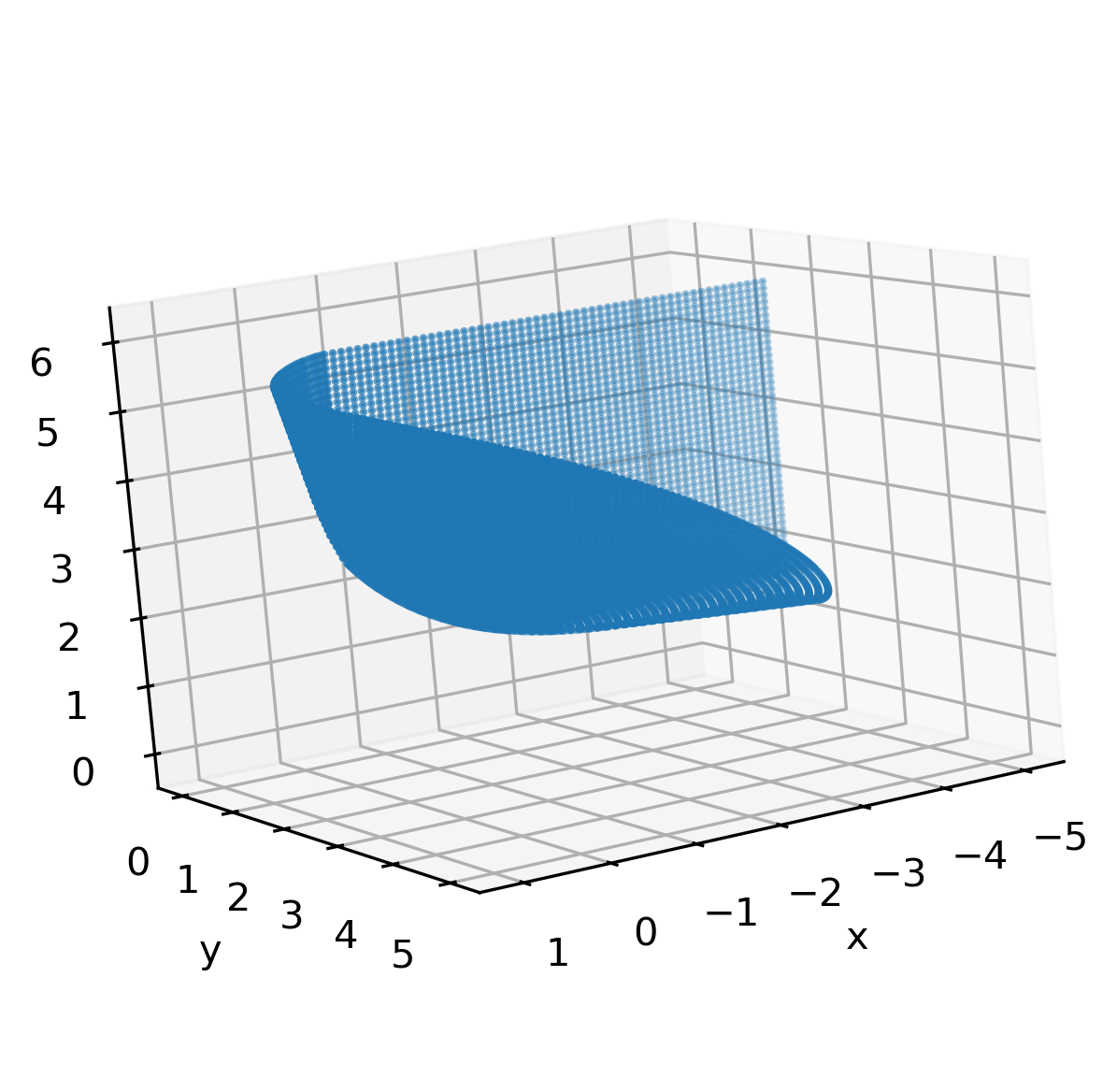}
        \caption{$\core{K}$}
    \end{subfigure}
    \hfill
    \begin{subfigure}[b]{0.32\textwidth}
        \includegraphics[width=\textwidth]{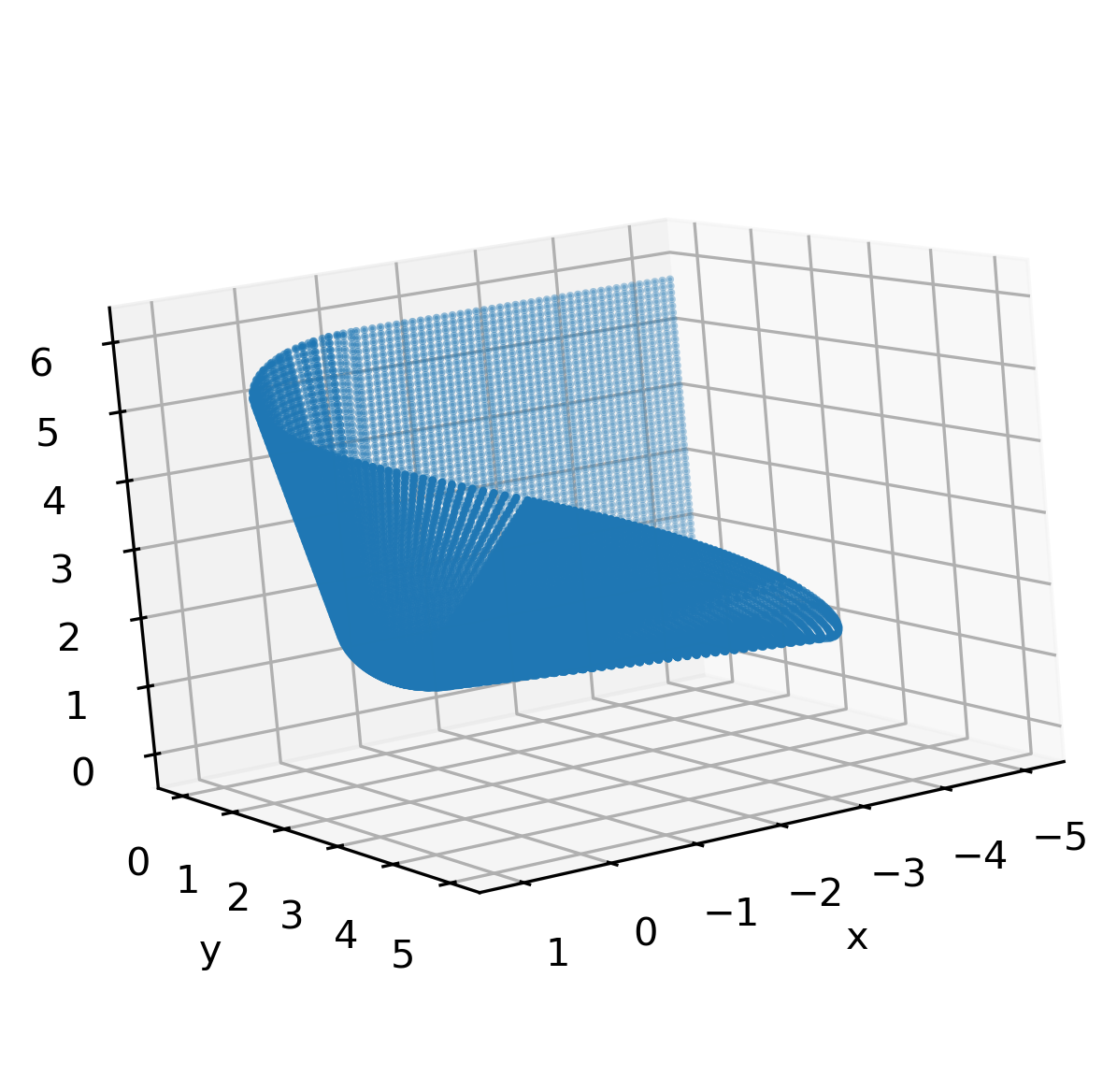}
        \caption{$\sIn{s}{K}$}
    \end{subfigure}
    \begin{subfigure}[b]{0.32\textwidth}
        \includegraphics[width=\textwidth]{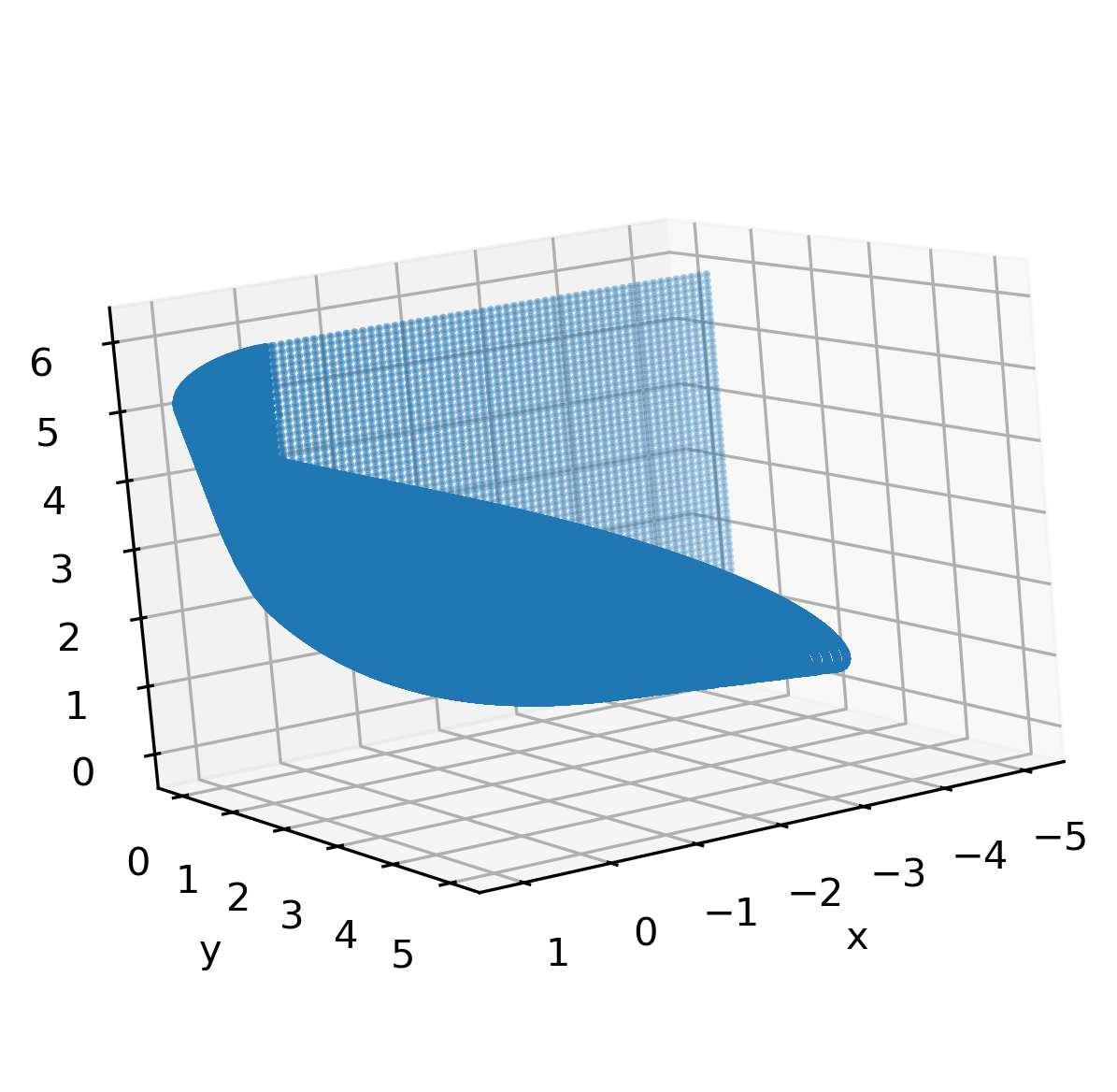}
        \caption{$\bgInner{K}$}
    \end{subfigure}
    \hfill
    \caption{The exponential cone, its translation $x_K+K$, its conic core, its minimal optimal $1$-smoothing, and its maximal optimal $1$-smoothing.}\label{fig:expConeSmoothings}
\end{figure}
    \subsection{Proofs of  Characterizations of Optimal Smoothings}\label{sec: conesProofs}
We begin with a simple technical lemma (Lemma~\ref{lem: distToConeFixesHorizon}). Then we prove our main theorems for sublinear function and convex cone smoothings, which follow the same pattern: (i) we first prove the result for inner $\beta$-smoothings, and then (ii) we show that characterizations of the optimal general and outer smoothings follow from simple transformations.

\begin{lemma}\label{lem: distToConeFixesHorizon}
    Let $K$ be a nonempty closed convex cone and $C$ be a nonempty closed convex set. Then $K = \hrzn{C}$ whenever $\dist{C, K} < \infty$.
\end{lemma}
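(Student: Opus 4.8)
The plan is to unwind the finiteness of the Hausdorff distance into the two set inclusions it provides, and then establish the inclusions $\hrzn{C}\subseteq K$ and $K\subseteq \hrzn{C}$ separately. Since $K$ is a closed convex cone, its own horizon cone satisfies $\hrzn{K}=K$, so the claim amounts to saying that $C$ and $K$ share a horizon cone; intuitively, the horizon cone is a large-scale invariant, and two sets lying within bounded Hausdorff distance of one another cannot differ at infinity. Concretely, $\dist{C,K}<\infty$ yields some finite $r\geq 0$ with $C\subseteq B_r+K$ and $K\subseteq B_r+C$, and I would use one of these inclusions for each direction.

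For $\hrzn{C}\subseteq K$, fix any $x\in C$ and take $u\in\hrzn{C}=\hrzn{C}(x)$, so that $x+\alpha u\in C$ for every $\alpha\geq 0$. Using $C\subseteq B_r+K$, I would choose $k_\alpha\in K$ with $\normE{x+\alpha u-k_\alpha}\leq r$. Dividing by $\alpha$ gives $\normE{(x+\alpha u)/\alpha - k_\alpha/\alpha}\leq r/\alpha$, and since $K$ is a cone we have $k_\alpha/\alpha\in K$; letting $\alpha\to\infty$ shows $k_\alpha/\alpha\to u$, so closedness of $K$ forces $u\in K$.

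For the reverse inclusion $K\subseteq \hrzn{C}$, fix $x\in C$ and take $u\in K$. Since $K$ is a cone, $tu\in K\subseteq B_r+C$ for all $t\geq 0$, so I can pick $c_t\in C$ with $\normE{tu-c_t}\leq r$. The key step is to combine $c_t$ with the fixed point $x$ convexly: for $t\geq\alpha\geq 0$ the point $(1-\alpha/t)x+(\alpha/t)c_t$ lies in $C$ by convexity, and equals $x+(\alpha/t)(c_t-x)=x+\alpha u+(\alpha/t)(c_t-tu-x)$. Because $\normE{c_t-tu}\leq r$ and $x$ is fixed, the last term vanishes as $t\to\infty$, so closedness of $C$ gives $x+\alpha u\in C$; as $\alpha\geq 0$ was arbitrary, $u\in\hrzn{C}(x)=\hrzn{C}$.

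I expect the reverse inclusion to be the main obstacle: in the first direction one exploits that $K$ is a cone to rescale recession directions directly, but $C$ need not be a cone, so I cannot simply rescale. Instead I must generate the entire ray $x+\alpha u$ inside $C$ by taking convex combinations of the anchor $x$ with points $c_t$ that track $tu$ out to infinity, and it is convexity and closedness of $C$ (together with the boundedness $\normE{c_t-tu}\leq r$) that make this limiting argument go through. Both inclusions $C\subseteq B_r+K$ and $K\subseteq B_r+C$ are genuinely used, one per direction, which is why finiteness of the full Hausdorff distance rather than just one-sided closeness is the right hypothesis.
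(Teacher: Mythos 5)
Your proof is correct, but it takes a genuinely different route from the paper's. The paper's argument is a two-line appeal to recession-cone calculus: writing $r = \dist{C,K}$, it chains $\hrzn{C} \subseteq \hrzn{(K+B_r)} = K \subseteq \hrzn{(C+B_r)} = \hrzn{C}$, where the inclusions come from $C \subseteq K + B_r$ and $K \subseteq C + B_r$ via monotonicity of horizon cones, and the two equalities --- invariance of the horizon cone under Minkowski addition of a bounded set --- are cited from Rockafellar's \emph{Convex Analysis} (Corollary 9.1.2). You instead prove both inclusions from first principles, and your two limiting arguments are, in effect, elementary proofs of exactly the facts the paper imports: your first direction (divide $x+\alpha u$ by $\alpha$, use that $K$ is a closed cone so $k_\alpha/\alpha \in K$ and $k_\alpha/\alpha \to u$) establishes $\hrzn{C} \subseteq \hrzn{(K+B_r)} = K$ directly, while your second direction manufactures the ray $x+\alpha u$ inside $C$ as a limit of the convex combinations $(1-\alpha/t)x + (\alpha/t)c_t$ with $\normE{c_t - tu} \leq r$, which is the nontrivial step and correctly compensates for the fact that $C$, unlike $K$, cannot simply be rescaled. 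What your version buys is self-containedness: only closedness, convexity, and explicit limits are used, with no external recession-cone machinery. What the paper's version buys is brevity and a cleaner conceptual framing (the horizon cone as an invariant under bounded perturbations), with the limits hidden inside the cited corollary. Both arguments rely, as you note, on $\hrzn{C}(x) = \hrzn{C}$ for closed convex $C$ (the paper records this via Rockafellar--Wets, Theorem 3.6), and both genuinely use each of the two inclusions supplied by finiteness of the Hausdorff distance, one per direction, exactly as you observe.
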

\begin{proof}
    Let $r = \dist{C, K}$. We have $\hrzn{C} \subseteq \hrzn{(K + B_r)} = K \subseteq \hrzn{(C + B_r)}  = \hrzn{C} $
    where the inclusions follow because $C \subseteq K + B_r$ and $K \subseteq C + B_r$, while the equalities follow by \cite[Corollary 9.1.2]{cvxAnalysisRockafellar} 
    
    % Proposition~\ref{prop: horizonConePreservedUnderBallAddition}. %\cite[Exercise 3.12]{rockafellar2009}.
\end{proof}

\subsubsection{Proof of Theorem~\ref{thm: optimalSigmaInnerSmoothing}}
Our proof is broken into several lemmas which combine together to immediately yield the theorem. Recall, due to Lemma~\ref{lem: scalingOperation}, it suffices to consider only $\beta = 1$. %Further, since $f \mapsto \scaleOp{f}{\beta}$ is a monotone and invertible transformation, this reduction to $\beta = 1$ also applies to statements concerning the ordering relation $\leq$.
We begin by showing in Lemma~\ref{lem: bigCapSigmaInnerSmoothing} that $\sIn{F}{\sigma}$ is an inner $1$-smoothing that lower bounds any other inner $1$-smoothing of $\sigma$. 

\begin{lemma}\label{lem: bigCapSigmaInnerSmoothing}
    $\sIn{F}{\sigma}$ is an inner $1$-smoothing of $\sigma$ with $\dist{\sIn{F}{\sigma}, \sigma} \geq \cGap{\sigma}$. Further, $\sIn{F}{\sigma} \leq f$ for any convex inner $1$-smoothing $f$.
\end{lemma}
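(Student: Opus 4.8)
The plan is to reduce all three claims to the single epigraphical identity $\epi \sIn{F}{\sigma} = \core{\sigma} + \epi \frac{1}{2}\normE{\cdot}^2$, which I would obtain by combining the definition $\sIn{F}{\sigma} = \infConv{\price{\sigma}}{\frac{1}{2}\normE{\cdot}^2}$, Lemma~\ref{lem: coreSigmaIsPriceEpigraph} (so that $\epi \price{\sigma} = \core{\sigma}$), and the epigraph-sum rule \eqref{epiAddition}. Once this is in hand, each assertion becomes a short geometric statement about $\core{\sigma}$.

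First I would verify that $\sIn{F}{\sigma}$ is an inner $1$-smoothing. Its $1$-smoothness is the standard property of the Moreau envelope of the finite convex function $\price{\sigma}$ (finite because $\partial \sigma(0)$ is compact). The inequality $\sIn{F}{\sigma} \geq \sigma$, i.e. $\epi \sIn{F}{\sigma} \subseteq \epi \sigma$, is immediate from the displayed identity together with the very definition of $\core{\sigma}$, whose points $(x,r)$ satisfy $(x,r) + \epi \frac{1}{2}\normE{\cdot}^2 \subseteq \epi \sigma$. Finiteness of $\dist{\sIn{F}{\sigma}, \sigma}$ follows by sandwiching: the pointwise bounds $\sigma \leq \price{\sigma} \leq \sigma + \frac{1}{2}\max_{\zeta \in \partial \sigma(0)}\normE{\zeta}^2$, together with $\sigma \leq \sIn{F}{\sigma} \leq \price{\sigma}$ (the upper bound from taking $y = x$ in the infimal convolution), bound the distance by $\frac{1}{2}\max_{\zeta \in \partial\sigma(0)}\normE{\zeta}^2 < \infty$.

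For the lower bound $\dist{\sIn{F}{\sigma}, \sigma} \geq \cGap{\sigma}$ I would simply evaluate at the origin. Since $\sigma(0) = 0$ and $\sIn{F}{\sigma}(0) = \inf_y \price{\sigma}(y) + \frac{1}{2}\normE{y}^2 = \cGap{\sigma}$ by \eqref{focusFormula} and the definition of $\cGap{\sigma}$, and since $\sIn{F}{\sigma} \geq \sigma$ makes the distance a supremum of nonnegative gaps, the single point $x = 0$ already forces $\dist{\sIn{F}{\sigma}, \sigma} \geq \sIn{F}{\sigma}(0) - \sigma(0) = \cGap{\sigma}$.

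The crux is minimality: $\sIn{F}{\sigma} \leq f$ for every convex inner $1$-smoothing $f$, equivalently $\epi f \subseteq \core{\sigma} + \epi \frac{1}{2}\normE{\cdot}^2$. The idea is that $1$-smoothness means a translated paraboloid touches the graph of $f$ from above at every point, and since $f \geq \sigma$ pushes that paraboloid's epigraph into $\epi \sigma$, its vertex must land in $\core{\sigma}$. Concretely, fixing $x_0$, the smoothness bound \eqref{eq: smoothnessQuadBound} gives $f(\cdot) \leq f(x_0) + \ip{\nabla f(x_0), \cdot - x_0} + \frac{1}{2}\normE{\cdot - x_0}^2$, and completing the square rewrites the right-hand side as $s + \frac{1}{2}\normE{\cdot - v}^2$ with $v = x_0 - \nabla f(x_0)$ and $s = f(x_0) - \frac{1}{2}\normE{\nabla f(x_0)}^2$. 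Hence $\epi[s + \frac{1}{2}\normE{\cdot - v}^2] \subseteq \epi f \subseteq \epi \sigma$, which says exactly $(v,s) \in \core{\sigma}$, while the paraboloid's vertex value at $x_0$ equals $f(x_0)$, so $(x_0, f(x_0)) = (v,s) + (\nabla f(x_0), \frac{1}{2}\normE{\nabla f(x_0)}^2) \in \core{\sigma} + \epi \frac{1}{2}\normE{\cdot}^2 = \epi \sIn{F}{\sigma}$, yielding $\sIn{F}{\sigma}(x_0) \leq f(x_0)$. The hard part is recognizing this paraboloid-fitting picture; the one point requiring care is that the fitted paraboloid's vertex sits exactly at $(x_0, f(x_0))$, which is what upgrades the epigraph inclusion into the pointwise inequality rather than something weaker.
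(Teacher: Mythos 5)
Your proof is correct, and while the first three claims (smoothness, $\sIn{F}{\sigma}\geq\sigma$ via $\epi\sIn{F}{\sigma}=\core{\sigma}+\epi\frac{1}{2}\normE{\cdot}^2$, and the lower bound by evaluating at the origin) match the paper exactly, your minimality argument takes a genuinely different route. The paper invokes \cite[Proposition 12.60]{rockafellar2009} in its representational direction: any $1$-smooth convex $f$ equals $\infConv{g}{\frac{1}{2}\normE{\cdot}^2}$ for some convex $g$, whence $\epi g \subseteq \core{\sigma} = \epi\price{\sigma}$ gives $g \geq \price{\sigma}$, and monotonicity of infimal convolution yields $f \geq \sIn{F}{\sigma}$. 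You instead prove the epigraph inclusion $\epi f \subseteq \core{\sigma}+\epi\frac{1}{2}\normE{\cdot}^2$ pointwise: at each $x_0$, the quadratic upper bound \eqref{eq: smoothnessQuadBound} produces a paraboloid $s+\frac{1}{2}\normE{\cdot - v}^2$ with $v = x_0-\nabla f(x_0)$ and $s = f(x_0)-\frac{1}{2}\normE{\nabla f(x_0)}^2$ whose epigraph lies in $\epi f \subseteq \epi\sigma$, so $(v,s)\in\core{\sigma}$, and the exact decomposition $(x_0,f(x_0)) = (v,s)+(\nabla f(x_0),\frac{1}{2}\normE{\nabla f(x_0)}^2)$ delivers $\sIn{F}{\sigma}(x_0)\leq f(x_0)$. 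This is in fact the same device the paper itself uses later, at the single point $x_0=0$, in the proof of Lemma~\ref{lem: lowCapSigmaInnerSmoothing} (display \eqref{eq: focusGivesQuadUpperBoundAtZero}); you apply it at every point, making the minimality proof self-contained and avoiding the proximal-hull representation theorem, at the cost of being slightly longer than the paper's two-line deduction. Your explicit finiteness check $\sigma \leq \sIn{F}{\sigma} \leq \sigma + \frac{1}{2}\max_{\zeta\in\partial\sigma(0)}\normE{\zeta}^2$ is a welcome addition the paper leaves implicit. One wording nit: in your final sentence the point $(x_0, f(x_0))$ is where the paraboloid \emph{touches} the graph of $f$, not its vertex --- the vertex is $(v,s)$ --- but your displayed computation is correct, so this is cosmetic.
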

\begin{proof}
    $\sIn{F}{\sigma} = \infConv{\price{\sigma}}{\frac{1}{2}\normE{\cdot}^2}$ is $1$-smooth by \cite[Proposition 12.60]{rockafellar2009}. The inequality $\sIn{F}{\sigma} \geq \sigma$ follows because 
    \begin{align*}
        \epi \sIn{F}{\sigma} = \epi \price{\sigma} + \epi \frac{1}{2}\normE{\cdot }^2 \subseteq \epi \sigma
    \end{align*}
    where the equality follows by \eqref{epiAddition} and the inclusion follows because $\epi \price{\sigma} = \core{\sigma}$ by Lemma~\ref{lem: coreSigmaIsPriceEpigraph}. As a result, we have 
    \[
    \dist{\sIn{F}{\sigma}, \sigma} \geq \sIn{F}{\sigma}(0) - \sigma(0) = \cGap{\sigma}
    \]
    where the inequality holds because $\sIn{F}{\sigma} \geq \sigma$ and the equality follows from Equation~\eqref{focusFormula} and the definition of $\sIn{F}{\sigma}$ and the fact that $\sigma(0) = 0$.
    Now let $f : \varspace \to \RR$ be any $1$-smooth convex function such that $f \geq \sigma$. Then $f = \infConv{g}{\frac{1}{2}\normE{\cdot}^2}$ for some convex function $g$, by \cite[Proposition 12.60]{rockafellar2009}. Note that $\epi g \subseteq \core{\sigma}$ as \eqref{epiAddition} ensures $\epi g + \epi \frac{1}{2}\normE{\cdot}^2 = \epi f \subseteq \epi \sigma$. Since $\core{\sigma} = \epi \price{\sigma}$ by Lemma~\ref{lem: coreSigmaIsPriceEpigraph}, it follows that $g \geq \price{\sigma}$. Hence $f = \infConv{g}{\frac{1}{2}\normE{\cdot}^2}\geq  \infConv{\price{\sigma}}{\frac{1}{2}\normE{\cdot}^2}=\sIn{F}{\sigma}$.
\end{proof}

We next show in Lemma~\ref{lem: lowCapSigmaInnerSmoothing} that $\sIn{f}{\sigma}$ is an inner $1$-smoothing with that upper bounds all inner $1$-smoothings $f$ satisfying $\dist{f, \sigma} \leq \dist{\sIn{f}{\sigma}, \sigma}$. Since any optimal $f$ satisfies $\dist{f, \sigma} \leq \dist{\sIn{f}{\sigma}, \sigma}$ by definition, this proves that $\sIn{f}{\sigma}$ upper bounds optimal smoothings.
\begin{lemma}\label{lem: lowCapSigmaInnerSmoothing}%\label{lem: optSigmaInnerSmoothingDist}
    $\sIn{f}{\sigma}$ is an inner $1$-smoothing of $\sigma$ with $\dist{\sIn{f}{\sigma}, \sigma} \leq \cGap{\sigma}$. Further, $\sIn{f}{\sigma}\geq f$ for any convex, inner $1$-smoothing $f$ with $\dist{f, \sigma} \leq \cGap{\sigma}$.
\end{lemma}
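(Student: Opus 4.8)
The plan is to prove the two assertions separately, dispatching the membership claim first and then the sharper upper bound, which is the heart of the lemma.

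For the first assertion I would argue exactly as in Lemma~\ref{lem: bigCapSigmaInnerSmoothing}: $\sIn{f}{\sigma} = \infConv{(r_\sigma + \sigma(\cdot - \cCenter{\sigma}))}{\frac{1}{2}\normE{\cdot}^2}$ is $1$-smooth by \cite[Proposition 12.60]{rockafellar2009}. To see it is an inner smoothing, use \eqref{epiAddition} to write $\epi \sIn{f}{\sigma} = ((\cCenter{\sigma}, r_\sigma) + \epi \sigma) + \epi \frac{1}{2}\normE{\cdot}^2$, rearrange as $\epi \sigma + ((\cCenter{\sigma}, r_\sigma) + \epi \frac{1}{2}\normE{\cdot}^2)$, and note that $(\cCenter{\sigma}, r_\sigma) \in \core{\sigma}$ gives $(\cCenter{\sigma}, r_\sigma) + \epi \frac{1}{2}\normE{\cdot}^2 \subseteq \epi \sigma$; since $\epi \sigma$ is a convex cone the sum stays in $\epi \sigma$, so $\sIn{f}{\sigma} \geq \sigma$. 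The distance bound then follows by evaluating the infimal convolution at the feasible point $y = x + \cCenter{\sigma}$, which yields $\sIn{f}{\sigma}(x) \leq r_\sigma + \sigma(x) + \frac{1}{2}\normE{\cCenter{\sigma}}^2 = \sigma(x) + \cGap{\sigma}$ pointwise, whence $\dist{\sIn{f}{\sigma}, \sigma} \leq \cGap{\sigma}$.

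For the upper bound I would pass to Fenchel conjugates. Writing $S = \partial \sigma(0)$ and using $\sigma^* = \delta_S$, a direct computation gives $(\sIn{f}{\sigma})^* = q + \delta_S$, where $q(\zeta) = \frac{1}{2}\normE{\zeta + \cCenter{\sigma}}^2 - \cGap{\sigma}$. Now let $f$ be any inner $1$-smoothing with $\dist{f, \sigma} \leq \cGap{\sigma}$, i.e. $\sigma \leq f \leq \sigma + \cGap{\sigma}$; conjugating these inequalities yields $f^* \leq 0$ on $S$, $f^* \geq -\cGap{\sigma}$ on $S$, and $f^* = +\infty$ off $S$. Since $(\sIn{f}{\sigma})^*$ equals $q$ on $S$ and $+\infty$ off $S$, biconjugation reduces the goal $f \leq \sIn{f}{\sigma}$ to proving $f^* \geq q$ on $S$. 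I would study $\psi := f^* - q$, which is convex: writing $f = \infConv{g}{\frac{1}{2}\normE{\cdot}^2}$ via \cite[Proposition 12.60]{rockafellar2009} makes $f^* = g^* + \frac{1}{2}\normE{\cdot}^2$, while $q - \frac{1}{2}\normE{\cdot}^2$ is affine, so $\psi$ is a convex function minus an affine one.

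The crux is to show $\psi \geq 0$ on $S$, and the key geometric input is that $-\cCenter{\sigma} \in \mathrm{conv}\, A$, where $A = \{\zeta \in S : \frac{1}{2}\normE{\zeta + \cCenter{\sigma}}^2 = \cGap{\sigma}\}$ is exactly the active set $\{\zeta \in S : \ip{\zeta, \cCenter{\sigma}} + \frac{1}{2}\normE{\zeta}^2 = r_\sigma\}$; this membership is the optimality condition $-\cCenter{\sigma} \in \partial \price{\sigma}(\cCenter{\sigma})$ coming from \eqref{focusFormula}. On $A$ we have $q = 0$, hence $\psi = f^* \leq 0$ there, so convexity together with $-\cCenter{\sigma} \in \mathrm{conv}\, A$ gives $\psi(-\cCenter{\sigma}) \leq 0$. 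Finally, for arbitrary $\zeta \in S$ and $\lambda \in (0,1]$ (with $-\cCenter{\sigma} + \lambda(\zeta + \cCenter{\sigma}) \in S$ by convexity of $S$), I would combine the chord bound $\psi(-\cCenter{\sigma} + \lambda(\zeta + \cCenter{\sigma})) \leq \lambda \psi(\zeta)$, which uses $\psi(-\cCenter{\sigma}) \leq 0$, with the quadratic lower bound $\psi \geq -\frac{1}{2}\normE{\cdot + \cCenter{\sigma}}^2$ (equivalent to $f^* \geq -\cGap{\sigma}$) at the same point, reading $-\frac{\lambda^2}{2}\normE{\zeta + \cCenter{\sigma}}^2 \leq \lambda \psi(\zeta)$; dividing by $\lambda$ and letting $\lambda \to 0^+$ forces $\psi(\zeta) \geq 0$. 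This gives $f^* \geq (\sIn{f}{\sigma})^*$ and hence $f \leq \sIn{f}{\sigma}$. I expect this final limiting step — playing the linear chord bound off against the quadratic lower bound near the center $-\cCenter{\sigma}$ — to be the one genuinely delicate point, with everything else being routine conjugate calculus.
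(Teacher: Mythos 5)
Your proof is correct, and its second half takes a genuinely different route from the paper's. For the first half you coincide with the paper on smoothness and the inner property, and your distance bound is in fact slicker: plugging the feasible point $y = x + \cCenter{\sigma}$ into the infimal convolution gives $\sIn{f}{\sigma} \leq \sigma + \cGap{\sigma}$ in one line, where the paper instead evaluates the Moreau envelope exactly via Lemma~\ref{lem: MoreauOfSublinear} and then applies a subgradient inequality. The real divergence is in the maximality claim. The paper argues in the primal: it decomposes $\epi \sIn{f}{\sigma} = (\cCenter{\sigma}, r_\sigma) + \epi \frac{1}{2}\normE{\cdot}^2 + \hrzn{(\epi f)}$ using Lemma~\ref{lem: distToConeFixesHorizon}, uses the $1$-smoothness quadratic upper bound at the \emph{single} point $x=0$ to produce $\left(-\nabla f(0),\, f(0) - \frac{1}{2}\normE{\nabla f(0)}^2\right) \in \core{\sigma}$, and then identifies this point as exactly $(\cCenter{\sigma}, r_\sigma)$ via $f(0) \leq \cGap{\sigma}$ and the strong-convexity uniqueness behind \eqref{focusFormula}. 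You instead pass to conjugates: your identity $(\sIn{f}{\sigma})^* = q + \delta_{\partial\sigma(0)}$ with $q = \frac{1}{2}\normE{\cdot + \cCenter{\sigma}}^2 - \cGap{\sigma}$ checks out, the sandwich $\sigma \leq f \leq \sigma + \cGap{\sigma}$ does yield $f^* \leq 0$ and $f^* \geq -\cGap{\sigma}$ on $S = \partial\sigma(0)$ with $f^* \equiv +\infty$ off $S$, convexity of $\psi = f^* - q$ follows from Proposition 12.60 (equivalently, $1$-strong convexity of $f^*$), and the chord-versus-quadratic limiting argument at $-\cCenter{\sigma}$ is sound, since $f^*$ is proper (so $\psi > -\infty$) and $A \neq \emptyset$ by compactness of $\partial \sigma(0)$. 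The one step you should flesh out is the membership $-\cCenter{\sigma} \in \mathrm{conv}\, A$: it requires the max-rule $\partial \price{\sigma}(\cCenter{\sigma}) = \mathrm{conv}\set{\zeta \in \partial\sigma(0) \text{ active at } \cCenter{\sigma}}$ for a supremum of affine functions over the compact index set $\partial \sigma(0)$ (Danskin/Ioffe--Tikhomirov), combined with the first-order condition $-\cCenter{\sigma} \in \partial \price{\sigma}(\cCenter{\sigma})$ from \eqref{focusFormula}; both are standard, but this is precisely where the optimality of the center enters your argument, playing the role that the uniqueness computation plays in the paper's. In terms of trade-offs: the paper's primal proof is shorter given the machinery already in place and, strikingly, uses smoothness of $f$ only at the origin; your dual proof avoids horizon cones entirely and produces the reusable conjugate formula for $\sIn{f}{\sigma}$, at the cost of the subdifferential max-rule and the limiting $\lambda \to 0^+$ argument.
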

\begin{proof}
    $\sIn{f}{\sigma}$ is $1$-smooth by \cite[Proposition 12.60]{rockafellar2009}. The inequality $\sIn{f}{\sigma} \geq \sigma$ follows because 
    \begin{align*}
        \epi \sIn{f}{\sigma} = \epi (r_\sigma + \sigma(\cdot - \cCenter{\sigma})) + \epi \frac{1}{2}\normE{\cdot }^2 = (\cCenter{\sigma}, r_\sigma) + \epi \sigma + \epi \frac{1}{2}\normE{\cdot }^2 \subseteq \epi \sigma + \epi \sigma = \epi \sigma
    \end{align*}
    where the first equality follows by \eqref{epiAddition}, the inclusion follows because $(\cCenter{\sigma}, r_\sigma) \in \core{\sigma}$ ensures $(\cCenter{\sigma}, r_\sigma) + \epi \frac{1}{2}\normE{\cdot }^2 \subseteq \epi \sigma$, and the last equality follows because $\epi \sigma$ is a convex cone.
     Therefore
    \begin{align*}
        \dist{\sIn{f}{\sigma}, \sigma} 
        & = \sup_{x \in \varspace} \sIn{f}{\sigma}(x) - \sigma(x) \\
        & = \sup_{x \in \varspace}r_\sigma + \frac{1}{2}\normE{x - \cCenter{\sigma}}^2 - \frac{1}{2}\normE{x - \cCenter{\sigma} - P_{\partial \sigma(0)}(x - \cCenter{\sigma})}^2 - \sigma(x)\\
        & = \sup_{x \in \varspace} r_\sigma + \frac{1}{2} \normE{\cCenter{\sigma}}^2 - \frac{1}{2}\normE{\cCenter{\sigma} + P_{\partial \sigma(0)}(x - \cCenter{\sigma})}^2  + \ip{P_{\partial \sigma(0)}(x - \cCenter{\sigma}), x} - \sigma(x) \\
        & \leq r_\sigma + \frac{1}{2} \normE{\cCenter{\sigma}}^2 \\
        & = \cGap{\sigma}
    \end{align*}
    where the second equality follows by \eqref{MoreauOfSigma}, the inequality follows in part because of the subgradient inequality $\sigma(x) \geq \sigma(0) +  \ip{P_{\partial \sigma(0)}(x - \cCenter{\sigma}), x}$, and the last equality holds by definition of $\cGap{\sigma}$.

    For the second part of the Lemma, note that if $\dist{f, \sigma} \leq \cGap{\sigma}$, then
    \begin{align*}
        \epi \sIn{f}{\sigma} 
        = \epi (r_\sigma + \sigma(\cdot - \cCenter{\sigma})) + \epi \frac{1}{2}\normE{\cdot}^2 
        & = (\cCenter{\sigma}, r_\sigma) + \epi \frac{1}{2}\normE{\cdot}^2 + \epi \sigma \\
        & = (\cCenter{\sigma}, r_\sigma) + \epi \frac{1}{2}\normE{\cdot}^2 + \hrzn{(\epi f)}
    \end{align*}
    
    where the first equality follows by \eqref{epiAddition} and the last equality follows because Lemma~\ref{lem: distToConeFixesHorizon} ensures that $\epi \sigma = \hrzn{(\epi f)}$ as $\dist{\epi f, \epi \sigma} \leq \dist{f, \sigma} \leq  \cGap{\sigma}$. Therefore, to prove that $\epi \sIn{f}{\sigma} \subseteq \epi f$, it suffices to show that $(\cCenter{\sigma}, r_\sigma) + \epi \frac{1}{2}\normE{\cdot}^2 \subseteq \epi f$. We next show that this inclusion holds. Recall that $1$-smoothness implies $f(x) \leq f(0) + \ip{\nabla f(0), x} + \frac{1}{2}\normE{x}^2$ for all $x \in \varspace$ by \eqref{eq: smoothnessQuadBound}. Therefore
    \begin{align}
        \left(- \nabla f(0), f(0)- \frac{1}{2}\normE{\nabla f(0)}^2\right) + \epi \frac{1}{2}\normE{\cdot}^2 \subseteq \epi f \subseteq \epi \sigma \label{eq: focusGivesQuadUpperBoundAtZero}
    \end{align}
    where the first inclusion is by the above quadratic bound while the second follows by assumption. Our desired inclusion holds because of \eqref{eq: focusGivesQuadUpperBoundAtZero} and the next calculation, which shows that $\cCenter{\sigma} = - \nabla f(0)$ and $r_\sigma = f(0)- \frac{1}{2}\normE{\nabla f(0)}^2$. We have
    \begin{align*}
        r_\sigma + \frac{1}{2}\normE{\cCenter{\sigma}}^2 \leq f(0) - \frac{1}{2}\normE{\nabla f(0)}^2 + \frac{1}{2}\normE{\nabla f(0)}^2 \leq r_\sigma + \frac{1}{2}\normE{\cCenter{\sigma}}^2 
    \end{align*}
    where the first inequality holds because $\left(- \nabla f(0), f(0)- \frac{1}{2}\normE{\nabla f(0)}^2\right) \in \core{\sigma}$ by \eqref{eq: focusGivesQuadUpperBoundAtZero}, while the second follows because $f(0) = |f(0) - \sigma(0)| \leq \dist{f, \sigma} \leq \cGap{\sigma}$. Equation~\eqref{focusFormula} then ensures $\cCenter{\sigma} = -\nabla f(0)$ and $r_\sigma = f(0)- \frac{1}{2}\normE{\nabla f(0)}^2$ as claimed. 
\end{proof}

Now, by Lemmas~\ref{lem: bigCapSigmaInnerSmoothing}~and~\ref{lem: lowCapSigmaInnerSmoothing}, the necessity of the condition $\sIn{F}{\sigma} \leq f \leq \sIn{f}{\sigma}$ is proved. For sufficiency, we note that if $f$ is a convex inner $1$-smoothing satisfying $\sIn{F}{\sigma} \leq f \leq \sIn{f}{\sigma}$, then
\begin{align*}
    \cGap{\sigma} \leq \dist{\sIn{F}{\sigma}, \sigma} \leq \dist{f, \sigma} \leq \dist{\sIn{f}{\sigma}, \sigma} \leq \cGap{\sigma}
\end{align*}
where the first inequality follows by Lemma~\ref{lem: bigCapSigmaInnerSmoothing}, and the second and third inequalities follow by assumption, while the last inequality by Lemma~\ref{lem: lowCapSigmaInnerSmoothing}. Thus $\dist{f, \sigma} = \dist{\sIn{F}{\sigma}, \sigma}$, hence $f$ is optimal as Lemma~\ref{lem: bigCapSigmaInnerSmoothing} ensures $\sIn{F}{\sigma}$ is optimal. This concludes the proof of the second statement of the theorem.

The first part of the theorem also follows directly from Lemmas~\ref{lem: bigCapSigmaInnerSmoothing}~and~\ref{lem: lowCapSigmaInnerSmoothing}. Note that $\sigma$ is inner $\lambda$-smoothable for all $\lambda \geq \cGap{\sigma}$ because $\scaleOp{\sIn{f}{\sigma}}{1/\beta}$ is an inner $(\cGap{\sigma}, \beta)$-smoothing of $\sigma$ for all $\beta > 0$, by Lemma~\ref{lem: lowCapSigmaInnerSmoothing} and Lemma~\ref{lem: scalingOperation}. On the other hand, if $\sigma$ is $\lambda$-smoothable, then $\lambda \geq \cGap{\sigma}$ because $\cGap{\sigma} \leq \dist{f, \sigma} \leq \lambda$ for any $(\lambda, 1)$-smoothing $f$, where the first inequality follows by Lemma~\ref{lem: bigCapSigmaInnerSmoothing}.

\subsubsection{Proof of Theorem~\ref{thm: optimalConeInnerSmoothing}} The proof of this theorem follows the same style as that of Theorem~\ref{thm: optimalSigmaInnerSmoothing}. As in that proof, we restrict to the case $\beta = 1$ without loss of generality and break the proof into two key lemmas. The first, Lemma~\ref{lem: bigCapConeInnerSmoothing}, proves the inclusion $S \subseteq \sIn{S}{K}$ for any convex inner $1$-smoothing $S$. The second, Lemma~\ref{lem: lowCapConeInnerSmoothing}, proves the inclusion $\sIn{s}{K} \subseteq S$ for any optimal smoothing $S$.
\begin{lemma}\label{lem: bigCapConeInnerSmoothing}
    $\sIn{S}{K}$ is an inner $1$-smoothing of $K$ with $\dist{\sIn{S}{K}, K} \geq \cGap{K}$. Further, $S \subseteq \sIn{S}{K}$ for any closed convex inner $1$-smoothing $S$.
\end{lemma}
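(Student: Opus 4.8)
The plan is to mirror the proof of Lemma~\ref{lem: bigCapSigmaInnerSmoothing}, translating each epigraph operation into its Minkowski counterpart: infimal convolution with $\frac{1}{2}\normE{\cdot}^2$ becomes Minkowski addition of $B(0,1)$, the functional core $\core{\sigma}$ becomes the conic core $\core{K}$, and the representation \cite[Proposition 12.60]{rockafellar2009} of every $1$-smooth convex function as a Moreau envelope becomes the statement that every $1$-smooth closed convex set $S$ can be written as $S = T + B(0,1)$ for some closed convex $T$. Recall throughout that $\sIn{S}{K} = \bgInner{K} = \core{K} + B(0,1)$.

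First I would verify that $\sIn{S}{K}$ is an inner $1$-smoothing. It is $1$-smooth by \cite[Lemma 9]{liu2024gauges}, since $\core{K}$ is closed and convex (Lemma~\ref{lem: polyhedralExpressionOfCore}) and we are adding the unit ball. It satisfies $\sIn{S}{K} \subseteq K$ directly from the definition of $\core{K}$: each $x \in \core{K}$ has $x + B(0,1) \subseteq K$, so $\core{K} + B(0,1) \subseteq K$. For finiteness of the distance, I would observe that $\cCenter{K} + K \subseteq \core{K}$ (as $\cCenter{K} \in \core{K}$ and $K$ is closed under addition), whence $\cCenter{K} + K \subseteq \sIn{S}{K}$ and every $y \in K$ lies within $\normE{\cCenter{K}}$ of $\sIn{S}{K}$.

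For the distance lower bound I would evaluate at $0 \in K$. Writing $\sIn{S}{K} = \set{z \mid \dist{z, \core{K}} \leq 1}$ and using that $\cCenter{K} = P_{\core{K}}(0)$ realizes $\dist{0, \core{K}} = \normE{\cCenter{K}}$, I get $\dist{0, \sIn{S}{K}} = (\normE{\cCenter{K}} - 1)_+ = \cGap{K}$, where $\normE{\cCenter{K}} \geq 1$ follows from Lemma~\ref{lem: polyhedralExpressionOfCore} and Cauchy--Schwarz. Since $\sIn{S}{K} \subseteq K$ gives $\dist{\sIn{S}{K}, K} = \onedist{K}{\sIn{S}{K}} \geq \dist{0, \sIn{S}{K}}$, this proves $\dist{\sIn{S}{K}, K} \geq \cGap{K}$.

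For the final inclusion, let $S$ be any closed convex inner $1$-smoothing. The key step is to represent $S = T + B(0,1)$ with $T$ closed convex; then $T + B(0,1) = S \subseteq K$ forces $t + B(0,1) \subseteq K$ for each $t \in T$, i.e.\ $T \subseteq \core{K}$, and hence $S = T + B(0,1) \subseteq \core{K} + B(0,1) = \sIn{S}{K}$, exactly paralleling the deduction $\epi g \subseteq \core{\sigma}$ in the function proof. The main obstacle is establishing this representation --- the set analogue of Proposition 12.60 --- which I would prove from the smoothness characterization~\eqref{setSmoothnessEquivalence} with $T = \core{S}$. The inclusion $\core{S} + B(0,1) \subseteq S$ is automatic; for the reverse, note $\dist{S, K} < \infty$ forces $\hrzn{S} = K$ (Lemma~\ref{lem: distToConeFixesHorizon}), so $S$ is full-dimensional since $K$ has nonempty interior. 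Then each boundary point $x$ has a unit outer normal $\zeta \in N_S(x)$ with $B(x - \zeta, 1) \subseteq S$ by~\eqref{setSmoothnessEquivalence}, giving $x = (x - \zeta) + \zeta \in \core{S} + B(0,1)$; interior points would be covered by a line argument through $x$ and an interior point $s$, using convexity when the line meets $\bdry S$ twice and using $\hrzn{\core{S}} = \hrzn{S}$ when the direction recedes. Reconciling these recession-cone facts with full-dimensionality is the delicate part of the whole argument.
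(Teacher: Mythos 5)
Your proposal is correct and follows the same skeleton as the paper's proof: inner containment directly from the definition of $\core{K}$, the lower bound $\dist{\sIn{S}{K}, K} \geq \cGap{K}$ by evaluating the distance from $0 \in K$ (the paper does this via the projection formula \eqref{eqn: ballPlusSetProjection}, you via the equivalent identity $\dist{0, \core{K} + B(0,1)} = (\normE{\cCenter{K}} - 1)_+$; both reduce to the same computation), and maximality via the representation $S = T + B(0,1)$ with $T$ closed convex, which forces $T \subseteq \core{K}$. The one genuine divergence is that the paper simply cites \cite[Proposition 3]{liu2024gauges} for this representation (and for $1$-smoothness of $\core{K} + B(0,1)$), whereas you reprove it from \eqref{setSmoothnessEquivalence} with $T = \core{S}$. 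Your sketch of that reproof is sound: boundary points are handled by the supporting-hyperplane normal plus the ball inclusion of \eqref{setSmoothnessEquivalence}; interior points follow from the chord argument through a boundary point, with the receding case settled by $\hrzn{\core{S}} = \hrzn{S}$ (which holds since $z + B(0,1) \subseteq S$ and $u \in \hrzn{S}$ give $z + tu + B(0,1) \subseteq S$); and full-dimensionality of $S$ follows from Lemma~\ref{lem: distToConeFixesHorizon} and $\interior K \neq \emptyset$, as you say. So your version buys self-containment — a complete proof of the set-analogue of \cite[Proposition 12.60]{rockafellar2009} rather than an appeal to prior work — at the cost of carrying out the "delicate part" that the paper deliberately outsources; a nice incidental bonus is that your observation $\cCenter{K} + K \subseteq \core{K}$ gives an explicit finite upper bound $\onedist{K}{\sIn{S}{K}} \leq \normE{\cCenter{K}}$, which the paper leaves implicit (it is recovered there only via Lemma~\ref{lem: lowCapConeInnerSmoothing} and monotonicity of $\dist{\cdot, K}$ on subsets of $K$).
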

\begin{proof}
    Clearly $\sIn{S}{K} = \core{K} + B(0, 1) \subseteq K$ by definition of $\core{K}$. Since $\core{K}$ is convex and closed, $\sIn{S}{K}$ is $1$-smooth by \cite[Proposition 3]{liu2024gauges}. We also have
    \[
    \dist{\sIn{S}{K}, K} \geq \normE{P_{\sIn{S}{K}}(0)} = \normE{P_{\core{K}}(0) + P_{B_1}(- P_{\core{K}}(0))} = \normE{\cCenter{K} - P_{B_1}(-\cCenter{K})} = \normE{\cCenter{K}} - 1
    \]
    where the inequality holds because $0 \in K$, the first equality holds by Equation~\eqref{eqn: ballPlusSetProjection}, the second equality holds by definition of $\cCenter{K}$, while the last equality holds because $P_{B_1}(-\cCenter{K}) = -\frac{\cCenter{K}}{\normE{\cCenter{K}}}$ as $K \neq \varspace$ implies $\normE{\cCenter{K}} \geq 1$. Since $\cGap{K} = \normE{\cCenter{K}} - 1$, the claimed $\dist{\sIn{S}{K}, K} \geq \cGap{K}$ holds.
    % \[
    % \dist{\sIn{S}{K}, K} \geq \normE{P_{\sIn{S}{K}}(0)} = \normE{\cCenter{K} -\frac{\cCenter{K}}{\normE{\cCenter{K}}}} = \normE{\cCenter{K}} - 1 = \cGap{K}
    % \]
    % where the first equality follows because $-\cCenter{K} \in N_{\sIn{S}{K}}\left(\cCenter{K} -\frac{\cCenter{K}}{\normE{\cCenter{K}}}\right)$ since for all $x \in \sIn{S}{K}$ it holds that 
    % \begin{align*}
    %     \ip{-\cCenter{K}, x - \left(\cCenter{K} -\frac{\cCenter{K}}{\normE{\cCenter{K}}}\right)} 
    %     &= \ip{-\cCenter{K}, P_{\core{K}}(x) - \cCenter{K}} + \ip{-\cCenter{K}, x - P_{\core{K}}(x)} - \normE{\cCenter{K}} \\
    %     &= \ip{0-P_{\core{K}}(0), P_{\core{K}}(x) - P_{\core{K}}(0)} + \ip{-\cCenter{K}, x - P_{\core{K}}(x)} - \normE{\cCenter{K}} \\
    %     & \leq 0
    % \end{align*}
    % by the Cauchy-Schwarz inequality and the fact that $\sIn{S}{K} = \core{K} + B(0, 1)$ ensures $\dist{\sIn{S}{K}, \core{K}} = 1$.
    Now let $S\subseteq K$ be $1$-smooth. Then there exists a closed convex set $C$ such that $S = C + B(0, 1)$, by \cite[Proposition 3]{liu2024gauges}. Since $C + B(0,1) = S \subseteq K$, it follows that $C \subseteq \core{K}$ by definition of $\core{K}$. Therefore $S = C + B(0, 1) \subseteq \core{K} + B(0,1) = \sIn{S}{K}$, and the proof is complete. 
\end{proof}

\begin{lemma}\label{lem: lowCapConeInnerSmoothing}
    $\sIn{s}{K}$ is an inner $1$-smoothing of $K$ with $\dist{\sIn{s}{K}, K} \leq \cGap{K}$. Further, $\sIn{s}{K} \subseteq S$ for any closed, convex optimal inner $1$-smoothing $S$.
\end{lemma}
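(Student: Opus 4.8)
The plan is to mirror the structure of Lemma~\ref{lem: lowCapSigmaInnerSmoothing}: first verify the claimed properties of $\sIn{s}{K}$, and then pin down the ``center'' of an arbitrary optimal smoothing $S$. For the first part, inner-ness follows from the cone structure: since $\cCenter{K} \in \core{K}$ we have $\cCenter{K} + B(0,1) \subseteq K$, so $\sIn{s}{K} = (\cCenter{K} + K) + B(0,1) = K + (\cCenter{K} + B(0,1)) \subseteq K + K = K$, using that $K$ is closed under addition. Smoothness of $\sIn{s}{K}$ is immediate from \cite[Proposition 3]{liu2024gauges} as it is a Minkowski sum with $B(0,1)$. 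For the distance bound, I would note $\sIn{s}{K} \subseteq K$ forces $\onedist{\sIn{s}{K}}{K} = 0$, so $\dist{\sIn{s}{K}, K} = \onedist{K}{\sIn{s}{K}} = \sup_{x \in K}\dist{x, \sIn{s}{K}}$. Using the erosion identity $\dist{x, C + B(0,1)} = (\dist{x, C} - 1)_+$ together with the bound $\dist{x, \cCenter{K} + K} \le \normE{x - (\cCenter{K} + x)} = \normE{\cCenter{K}}$ (valid since $x \in K$ makes $\cCenter{K} + x \in \cCenter{K} + K$), I obtain $\dist{x, \sIn{s}{K}} \le (\normE{\cCenter{K}} - 1)_+ = \cGap{K}$ for every $x \in K$, which is the claim.

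For the second part, let $S$ be an optimal inner $1$-smoothing. Optimality together with the first part gives $\dist{S, K} \le \dist{\sIn{s}{K}, K} \le \cGap{K}$. By \cite[Proposition 3]{liu2024gauges}, I would write $S = C + B(0,1)$ for a closed convex $C$; since $S \subseteq K$, the definition of $\core{K}$ forces $C \subseteq \core{K}$. The goal then reduces to showing $\cCenter{K} + K \subseteq C$, because this immediately yields $\sIn{s}{K} = (\cCenter{K} + K) + B(0,1) \subseteq C + B(0,1) = S$. Since $\dist{S, K} < \infty$, Lemma~\ref{lem: distToConeFixesHorizon} gives $\hrzn{S} = K$, and adding a bounded ball does not change horizon cones, so $\hrzn{C} = K$ and hence $C + K = C$. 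Thus it suffices to show the single inclusion $\cCenter{K} \in C$.

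The crux is identifying the projection $P_C(0)$ with the conic center. On one hand, $C \subseteq \core{K}$ gives $\dist{0, C} \ge \dist{0, \core{K}} = \normE{\cCenter{K}}$. On the other hand, the erosion identity gives $\dist{0, S} = (\dist{0, C} - 1)_+$, and since $0 \in K$ we have $\dist{0, S} \le \dist{S, K} \le \cGap{K} = \normE{\cCenter{K}} - 1$, so $(\dist{0, C} - 1)_+ \le \normE{\cCenter{K}} - 1$ and thus $\dist{0, C} \le \normE{\cCenter{K}}$. Squeezing the two bounds gives $\dist{0, C} = \normE{\cCenter{K}}$, so $P_C(0) \in C \subseteq \core{K}$ attains the minimal norm over $\core{K}$; by uniqueness of $\cCenter{K}$ as the minimal-norm point of $\core{K}$ (guaranteed by Lemma~\ref{lem: polyhedralExpressionOfCore}), I conclude $P_C(0) = \cCenter{K}$, whence $\cCenter{K} \in C$. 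Combined with $C + K = C$ this yields $\cCenter{K} + K \subseteq C$ and finishes the proof.

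I expect the squeeze argument to be the main obstacle, as it is the step where optimality (via the distance bound $\dist{S,K} \le \cGap{K}$) must be fused with the extremal/uniqueness characterization of the conic center. Some care is needed to justify the erosion identity $\dist{0, C+B(0,1)} = (\dist{0,C}-1)_+$ and to check $\normE{\cCenter{K}} \ge 1$ (using $K \neq \varspace$, as in Lemma~\ref{lem: bigCapConeInnerSmoothing}) so that the $(\cdot)_+$ is inactive and the two norm bounds genuinely pinch to equality.
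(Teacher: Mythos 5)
Your proof is correct, and its key step takes a route that differs in mechanics from the paper's, though the two are structurally parallel. For the containment $\sIn{s}{K} \subseteq S$, the paper argues locally: it picks a unit normal $\zeta \in N_S(P_S(0))$ (first arguing $P_S(0) \in \bdry S$, which requires ruling out $0 \in \interior S$ via $K \neq \varspace$), uses the inscribed-ball characterization of $1$-smoothness \eqref{setSmoothnessEquivalence} to get $B(P_S(0)-\zeta,1)\subseteq S \subseteq K$, and then runs the squeeze $\normE{\cCenter{K}} \leq \normE{P_S(0)-\zeta} = \normE{P_S(0)}+1 \leq \dist{S,K}+1 \leq \normE{\cCenter{K}}$ to identify $P_S(0)-\zeta = \cCenter{K}$ by uniqueness of projections, concluding via $\sIn{s}{K} = B(\cCenter{K},1)+S^{\infty} \subseteq S$ with $S^{\infty}=K$ from Lemma~\ref{lem: distToConeFixesHorizon}. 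You instead work globally with the representation $S = C + B(0,1)$ from \cite[Proposition 3]{liu2024gauges} (the same tool the paper deploys in Lemma~\ref{lem: bigCapConeInnerSmoothing}), reduce the claim to $\cCenter{K} \in C$, and run the analogous squeeze on $\dist{0,C}$ using the erosion identity $\dist{0, C+B(0,1)} = \left(\dist{0,C}-1\right)_+$, which is exactly the pointwise content of \eqref{eqn: ballPlusSetProjection} and Lemma~\ref{lem: distShrinksWithBallAddition}. The two squeezes are the same inequality chain in disguise---your $P_C(0)$ is the paper's $P_S(0)-\zeta$---but your version buys a small simplification in that no boundary/normal-cone argument is needed to produce the candidate point. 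The price is the extra horizon-cone bookkeeping $\hrzn{C} = \hrzn{S} = K$ (so that $C+K=C$), which is justified by \cite[Corollary 9.1.2]{cvxAnalysisRockafellar} as in the proof of Lemma~\ref{lem: distToConeFixesHorizon}; alternatively, you could apply Lemma~\ref{lem: distToConeFixesHorizon} directly to $C$, since $\dist{C,K} \leq \dist{S,K}+1 < \infty$. Your handling of the edge conditions ($\normE{\cCenter{K}} \geq 1$ from $K \neq \varspace$, so the $(\cdot)_+$ is inactive and the bounds pinch) is exactly what is needed, and the first half of your proof (inner-ness, smoothness, and the distance bound via erosion) matches the paper's essentially verbatim.
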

To prove Lemma~\ref{lem: lowCapConeInnerSmoothing}, we use the following useful intermediate lemma.
\begin{lemma}\label{lem: distShrinksWithBallAddition}
    $\onedist{C}{S + B(0, r)} = \left(\onedist{C}{S} - r\right)_+$ for any closed, convex sets $C$ and $S$.
\end{lemma}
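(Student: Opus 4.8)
The plan is to first reduce to a pointwise distance identity and then pass to the supremum. The key observation, recorded in the preliminaries, is that for closed $S$ we have $S + B(0,r) = \set{y \in \varspace \mid \dist{y, S} \leq r}$, so $S + B(0,r)$ is exactly the $r$-sublevel set of the $1$-Lipschitz function $y \mapsto \dist{y,S}$.

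First I would prove the pointwise identity
\[ \dist{x, S + B(0,r)} = \left(\dist{x, S} - r\right)_+ \qquad \text{for every } x \in \varspace. \]
If $\dist{x,S}\le r$, then $x \in S+B(0,r)$ by the characterization above, so both sides vanish. Otherwise $\dist{x,S} > r$; writing $p = P_S(x)$ and moving a distance $r$ from $p$ toward $x$ produces the point $q = p + r(x-p)/\normE{x-p}$, which satisfies $\dist{q,S}\le \normE{q - p} = r$ and $\normE{x - q} = \dist{x,S} - r$. This gives the upper bound $\dist{x, S+B(0,r)} \le \dist{x,S} - r$. For the matching lower bound, any $y \in S + B(0,r)$ admits $s \in S$ with $\normE{y - s}\le r$, so the triangle inequality yields $\normE{x - y} \ge \normE{x - s} - r \ge \dist{x,S} - r$; taking the infimum over such $y$ closes the gap.

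Next I would take the supremum over $x \in C$. Since $\onedist{C}{S+B(0,r)} = \sup_{x \in C}\dist{x, S+B(0,r)} = \sup_{x\in C}\left(\dist{x,S}-r\right)_+$, it remains to commute the supremum with the map $t \mapsto (t-r)_+$. Because this map is nondecreasing and continuous, $\sup_{x\in C}\left(\dist{x,S}-r\right)_+ = \left(\sup_{x\in C}\dist{x,S} - r\right)_+ = \left(\onedist{C}{S} - r\right)_+$, which is the claim. The unbounded case $\onedist{C}{S}=\infty$ is handled by the same monotonicity, with both sides equal to $\infty$.

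The triangle-inequality estimates are entirely routine. The only point requiring a little care is the passage from the pointwise identity to the supremum, namely justifying that $(\cdot - r)_+$ commutes with the supremum: the inequality $\sup_x(\dist{x,S}-r)_+ \le (\sup_x\dist{x,S}-r)_+$ follows from monotonicity applied termwise, while the reverse follows from continuity evaluated along a sequence $x_n \in C$ with $\dist{x_n,S} \to \onedist{C}{S}$.
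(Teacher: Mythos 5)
Your proof is correct, and it shares the paper's two-stage skeleton---first the pointwise identity $\dist{x, S + B(0,r)} = \left(\dist{x, S} - r\right)_+$, then commuting the supremum over $C$ with the nondecreasing continuous map $t \mapsto (t-r)_+$---but you establish the pointwise identity by a genuinely different and more elementary mechanism. The paper obtains it in one line from the projection composition formula \eqref{eqn: ballPlusSetProjection}, namely $P_{B_r + S}(y) = P_S(y) + P_{B_r}\left(y - P_S(y)\right)$, combined with $\normE{z - P_{B_r}(z)} = \left(\normE{z} - r\right)_+$; that composition formula is precisely where convexity of $S$ enters. Your derivation---the explicit witness $q = P_S(x) + r\,(x - P_S(x))/\normE{x - P_S(x)}$ for the upper bound and a triangle-inequality estimate over $y = s + u$ with $s \in S$, $\normE{u} \leq r$ for the lower bound---uses only the existence of a nearest point in $S$, which in finite dimensions holds for any nonempty closed set; uniqueness of the projection plays no role, so your argument remains valid for closed \emph{nonconvex} $S$, where the paper's does not directly apply. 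What the paper's route buys is brevity and reuse: \eqref{eqn: ballPlusSetProjection} is already on the books and is invoked again in the proof of Lemma~\ref{lem: bigCapConeInnerSmoothing}, so the whole proof collapses to a three-line chain of equalities. Your sup-commutation step, including the maximizing-sequence justification and the unbounded case, matches the paper's appeal to continuity of $\alpha \mapsto (\alpha)_+$; both arguments implicitly assume $C$ and $S$ nonempty, as does the statement itself.
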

\begin{proof}[Proof of Lemma~\ref{lem: distShrinksWithBallAddition}]
    This holds by direct calculation as 
\begin{align*}
    \onedist{C}{S + B(0, r)} 
    &= \sup_{y \in C}\normE{y - P_S(y) - P_{B_r}(y - P_S(y))} \\
    &= \sup_{y \in C}\left(\normE{y - P_S(y)} - r\right)_+ \\
    %&= \left(\sup_{y \in \Tilde{C}}\normE{y - P_S(y)} - r\right)_+ \\
    &= \left(\onedist{C}{S} - r\right)_+
\end{align*}
where the first equality holds by \eqref{eqn: ballPlusSetProjection}, the second uses the simple fact that $\normE{z - P_{B_r}(z)} = (\normE{z} - r)_+$ for all $z \in \varspace$, while the last equality holds by continuity of the function $\alpha \mapsto (\alpha)_+ = \max\set{0, \alpha}$.
\end{proof}
\begin{proof}[Proof of Lemma~\ref{lem: lowCapConeInnerSmoothing}]
    $\sIn{s}{K}$ is $1$-smooth by \cite[Proposition 3]{liu2024gauges}. Noting that $\cCenter{K} \in \core{K}$ by definition of $\cCenter{K}$, we have
    \begin{align*}
        \sIn{s}{K} = x_K + B(0, 1) + K \subseteq K + K = K
    \end{align*}
    where the first equality follows by definition of $\sIn{s}{K}$ and the inclusion follows because $\cCenter{K} \in \core{K} \subseteq K$. We also have
    \begin{align*}
        \dist{\sIn{s}{K}, K} = \onedist{K}{\cCenter{K} + K + B(0, 1)} = \left(\onedist{K}{\cCenter{K} + K} - 1\right)_+ \leq \left(\normE{\cCenter{K}} - 1\right)_+ = \cGap{K}
    \end{align*}
    where the first equality holds because $\sIn{s}{K} = \cCenter{K} + K + B(0, 1) \subseteq K$, the second equality holds by Lemma~\ref{lem: distShrinksWithBallAddition}, the inequality holds because $\normE{y - P_{(\cCenter{K} + K)}(y)} \leq \normE{y - (\cCenter{K} + y)} = \normE{\cCenter{K}}$ for all $y \in K$, while the last equality holds by definition of $\cGap{K}$ and the fact that $\normE{\cCenter{K}} \geq 1$ as $\cCenter{K} + B(0,1) \subseteq K \neq \varspace$.

    Now suppose $S$ is a closed convex optimal inner $1$-smoothing. We will show that $B(\cCenter{K}, 1) \subseteq S$. This will imply $\sIn{s}{K} \subseteq S$ since
    \[\sIn{s}{K} = \cCenter{K} + K + B(0,1) = B(\cCenter{K}, 1) + S^{\infty} \subseteq S  \]
    where the second equality holds by Lemma~\ref{lem: distToConeFixesHorizon} as optimality of $S$ ensures $\dist{S, K} \leq \dist{\sIn{s}{K}, K} \leq \cGap{K} <\infty$. Let $\zeta \in N_S(P_S(0))$ have $\normE{\zeta} = 1$. Such $\zeta$ exists because $P_S(0) \in \bdry S$ since  $P_S(0) \in \interior S$ would imply $0 \in \interior S \subseteq \interior K$, which contradicts the assumption that $K \neq \varspace$. We have $B(P_{S}(0) - \zeta, 1) \subseteq S$ by $1$-smoothness of $S$. Therefore $B(\cCenter{K}, 1) \subseteq S$ will be proved once we show that $\cCenter{K} = P_{S}(0) - \zeta$. Observe that $P_S(0) - \zeta \in \core{K}$ because
    \begin{equation*}
        P_S(0) - \zeta + B(0, 1) = B(P_S(0) - \zeta, 1) \subseteq S \subseteq K
    \end{equation*}
    where the first inclusion follows by $1$-smoothness of $S$. Therefore
    \begin{equation}\label{coneDistToInnerSmoothing}
        \normE{\cCenter{K}} \leq \normE{P_{S}(0) - \zeta} = \normE{P_S(0)} + 1 \leq \dist{S, K} + 1 \leq \cGap{K} + 1 = \normE{\cCenter{K}}
    \end{equation}
    where the first inequality holds because $\cCenter{K} = P_{\core{K}}(0)$ and $P_S(0) - \zeta \in \core{K}$, while the first equality follows because $-P_S(0) = 0 - P_S(0) = \normE{P_S(0)} \zeta$, and the last inequality follows because $\dist{S, K} \leq \dist{\sIn{s}{K}, K}$ by optimality assumption. Now, \eqref{coneDistToInnerSmoothing} implies $\normE{P_S(0) - \zeta - 0} = \normE{\cCenter{K}} = \normE{P_{\core{K}}(0) - 0}$. Therefore $P_S(0) - \zeta = x_K$ by uniqueness of orthogonal projections. This proves that $B(\cCenter{K}, 1) \subseteq S$, thereby concluding the proof.
\end{proof}

From Lemmas~\ref{lem: bigCapConeInnerSmoothing}~and~\ref{lem: lowCapConeInnerSmoothing}, we immediately get the necessity of the condition $\sIn{s}{K} \subseteq S \subseteq \sIn{S}{K}$ on optimal smoothings. For sufficiency, note that if $S$ is a closed, convex inner $1$-smoothing that satisfies $\sIn{s}{K} \subseteq S \subseteq \sIn{S}{K}$, then 
\begin{align*}
    \cGap{K} \leq \dist{\sIn{S}{K}, K} \leq \dist{S, K} \leq \dist{\sIn{s}{K}, K} \leq \cGap{K}
\end{align*}
where the first inequality follows by Lemma~\ref{lem: bigCapConeInnerSmoothing}, the second and third inequalities hold because $\dist{\cdot, K}$ is monotone decreasing on subsets of $K$ due to $\dist{\cdot, K} = \onedist{K}{\cdot}$, while the last inequality holds by Lemma~\ref{lem: lowCapConeInnerSmoothing}. Thus $\dist{S, K} = \dist{\sIn{S}{K}, K}$, hence $S$ is optimal as Lemma~\ref{lem: bigCapConeInnerSmoothing} ensures $\sIn{S}{K}$ is optimal. This proves the second statement of the theorem.

The first statement of Theorem~\ref{thm: optimalConeInnerSmoothing} also follows immediately from Lemmas~\ref{lem: bigCapConeInnerSmoothing}~and~\ref{lem: lowCapConeInnerSmoothing}. Note $K$ is inner $\lambda$-smoothable for all $\lambda \geq \cGap{K}$ because the set $\scaleOp{\sIn{s}{K}}{1/\beta}$ is an inner $(\cGap{K}, \beta)$-smoothing, by Lemmas~\ref{lem: lowCapConeInnerSmoothing} and~\ref{lem: scalingOperation}. On the other hand, if $K$ is inner $\lambda$-smoothable, then $\lambda \geq \cGap{K}$ as any inner $(\lambda, 1)$-smoothing $S$ of $K$ has 
$\cGap{K} \leq \dist{S, K} \leq \lambda$ where the first inequality follows from Lemma~\ref{lem: bigCapConeInnerSmoothing}.

\subsubsection{Proof of Theorems~\ref{thm: optimalSigmaSmoothing} and~\ref{thm: optimalSigmaOuterSmoothing}}
Without loss of generality, we assume $\beta = 1$. Two key claims are needed for these proofs: for any convex, $1$-smooth function $f$ and any $r \geq 0$,
\begin{enumerate}[label=(\Alph*)]
    \item If $f \geq \sigma$, then $f - r$ is $1$-smooth and $\dist{f - r, \sigma} \leq \max\set{r, (\dist{f, \sigma} - r)_+}$.

    \item If $\dist{f, \sigma} \leq r$, then $f = f^{\mathrm{in}} - r$ where $f^{\mathrm{in}}$ is an inner $1$-smoothing  with 
    \[
    \dist{f^{\mathrm{in}}, \sigma} \leq \dist{(f - \sigma)_+, 0} + r .
    \]
\end{enumerate}
Before proving these statements, we show that they immediately yield the theorems. Since $f^{\mathrm{in}} \geq \sigma$ and $\dist{f^{\mathrm{in}}, \sigma} \leq \cGap{\sigma}$ for any optimal inner $1$-smoothing by Lemma~\ref{lem: lowCapSigmaInnerSmoothing}, (A) ensures that $\sigma$ is $\max\set{r, (\cGap{\sigma} - r)_+}$-smoothable with the smoothing $\sIn{f}{\sigma} - r$. Further, for $r \geq \cGap{\sigma}$, this smoothing is an outer smoothing. Now, by (B), for any $1$-smoothing $f$, we have $f = f^{\mathrm{in}} - \dist{f, \sigma}$ and 
\begin{align*}
    \cGap{\sigma} \leq \dist{f^{\mathrm{in}}, \sigma} \leq \dist{(f - \sigma)_+, 0} + \dist{f, \sigma} \leq 2 \cdot \dist{f, \sigma}
\end{align*}
where the first inequality follows by Lemma~\ref{lem: bigCapSigmaInnerSmoothing}, the second holds by (B), while the third holds because $\dist{f, \sigma} = \max\set{\dist{(f - \sigma)_+, 0}, \dist{(\sigma - f)_+, 0}}$. Similarly, noting that $(f - \sigma)_+ = 0$ if $f$ is an outer smoothing, (B) ensures $\dist{f, \sigma} \geq \cGap{\sigma}$ for any outer $1$-smoothing $f$. Since $\max\set{r, (\cGap{\sigma} - r)_+} = r$ for $r \in \set{\frac{\cGap{\sigma}}{2}, \cGap{\sigma}}$, we have therefore shown that with $r = \frac{\cGap{\sigma}}{2}$ and $r = \cGap{\sigma}$,  the transformation $f^{\mathrm{in}} \mapsto f^{\mathrm{in}} - r$ is a surjective map from optimal inner $1$-smoothings to optimal general and outer $1$-smoothings, respectively. Combining this observation with Theorem~\ref{thm: optimalSigmaInnerSmoothing} and Equations~\eqref{eq: reformulationOfOptFunctions} proves the if and only if characterization of the optimal general and outer smoothings.

We complete the proof by showing that (A) and (B) are true. For (A), clearly $f - r$ is as smooth as $f$. Further, if $f \geq \sigma$ then
\begin{align*}
    \dist{f - r, \sigma} 
    = \sup_{x \in \varspace} \max\set{r + \sigma(x) - f(x), f(x) - \sigma(x) - r} 
    & \leq \max\set{r, (\dist{f, \sigma} - r)_+}
\end{align*}
where the inequality uses that $f(x) \geq \sigma(x)$ by assumption and $\dist{f, \sigma} \geq f(x) - \sigma(x)$ for all $x \in \varspace$. For (B), if $f$ is $1$-smooth and $\dist{f, \sigma} \leq r$, then clearly $f^{\mathrm{in}} = f + r$ is $1$-smooth and $f^{\mathrm{in}} \geq \sigma$. Further
\begin{align*}
    \dist{f^{\mathrm{in}}, \sigma} 
    = \sup_{x \in \varspace} \left(f(x) + r - \sigma(x)\right)_+
    \leq \sup_{x \in \varspace} \left(f(x) - \sigma(x)\right)_+ + r 
    &= \dist{(f - \sigma)_+, 0} + r
\end{align*}
where the first equality holds because $f^{\mathrm{in}} = f + r \geq \sigma$ while inequality uses that $r \geq 0$.

\subsubsection{Proof of Theorems~\ref{thm: optimalConeSmoothing} and~\ref{thm: optimalConeOuterSmoothing}}
The proof mirrors the proof of Theorems~\ref{thm: optimalSigmaSmoothing}~and~\ref{thm: optimalSigmaOuterSmoothing}. As in that proof, we assume $\beta = 1$ with no loss in generality. We then show that the set transformation $S \mapsto\scaleOp{S + B(0, r)}{1/(1+r)}$ (which is analogous to $f \mapsto f - r$) is a surjective map from optimal inner $1$-smoothings and to general and outer optimal $1$-smoothings when $r = \frac{\cGap{K}}{2}$ and $r = \cGap{K}$, respectively. The key observations are again that for any closed, convex, $1$-smooth set $S$ and any $r \geq 0$,
\begin{enumerate}[label=(\Alph*)]
    \item If $S \subseteq K$, then $\scaleOp{S + B\left(0, r\right)}{1/(1 + r)}$ is $1$-smooth and 
    \[
    \dist{\scaleOp{S + B\left(0, r\right)}{1/(1 + r)}, K} \leq \frac{\max\set{r, \left(\dist{S, K} - r\right)_+}}{1 + r}.\]

    \item If $\dist{S, K} \leq \frac{r}{1 + r}$, then $S = \scaleOp{S^{\mathrm{in}} + B(0, r)}{1/(1+r)}$, where $S^{\mathrm{in}}$ is an inner $1$-smoothing of $K$ with
    \[
    \dist{S^{\mathrm{in}}, K} \leq (1+r)\onedist{K}{S} + r .
    \]
\end{enumerate}
Proving that these statements are true is sufficient, for entirely symmetric arguments as in the previous proof of Theorems~\ref{thm: optimalSigmaSmoothing}~and~\ref{thm: optimalSigmaOuterSmoothing}: smoothability claims follows from (A) by transforming an optimal inner $1$-smoothing using the appropriate choice of $r \in \set{\frac{\cGap{K}}{2}, \cGap{K}}$, while the if and only if characterizations follow because of Theorem~\ref{thm: optimalSigmaInnerSmoothing}, Equations~\eqref{eq: reformulationOfOptSets}, and the fact that (A) and (B) together guarantee that $S \mapsto \scaleOp{S + B(0, r)}{1/(1+r)}$ is a surjective map from optimal inner $1$-smoothings to optimal general (with $r = \frac{\cGap{K}}{2})$ and outer (with $r = \cGap{K}$) $1$-smoothings. 

For (A), note that $S + B\left(0, r\right)$ is $(1 + r)^{-1}$-smooth by \cite[Proposition 3]{liu2024gauges}. Therefore, by Lemma~\ref{lem: scalingOperation}, $\scaleOp{S + B(0, r)}{1/(1+r)}$ is $1$-smooth. Further
\begin{align*}
    \dist{S + B\left(0, r\right), K} 
    &= \max\set{\onedist{S+B\left(0, r\right)}{K}, \onedist{K}{S+B\left(0, r\right)}} \\ 
    &= \max\set{\onedist{S+B\left(0, r\right)}{K}, \left(\dist{S, K} - r\right)_+} \\ 
    & \leq \max\set{r, \left(\dist{S, K} - r\right)_+}
\end{align*}
where the second equality holds by Lemma~\ref{lem: distShrinksWithBallAddition} and the fact that $S \subseteq K$ ensures $\onedist{K}{S} = \dist{S, K}$; the inequality follows because $\onedist{S + B_r}{K} \leq \onedist{K + B_r}{K} = r$ due to $S \subseteq K$. Lemma~\ref{lem: scalingOperation} then ensures $\dist{\scaleOp{S + B(0, r)}{1/(1 + r)}, K} \leq \frac{\max\set{r, \left(\dist{S, K} - r\right)_+}}{1 + r}$ .

We now prove (B). Suppose $\dist{S, K} \leq \frac{r}{1 + r}$. Then Lemma~\ref{lem: scalingOperation} ensures $(1+r)S$ is $(1 + r)^{-1}$-smooth and $\dist{(1+r)S, K} \leq r$. Therefore, there exists a closed, convex $C$ such that $(1+r)S = C + B(0, 1 + r)$, by \cite[Proposition 3]{liu2024gauges}. Let $S^{\mathrm{in}} = C + B(0, 1)$. Clearly, $(1+r)S = S^{\mathrm{in}} + B(0, r)$. Further, $S^{\mathrm{in}}$ is $1$-smooth by \cite[Proposition 3]{liu2024gauges}. Note that $S^{\mathrm{in}} \subseteq K$ holds by the contrapositive since every $x \notin K$ has 
\begin{align*}
    \onedist{x + B\left(0,r\right)}{K} 
    &\geq \normE{x + r\frac{x - P_K(x)}{\normE{x - P_K(x)}} - P_K\left(x + r\frac{x - P_K(x)}{\normE{x - P_K(x)}}\right)} \\
    & = r + \normE{x - P_K(x)} \\
    & > \onedist{S^{\mathrm{in}} + B(0, r)}{K}
\end{align*}
where the equality follows because $P_K\left(x + r\frac{x - P_K(x)}{\normE{x - P_K(x)}}\right) = P_K(x)$. The claimed distance bound holds because
\begin{align*}
    \dist{S^{\mathrm{in}}, K} = \onedist{K}{S^{\mathrm{in}}} - r + r \leq \left(\onedist{K}{S^{\mathrm{in}}} - r\right)_+ + r &= \onedist{K}{(1 + r)S} + r \\
    &= (1 + r)\onedist{K}{S} + r
\end{align*}
where the first equality follows from $S^{\mathrm{in}} \subseteq K$, the inequality because $\alpha \leq (\alpha)_+$ for all $\alpha \in \RR$, the second equality follows by Lemma~\ref{lem: distShrinksWithBallAddition}, while the last equality holds because $K$ is invariant to positive rescalings.
    \section{Applications: Nonsmooth Functions and Sets}\label{sec: conicSections}
Our theory can be used to handle nonsmoothness in more general settings. In this section, we demonstrate how this can be done for nonsmooth composite functions and nonsmooth sets with a nonempty interior.

\subsection{Beyond Sublinear Functions: Application to Amenable Functions} \label{subsec:amenable}
We consider the smoothing of convex functions $g$ of the form $g = \sigma \circ G$,
under the assumption that 
\begin{equation}\label{eq: amenableFunctions}
    \norm{G(x) - G(y)}_{\varspace^\prime} \leq M\normE{x - y} \quad \text{and} \quad \norm{J(x) - J(y)}_{\varspace \to \varspace^\prime} \leq L\normE{x - y} \quad \text{for all } x, y \in \varspace.
\end{equation}
Here $J(\cdot)$ denotes the Jacobian of $G: \varspace \to \varspace^\prime$ and $\norm{\cdot}_{\varspace \to \varspace^\prime}$ is the operator norm. Nonsmooth functions of this type have been studied widely in optimization \cite{Burke1985, Nesterov2013, Lewis2016} and analysis \cite{rockafellar2009, shapiro2003}. Our class of functions is a special subclass of amenable functions of Rockafellar and Wets \cite[Definition 10.23]{rockafellar2009}, where $\sigma$ is not assumed to be sublinear or finite everywhere and $G$ is only required to have a continuous Jacobian. Our class is also closely related to decomposable functions of Shapiro (see \cite[Definition 1.1]{shapiro2003}).

This class includes many nonsmooth functions that arise naturally in optimization; for example, minimizing finite maximums $\max\{G_1(x), \dots, G_n(x)\}$, maximum eigenvalue optimization $\lambda_{\max} (G(x))$, and nonlinear regression with an arbitrary norm $\norm{G(x)}$.

Using our theory, amenable functions satisfying \eqref{eq: amenableFunctions} can be smoothed by replacing the sublinear component $\sigma$ with any smoothing. Doing so optimally gives the following guarantee on the smoothability of such functions.
\begin{theorem}\label{thm: smoothnessOfComposition}
    Consider any function $g = \sigma \circ G$ where $\sigma$ is sublinear and $M_\sigma$-Lipschitz and $G:\varspace \to \varspace^\prime$ satisfies \eqref{eq: amenableFunctions}. Then $g$ is $(M^2w_\sigma/2, M_\sigma L)$-smoothable, given by the $\beta$-smoothing
    $$x\mapsto \frac{M^2}{\beta - M_\sigma L} \sGen{f}{\sigma}\left(\frac{\beta - M_\sigma L}{M^2}G(x)\right) \ . $$
\end{theorem}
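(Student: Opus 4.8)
The plan is to verify that, for each fixed $\beta > M_\sigma L$, the proposed function is a convex, $\beta$-smooth approximation of $g$ within the required tolerance. Setting $\eta := M^2/(\beta - M_\sigma L) > 0$, the candidate is exactly $f = \scaleOp{\sGen{f}{\sigma}}{\eta} \circ G$, so the whole argument reduces to combining the known structure of the optimal $1$-smoothing $\sGen{f}{\sigma}$ from Theorem~\ref{thm: optimalSigmaSmoothing} with the scaling identities of Lemma~\ref{lem: scalingOperation} and the chain rule. I would first record three facts about $\sGen{f}{\sigma}$: it is convex and $1$-smooth; it is $M_\sigma$-Lipschitz with every gradient lying in $\partial\sigma(0)$; and $\dist{\sGen{f}{\sigma},\sigma} = w_\sigma/2$. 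The gradient containment is the key enabling fact: since $\sGen{f}{\sigma}$ is, up to an additive constant, the Moreau envelope $\infConv{(r_\sigma + \sigma(\cdot - \cCenter{\sigma}))}{\frac{1}{2}\normE{\cdot}^2}$, its proximal optimality condition forces $\nabla\sGen{f}{\sigma}(x) \in \partial\sigma(y^\ast - \cCenter{\sigma}) \subseteq \partial\sigma(0)$, where the last inclusion holds because subgradients of a sublinear function at any point lie in $\partial\sigma(0)$. The Lipschitz bound is then immediate from $\partial\sigma(0) \subseteq B(0,M_\sigma)$.

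For the distance bound, positive homogeneity of $\sigma$ gives $\sigma(G(x)) = \scaleOp{\sigma}{\eta}(G(x))$, so
\[
\dist{f,g} = \sup_{x}\abs{\scaleOp{\sGen{f}{\sigma}}{\eta}(G(x)) - \scaleOp{\sigma}{\eta}(G(x))} \le \dist{\scaleOp{\sGen{f}{\sigma}}{\eta},\sigma} = \eta\,\dist{\sGen{f}{\sigma},\sigma} = \frac{M^2}{\beta - M_\sigma L}\cdot\frac{w_\sigma}{2},
\]
where the inequality merely enlarges the supremum from the range of $G$ to all of $\varspace^\prime$ and the middle equality is Lemma~\ref{lem: scalingOperation}. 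This is precisely the tolerance $\tfrac{M^2 w_\sigma/2}{\beta - M_\sigma L}$ demanded by Definition~\ref{def: funcSmoothabilityDefinition} with $(\lambda,\Delta) = (M^2 w_\sigma/2,\, M_\sigma L)$.

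For $\beta$-smoothness I would differentiate through the composition, writing $\nabla f(x) = J(x)^\ast \nabla\scaleOp{\sGen{f}{\sigma}}{\eta}(G(x))$, and split $\nabla f(x) - \nabla f(y)$ into two pieces: one bounded using the $1/\eta$-smoothness of $\scaleOp{\sGen{f}{\sigma}}{\eta}$ (Lemma~\ref{lem: scalingOperation}) together with the $M$-Lipschitzness of $G$, and one bounded using the $L$-Lipschitzness of $J(\cdot)$ times the $M_\sigma$-Lipschitz bound on $\scaleOp{\sGen{f}{\sigma}}{\eta}$. This yields the smoothness constant $\tfrac{1}{\eta}M^2 + M_\sigma L = (\beta - M_\sigma L) + M_\sigma L = \beta$, as required. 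This step is routine given the recorded Lipschitz and smoothness properties.

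The main obstacle is the convexity of $f$: unlike the distance and smoothness estimates, it does not follow from generic composition rules, because composing the convex function $\scaleOp{\sGen{f}{\sigma}}{\eta}$ with the nonlinear map $G$ need not preserve convexity. The way through is a support-function representation. Because every gradient of $\sGen{f}{\sigma}$ lies in the closed convex set $\partial\sigma(0)$, its Fenchel conjugate is supported on $\partial\sigma(0)$, giving $\scaleOp{\sGen{f}{\sigma}}{\eta}(z) = \sup_{\zeta\in\partial\sigma(0)}\left[\ip{\zeta, z} - \eta\,(\sGen{f}{\sigma})^\ast(\zeta)\right]$ and hence $f(x) = \sup_{\zeta\in\partial\sigma(0)}\left[\ip{\zeta, G(x)} - \eta\,(\sGen{f}{\sigma})^\ast(\zeta)\right]$. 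This exhibits $f$ as a supremum of the maps $x \mapsto \ip{\zeta, G(x)}$, each of which is convex under the structural hypothesis that renders $g = \sup_{\zeta\in\partial\sigma(0)}\ip{\zeta, G(\cdot)}$ convex in the first place. Since a pointwise supremum of convex functions is convex, $f$ is convex, and the convexity, smoothness, and distance estimates together certify $f$ as the claimed $\beta$-smoothing.
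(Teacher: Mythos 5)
Your distance and smoothness arguments coincide with the paper's own proof. The paper establishes exactly your two estimates in the same way: for any $\beta'$-smoothing $f_\sigma$ of $\sigma$, the bound $\dist{f_\sigma\circ G, g}\leq \dist{\sigma, f_\sigma}$ follows by enlarging the supremum from the range of $G$, and the chain-rule split of $\nabla f(x)-\nabla f(y)$ into a Jacobian-difference term and a gradient-difference term yields the constant $M_\sigma L + M^2\beta'$, after which one instantiates $\beta' = (\beta - M_\sigma L)/M^2$. The only local difference is how the gradient bound $\normE{\nabla f_\sigma(G(x))}\leq M_\sigma$ is obtained: you derive $\nabla \sGen{f}{\sigma}(x)\in\partial\sigma(0)$ from the prox optimality condition of the Moreau envelope (valid, since $\partial\sigma(y)\subseteq\partial\sigma(0)$ for sublinear $\sigma$), whereas the paper derives it for an \emph{arbitrary} smoothing $f_\sigma$ from $\dist{\sigma,f_\sigma}<\infty$ via the horizon-cone identity $\epi\sigma=(\epi f_\sigma)^\infty$ of Lemma~\ref{lem: distToConeFixesHorizon}. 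Both routes are sound; the paper's is marginally more general because it does not use the explicit Moreau-envelope structure of $\sGen{f}{\sigma}$.

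The genuine problem is your convexity step. From the (correct) representation $f(x)=\sup_{\zeta\in\partial\sigma(0)}\left[\ip{\zeta,G(x)}-\eta\,(\sGen{f}{\sigma})^*(\zeta)\right]$ you conclude that each map $x\mapsto\ip{\zeta,G(x)}$ is convex ``under the structural hypothesis that renders $g=\sup_{\zeta\in\partial\sigma(0)}\ip{\zeta,G(\cdot)}$ convex.'' That inference is invalid: a pointwise supremum of functions can be convex without any individual member being convex. Concretely, take $\varspace'=\RR^2$, $\sigma(z)=\max\{z_1,z_2\}$, and $G(x)=(h(x),-h(x))$ with $h(x)=\sqrt{1+x^2}-1$; then $G$ satisfies \eqref{eq: amenableFunctions}, $g=\max\{h,-h\}=h$ is convex, yet $\ip{e_2,G(\cdot)}=-h$ is concave while $e_2\in\partial\sigma(0)$, so your supremum contains nonconvex members and the argument collapses. (In this example the composition $f_\sigma\circ G$ happens to be convex, but showing that requires a different argument, e.g.\ monotonicity of $f_\sigma$ along the relevant directions, not the one you give.) You did correctly identify a real obligation --- Definition~\ref{def: funcSmoothabilityDefinition} asks for a \emph{convex} $\beta$-smooth $f$, and composing a convex function with a nonlinear $G$ need not preserve convexity --- but note that the paper's own proof does not verify convexity of $f_\sigma\circ G$ either; it establishes only the smoothness and distance bounds. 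So to match the paper your first three paragraphs suffice, and the final convexity paragraph should be deleted or replaced, since as written it rests on a false implication.
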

\begin{proof}
    First, we observe that for any $\beta'$-smoothing $f_\sigma$ of $\sigma$, $f=f_\sigma \circ G$ is a $M_{\sigma}L + M^2\beta'$-smooth approximation to $g$ with $\dist{f_\sigma \circ G, g} \leq \dist{\sigma, f_\sigma}$. To see this, note that
    \begin{align*}
        \dist{f_\sigma \circ G, g} = \sup_{x \in \varspace}\abs{\sigma(G(x)) - f_\sigma(G(x))} \leq \dist{\sigma, f_\sigma} .
    \end{align*}
    Letting $J(\cdot)^T$ denote the adjoint operator of $J(\cdot)$, which is also $L$-Lipschitz continuous, we have
    \begin{align*}
        \normE{\nabla f(x) - \nabla f(y)} &= \normE{J(x)^T \nabla f_\sigma(G(x)) - J(y)^T\nabla f_\sigma(G(y))} \\
        & = \normE{[J(x)^T - J(y)^T]\nabla f_\sigma(G(x)) + J(y)^T[\nabla f_\sigma(G(x))- \nabla f_\sigma(G(y))]} \\
        & \leq \norm{\nabla f_\sigma(G(x))}_{\varspace^\prime}\norm{J(x)^T - J(y)^T}_{\varspace^\prime \to \varspace} + \norm{J(y)^T}_{\varspace^\prime \to \varspace}\norm{\nabla f_\sigma(G(x))- \nabla f_\sigma(G(y))}_{\varspace^\prime}\\
        & \leq \left(M_\sigma L + M^2\beta'\right)\normE{x - y}
    \end{align*}
    where the first equality follows by the chain rule, the first inequality uses the triangle inequality and the definition of operator norm while the last inequality follows because that $F, J(\cdot)^T$ and $\nabla f_\sigma$ are Lipschitz continuous. In particular, the $M$-Lipschitz continuity of $G$ ensures that $\norm{J(y)^T}_{\varspace^\prime \to \varspace} \leq M$. Moreover, the $M_\sigma$-Lipschitz continuity of $\sigma$ ensures that $\normE{\nabla f_\sigma(G(x))} \leq M_\sigma$ since $\dist{\sigma, f_\sigma} < \infty$ implies $\epi \sigma = (\epi f_\sigma)^\infty$ by Lemma~\ref{lem: distToConeFixesHorizon} and consequently  $\nabla f_\sigma(z) \in \partial \sigma(0)$ for all $z \in \varspace'$.

    From this bound, for any target smoothness $\beta > M_\sigma L$, consider selecting $f_\sigma$ as an optimal $\beta' = (\beta - M_\sigma L)/M^2$-smoothing of $\sigma$. This choice suffices to yield an $f=f_\sigma\circ G$ proving the claimed smoothability of $g=\sigma \circ G$.
\end{proof}
\begin{remark}
    By replacing $\sGen{f}{\sigma}$ with $\sIn{f}{\sigma}$, the same arguments can be used to show that $\sigma \circ G$ is inner $(M^2\cGap{\sigma}, M_{\sigma}L)$-smoothable. Similarly, $\sOut{f}{\sigma}$ can be used to show that $\sigma \circ G$ is outer $(M^2\cGap{\sigma}, M_{\sigma}L)$-smoothable.
\end{remark}

\subsubsection{Example: Finite Maximums and Improving LogSumExp Smoothing}
An important application where smoothing enables accelerated convergence theory is the minimization of a finite maximum
$$ \min_x \max \{G_1(x), \dots, G_n(x)\} \ . $$

Supposing each $G_i$ is convex and that $G(x) = (G_1(x), \dots, G_n(x))$ is $M$-Lipschitz, a subgradient method could be applied, giving a convergence guarantee of the form $M^2\normE{x_0 - x_\star}^2/\varepsilon^2$, where $x_0$ denotes the initial iterate and $x_\star$ is an optimal solution. Beck and Teboulle~\cite{Beck2012} considered replacing $\sigma(x) = \max\{y_1, \dots, y_n\}$ above by the smoothing $\eta f^{\mathrm{exp}}(G(x)/\eta)$ for small $\eta > 0$, where
$$
f^{\mathrm{exp}}(y) = \log\left(\sum_{i=1}^n \exp(y_i)\right) \ .
$$
Note that $f^{\mathrm{exp}}$ is $1$-smooth and $\dist{\sigma, f^{\mathrm{exp}}} = \log(n)$. Therefore, if the Jacobian of $G$ is $L$-Lipschitz, Lemma~\ref{lem: scalingOperation} and Theorem~\ref{thm: smoothnessOfComposition} ensure that $\eta f^{\mathrm{exp}}(G(x)/\eta)$ is $L + M^2/\eta$-smooth with $\max_{x \in \varspace}|\sigma(G(x)) - \eta f^{\mathrm{exp}}(G(x)/\eta)| \leq \eta \log(n)$. By choosing $\eta = \varepsilon / (2\log(n))$ and minimizing the smoothed objective to accuracy $\varepsilon/2$ using an accelerated gradient method (see \cite{nesterov1983method}), this yields an improved convergence guarantee of
$$
\sqrt{2L\varepsilon + 4\log(n)M^2}\cdot\frac{\norm{x_0 - x_\star}_2}{\varepsilon} \ .
$$

Note $f^{\mathrm{exp}}$ is not the optimal inner $1$-smoothing of $\sigma$. Rather the optimal inner smoothing is $\sIn{f}{\sigma} = \sGen{f}{\sigma}  + \frac{1}{4}(1 - \frac{1}{n})$ where $\sGen{f}{\sigma}$ is given by~\eqref{eq: maxFuncOptSmoothing}. Using this smoothing and applying an accelerated gradient method to minimize $\varepsilon \sIn{f}{\sigma}(G(x)/\varepsilon)$ to accuracy $\varepsilon /2$ gives an overall convergence guarantee of
$$
\sqrt{2L\varepsilon + 2M^2}\cdot\frac{\norm{x_0 - x_\star}_2}{\varepsilon} \ .
$$
This is an improvement by a factor of $1/\sqrt{\log(n)}$ in iteration complexity of prior works using the $f^{\mathrm{exp}}$ smoothing. In terms of per-iteration computational complexity, these methods are comparable: At each step, both methods must compute a Jacobian of $G$. Although computing the optimal smoothing~\eqref{eq: maxFuncOptSmoothing} and its gradient is somewhat more complex than those of the logSumExp, it can still be done in (expected) linear time in $n$~\cite{Condat2016}. %incurring an additive cost of $O(n\log(n))$ comparison operations. Provided $d = \Omega(\log(n))$, this additional cost is asymptotically negligible compared to the cost of computing an $n\times d$ Jacobian matrix. 

We note that the above improvement is not universal. This is because $f^{\mathrm{exp}}$ is also $1$-smooth with respect to infinity norm, i.e $\norm{\nabla f^{\mathrm{exp}}(x) - f^{\mathrm{exp}}(y)}_1 \leq \norm{x - y}_{\infty}$. Therefore, in calculating the smoothness constant of $f^{\mathrm{exp}} \circ G$, one could instead use the $\tilde{M}$-Lipschitz and $\tilde{L}$-smooth constants under the infinity norm on $\RR^n$. Recall, our smoothings are only optimal with respect to the two-norm. Since $\norm{\cdot}_\infty \leq \norm{\cdot}_2,$ it follows that $\tilde{M} \leq M$ and $\tilde{L} \leq L$. Using this potentially more refined smoothness constant for $f^{\mathrm{exp}}$, our theory provides an improved rate whenever $M/\tilde{M} \leq \sqrt{2\log(n)}$.
%$M/\tilde{M} \ll \sqrt{2\log(n)}$ and $\sup_{x}\norm{J(x)}_{2 \to \infty}/\norm{J(x)}_{2 \to 2} \ll \log(n)$. 

To attain a universal improvement on the $f^{\mathrm{exp}}$ smoothing, new theory is needed for optimal smoothings under general norms and, in particular, the infinity norm. We leave open the question of designing optimal smoothings under norms that do not have an inner product.

\subsection{Beyond Convex Cones: Application to General Convex Sets}\label{subsec: coneApplication}
It is well-known that for any convex set $C \subseteq \varspace$, there exists a cone $K \subseteq \varspace^\prime \defeq \varspace \times \RR$ and $x_0 \in \varspace$ such that $C = \set{x \in \varspace \mid (x - x_0, 1) \in K}$. Therefore, by smoothing $K$ optimally, our theory can be used to approximate generic convex sets $C$. The following Theorem~\ref{thm: affineInSmoothableConeDescriptionOfConvexSets} shows that $K$ can be chosen to be $\left(1 - \frac{R}{\sqrt{1 + R^2}}\right)$-smoothable whenever $B(x_0, R) \subseteq C$. Doing so, our optimal smoothings for $K$ strictly improve upon the trivial $1$-smoothing of $C+B(0,1/\beta)$, at the cost of growing the problem dimension by one. These smoothings may be useful in optimization, as recent works \cite{liu2024gauges,luner2024performanceestimationsmoothstrongly,samakhoana2024scalableprojectionfreeoptimizationmethods} have proposed algorithms that benefit from the smoothness of constraints.
\begin{theorem}\label{thm: affineInSmoothableConeDescriptionOfConvexSets}
    Let $C \subseteq \varspace$ be a closed, convex set with $B(x_0, R) \subseteq C$ for some $R > 0$. Then 
    \[
    C = \set{x\in \varspace \mid (x - x_0, 1) \in K}
    \]
    where $K$ is the closure of the cone $\set{(x,r) \in \varspace \times \RR \mid r > 0, x/r \in C - x_0}$. Further, $K$ is closed, convex, and outer $\left(1 - \frac{R}{\sqrt{1 + R^2}}\right)$-smoothable.
\end{theorem}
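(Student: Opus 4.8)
The plan is to recognize $K$ as the homogenization of the translated set $\tilde{C} \defeq C - x_0$, verify the stated representation and the cone axioms by hand, and then control the conic width $\cGap{K}$ of $K$ through a monotone comparison against the ``ice-cream cone'' generated by the inscribed ball $B(0,R)$. Outer smoothability then drops out of Theorem~\ref{thm: optimalConeOuterSmoothing}, which asserts that $K$ is outer $\lambda$-smoothable exactly when $\lambda \geq \cGap{K}/(1 + \cGap{K})$; thus it suffices to upper bound $\cGap{K}$.

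First I would handle the representation. Since $B(0,R) \subseteq \tilde{C}$ we have $0 \in \interior \tilde{C}$, and $\tilde{C}$ is closed and convex. The set $\set{(x,r) \mid r > 0,\ x/r \in \tilde{C}}$ is a convex cone: a convex combination of two of its points has last coordinate $r > 0$ and $x/r$ equal to a convex combination of two elements of $\tilde{C}$, hence in $\tilde{C}$. Its closure $K$ is therefore a closed convex cone. For the representation, if $(y,1) \in K$ then a sequence from the open cone converges to it with last coordinate tending to $1 > 0$, so closedness of $\tilde{C}$ forces $y \in \tilde{C}$; conversely $y \in \tilde{C}$ puts $(y,1)$ in the open cone. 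Hence the slice $\set{y \mid (y,1) \in K} = \tilde{C}$, which is precisely $C = \set{x \mid (x - x_0, 1) \in K}$. Nonemptiness of $\interior K$ follows since $(x,r) \mapsto x/r$ is open on $\set{r > 0}$, and (in the only nontrivial case $C \neq \varspace$) separating a point outside $\tilde{C}$ yields a halfspace containing the whole cone, giving $K \neq \varspace \times \RR$.

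Next I would bound the width. Homogenizing $B(0,R) \subseteq \tilde{C}$ gives the inclusion $K_R \defeq \set{(x,r) \mid \normE{x} \leq Rr} \subseteq K$, and monotonicity of the core under inclusion, $\core{K_R} \subseteq \core{K}$, yields $\normE{\cCenter{K}} = \min_{\core{K}} \normE{\cdot} \leq \min_{\core{K_R}} \normE{\cdot} = \normE{\cCenter{K_R}}$, so $\cGap{K} \leq \cGap{K_R}$. To evaluate $\cGap{K_R}$, I would compute the polar $N_{K_R}(0) = \set{(\xi,s) \mid s \leq -R\normE{\xi}}$ and apply Lemma~\ref{lem: polyhedralExpressionOfCore}; rotational symmetry of $K_R$ (together with uniqueness of the minimum-norm point of a convex set) forces $\cCenter{K_R}$ onto the cone axis, and the resulting scalar condition gives $\cCenter{K_R} = (0, \sqrt{1 + R^2}/R)$, whence $\cGap{K_R} = \sqrt{1 + R^2}/R - 1$. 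Since $t \mapsto t/(1+t)$ is increasing, $\cGap{K} \leq \cGap{K_R}$ gives $\frac{\cGap{K}}{1 + \cGap{K}} \leq \frac{\cGap{K_R}}{1 + \cGap{K_R}} = 1 - \frac{1}{\normE{\cCenter{K_R}}} = 1 - \frac{R}{\sqrt{1 + R^2}}$, and Theorem~\ref{thm: optimalConeOuterSmoothing} delivers outer $\left(1 - R/\sqrt{1+R^2}\right)$-smoothability.

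The main obstacle is the width computation of the third step: one must correctly identify the normal cone (equivalently, the polar) of the ice-cream cone and then justify that the minimum-norm element of $\core{K_R}$ lies on the axis rather than merely guessing it. The rotational symmetry of $K_R$, which makes $\core{K_R}$ itself a translated copy of $K_R$, renders this clean, but it should be argued rather than assumed. By contrast, the homogenization representation of the first step is standard and the comparison inclusion of the second step is immediate from $B(0,R) \subseteq \tilde{C}$.
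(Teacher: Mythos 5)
Your proof is correct, and its width bound travels a genuinely different route from the paper's. The paper also reduces, via the outer-smoothability characterization, to showing $\cGap{K} \leq (\sqrt{1+R^2}-R)/R$, but it does so by simply exhibiting the single point $(\bar x,\bar r)=(0,\sqrt{1+R^2}/R)$ and verifying $(\bar x,\bar r)+B(0,1)\subseteq K$ through a direct chain of algebraic implications from $B(0,R)\subseteq C-x_0$ (a chain that, read closely, never uses more of $K$ than your inscribed cone $K_R$), then concluding via $\cGap{K}=\normE{\cCenter{K}}-1\leq \normE{(\bar x,\bar r)}-1$. You instead formalize the comparison: $K_R\defeq\set{(x,r)\mid \normE{x}\leq Rr}\subseteq K$ by homogenizing the inscribed ball, $\core{K_R}\subseteq\core{K}$ by monotonicity of the core under inclusion, and an exact evaluation of $\core{K_R}$ via Lemma~\ref{lem: polyhedralExpressionOfCore}, the polar $N_{K_R}(0)=\set{(\xi,s)\mid s\leq -R\normE{\xi}}$, and the rotational-symmetry/uniqueness-of-projection argument pinning $\cCenter{K_R}$ to the axis. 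What the paper's version buys is brevity: no normal-cone computation and no symmetry argument. What yours buys is structure: the comparison principle is reusable for any cone containing a known model subcone; your computation shows the bound is tight when $C=B(x_0,R)$ (and recovers the second-order-cone row of Table~\ref{tab:classic-cones} at $R=1$); and you supply the representation, closedness, $\interior K\neq\emptyset$, and $K\neq\varspace\times\RR$ details that the paper dismisses as obvious but that are needed to invoke the cone theorems at all. One further point in your favor: you cite Theorem~\ref{thm: optimalConeOuterSmoothing}, whose threshold $\cGap{K}/(1+\cGap{K})$ is the one actually used, whereas the paper's text cites Theorem~\ref{thm: optimalConeSmoothing} while using the outer threshold --- an apparent citation slip that your write-up silently corrects.
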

\begin{proof}
    We only prove smoothability since the other statements are obvious. By Theorem~\ref{thm: optimalConeSmoothing}, it suffices to show that $\frac{\cGap{K}}{1 +\cGap{K}} \leq 1 - \frac{R}{\sqrt{1 + R^2}}$. Let $\varspace^\prime = \varspace \times \RR$. Recalling that $\cGap{K} = \norm{P_{\core{K}}(0)}_{\varspace^\prime} - 1$, and noting that $\frac{\cGap{K}}{1 +\cGap{K}} \leq 1 - \frac{R}{\sqrt{1 + R^2}} \iff \cGap{K} \leq \frac{\sqrt{1 + R^2} - R}{R}$, it suffices to show that there exists $(\bar x, \bar r) \in \core{K}$ such that $\norm{(\bar x, \bar r)}_{\varspace^\prime} - 1\leq \frac{\sqrt{1 + R^2} - R}{R}$. Taking $(\bar x, \bar r) = (0, \sqrt{1 + R^2}/R)$, we have
    \begin{align*}
        \norm{(\bar x, \bar r)}_{\varspace^\prime} - 1 = \frac{\sqrt{1 + R^2}}{R} - 1 = \frac{\sqrt{1 + R^2} - R}{R}.
    \end{align*}
    Further, we have
    \begin{align*}
        (\bar x, \bar r) \in \core{K} 
        &\iff (\bar x, \bar r) + B(0, 1) \subseteq K \\
        &\iff \left[\normE{x}^2 + \left(r - \frac{\sqrt{1 + R^2}}{R}\right)^2 \leq 1  \implies (x, r) \in K\right] \\
        & \impliedby \left[\normE{x}^2 + \left(r - \frac{\sqrt{1 + R^2}}{R}\right)^2 \leq 1 \implies r > 0  \; \text{and} \; \normE{\frac{x}{r}} \leq R\right] \\
        & \impliedby \left[\normE{x}^2 + \left(r - \sqrt{1 + R^{-2}}\right)^2\leq 1 \implies  \normE{x}^2 \leq r^2R^2\right] \\
        & \iff \left[\normE{x}^2 \leq r^2R^2 - \left(r^2R^2 + \left(r - \sqrt{1 + R^{-2}}\right)^2 - 1 \right) \implies \normE{x}^2 \leq r^2R^2\right] \\
        & \impliedby r^2R^2 + \left(r - \sqrt{1 + R^{-2}}\right)^2 - 1 \geq 0 \\
        & \iff (1 + R^2)r^2 - 2r\frac{\sqrt{1 + R^2}}{R} + \frac{1}{R^2} \geq 0 \\
        & \iff \left(r\sqrt{1+R^2} - \frac{1}{R}\right)^2 \geq 0
    \end{align*}
    where the first line follows by definition of $\core{K}$, the third line follows by definition of $K$ and the assumption that $B(0, R) \subseteq C - x_0$, and the fourth because $\normE{x}^2 + \left(r - \sqrt{1 + R^{-2}}\right)^2 \leq 1$ clearly implies $r \geq \sqrt{1 + R^{-2}}-  1 > 0$. Therefore, $(\bar x, \bar r) \in \core{K}$ and the proof is complete.
    
\end{proof}

    \paragraph{Acknowledgements.} This work was supported in part by the Air Force Office of Scientific Research under award number FA9550-23-1-0531. Benjamin Grimmer was additionally supported as a fellow of the Alfred P. Sloan Foundation.

    {\small
    \bibliographystyle{unsrt}
    \bibliography{bibliography}
    }
    \appendix
    %\section{Horizon Cone Invariant to Ball Addition}
% \begin{proposition}\label{prop: horizonConePreservedUnderBallAddition}
%     Let $C$ be a nonempty, closed, convex set and $r \geq 0$. Then $(B_r + C)^{\infty} = C^\infty$.
% \end{proposition}

% \begin{proof}
%     Note that $C^\infty \subseteq (B_r + C)^{\infty}$ because $C \subseteq B_r + C$. Now let $d \in (B_r + C)^{\infty}$. It suffices to show that there exists $c \in C$ such that $c +  d \in C$. For any $c, \bar c \in C$ and $\zeta \in N_C(\bar c)$, we have 
%     \begin{align*}
%         \ip{\zeta, d}
%         & = \ip{\zeta, c + d - P_C(c + d)} + \ip{\zeta, P_C(c + d) - \bar c} + \ip{\zeta, \bar c - c} \\
%         & \leq \normE{\zeta}\normE{c + d - P_C(c + d)} + \ip{\zeta, \bar c - c} \\
%         & \leq r\normE{\zeta} + \ip{\zeta, \bar c - c}
%     \end{align*}
%     where the first inequality is by the Cauchy-Schwartz inequality and normality conditions, while the second follows because $c + d \in B_r + C$. Since $d$ was arbitrary and $(B_r + C)^{\infty}$ is a cone, it follows that $\zeta \in N_{(B_r + C)^{\infty}}(0)$. Therefore, $\ip{\zeta, c + d - \bar c} \leq \ip{\zeta, c - \bar c} \leq 0$. We have thus shown that
%     \begin{align*}
%         c + d \in \set{x \in \varspace \mid \ip{\zeta, x - \bar c} \leq 0, \bar c \in C, \zeta \in N_C(\bar c)}  = C
%     \end{align*}
%     where the equality follows because $C$ is closed and convex.
% \end{proof}

\section{Optimal Smoothing of the Maximum Eigenvalue Function}\label{app:max-eigen}
    Consider $\varspace = \mathbb{S}^{d\times d}$, the space of symmetric $  d\times d$ matrices with the trace inner product and Frobenius norm. Let $\sigma(A) \defeq \lambda_{\max}(A) \defeq \max_{x \in \RR^d, \norm{x}_2 =1}x^TAx$. Note that $\partial \sigma(0) = \mathrm{conv}\{xx^T \mid x \in \RR^d, \norm{x}_2 =1\}$. Therefore, the functional core and center are
    \begin{align*}
        & \price{\sigma}(A) = \max_{\zeta \in \partial \sigma(0)}\ip{\zeta, A} + \frac{1}{2}\normE{\zeta}^2 = \max_{\zeta \in \{xx^T \mid x \in \RR^d, \norm{x}_2 =1\}}\ip{\zeta, A} + \frac{1}{2} = \sigma(A) + \frac{1}{2}, \\
        & \cCenter{\sigma} = \argmin_{A \in \mathbb{S}^{d\times d}}\frac{1}{2} + \sigma(A) + \frac{1}{2}\normE{A}^2 =  - I_d / d, \\
        & r_\sigma = - \frac{1}{d} + \frac{1}{2}
    \end{align*}
    where $I_d$ is the identity matrix for $\RR^d$. This implies $\price{\sigma} = r_\sigma + \sigma(\cdot - \cCenter{\sigma})$ and so $\sigma$ has a unique optimal $1$-smoothing. In particular, it is given by\footnote{This can be verified by directly computing $\infConv{\price{\sigma}}{\frac{1}{2}\normE{\cdot}^2} - \cGap{\sigma}/2 = \infConv{\left(\frac{1}{2} + \sigma\right)}{\frac{1}{2}\normE{\cdot}^2} -(1 - 1/d)/4 $ using Equation~\eqref{proxOfSigma} and noting that $P_{\partial\sigma(0)}(A) = \sum_{i=1}^d(\lambda_i(A)- \alpha)_+ v_iv_i^T$ if $A = \sum_{i=1}^d\lambda_i(A) v_iv_i^T$ is the spectral decomposition of $A$.}
    \begin{equation*}
        \sGen{f}{\lambda_{\max}}(A) = \infConv{\lambda_{\max}}{\frac{1}{2}\normE{\cdot}^2}(A) + \frac{1}{4}\left(1 + \frac{1}{d}\right) = \alpha + \frac{1}{2}\sum_{j = 1}^J(\lambda_j(A) - \alpha)^2 + \frac{1}{4}\left(1 + \frac{1}{d}\right) 
    \end{equation*}
    where $\lambda_1(A), \dots, \lambda_d(A)$ are the eigenvalues of $A$ sorted in descending order, $\alpha =  \frac{1}{J}(\sum_{i = 1}^J\lambda_i(A) - 1)$, and
    $J = \max\set{j \in \{1, \dots, d\} \mid \lambda_j(A) - \frac{1}{j}(\sum_{i = 1}^{j}\lambda_i(A) - 1) > 0}$.
    
    The error of approximation is $\max_{A \in \mathbb{S}^{d\times d}}\abs{\sGen{f}{\lambda_{\max}}(A) - \sigma(A)} = \frac{1}{2}\cGap{\sigma} = \frac{1}{4}(1 -\frac{1}{d})$. More generally, $\frac{1}{\beta}\sGen{f}{\lambda_{\max}}(\beta A)$ gives the optimal $\beta$-smoothing for any $\beta > 0$, with approximation error of $\frac{1}{4}(1 -\frac{1}{d})/\beta$.

\end{document}